\theoremstyle{plain}
\newtheorem{thm}{Theorem}[section]
\newtheorem{lemma}[thm]{Lemma}
\newtheorem{prop}[thm]{Proposition}
\newtheorem{cor}[thm]{Corollary}
\theoremstyle{remark}
\newtheorem{rem}[thm]{Remark}
\theoremstyle{definition}
\newcommand{\dist}{\operatorname{dist}}
\newcommand{\dps}{\displaystyle}
\definecolor{cyan20}{cmyk}{.2,0,0,0}
\newcommand{\tn}[1]{{\color{black}#1}}
\newcommand{\dual}[1]{\left\langle#1\right\rangle}
\newcommand{\vnorm}[1]{|\hspace{-0.3mm}|\hspace{-0.3mm}|#1|\hspace{-0.3mm}|\hspace{-0.3mm}|}
\begin{document}
%
%

\title{A mass-lumping finite element method for radially symmetric solution of a multidimensional semilinear heat equation with blow-up}

\author{
Toru Nakanishi\thanks{Graduate School of Mathematical Sciences, The University of Tokyo, Komaba 3-8-1, Meguro-ku, Tokyo 153-8914, Japan. \textit{E-mail}: \texttt{nakanish@ms.u-tokyo.ac.jp}} 
\and 
Norikazu Saito\thanks{Graduate School of Mathematical Sciences, The University of Tokyo, Komaba 3-8-1, Meguro-ku, Tokyo 153-8914, Japan. \textit{E-mail}: \texttt{norikazu@g.ecc.u-tokyo.ac.jp}}
}


\maketitle


\begin{abstract}
This study presents a new mass-lumping finite element method for computing the radially symmetric solution of a semilinear heat equation in an $N$ dimensional ball \tn{($N\ge 2$)}. We provide two schemes, (ML--1) and (ML--2), and derive \tn{their} error estimates \tn{through} the discrete maximum principle. \tn{In the weighted $L^{2}$ norm,} the convergence of (ML--1) \tn{was} at the optimal order \tn{but} that of (ML--2) \tn{was} only at sub-optimal order. Nevertheless, \tn{scheme} (ML--2) \tn{reproduces a} blow-up of the solution of the original equation. In fact, \tn{in scheme} (ML--2), we \tn{could} accurately \tn{approximate} the blow-up time. \tn{Our theoretical results were validated in} numerical \tn{experiments}.  
\end{abstract}

{\noindent \textbf{Key words:}
finite element method, 
blow-up, 
radially symmetric solution
}

\bigskip

{\noindent \textbf{2010 Mathematics Subject Classification:}
65M60,
35K58,
}

\section{Introduction}
\label{sec:intro}

This paper \tn{applies} the finite element method (FEM) to a semilinear parabolic equation with a singular convection term:  
\begin{subequations}
\label{eq:1}
\begin{align}
&u_{t}=u_{xx}+\frac{N-1}{x}u_{x}+f(u), 
 &&x\in I=(0,1),~ t>0, \label{eq:1a}\\
&u_x (0,t)=u(1,t)=0, &&t>0, \label{eq:1b}\\
&u(x,0)=u^0(x),    &&x\in I. \label{eq:1c}
\end{align}
\end{subequations}
\tn{Here}, 
$u=u(x,t)$, $x\in \overline{I}=[0,1], t\ge 0$ denotes the function to be \tn{found}, $f$ \tn{is} a given locally Lipschitz continuous function\tn{,} and $u^0$ \tn{is} a given continuous function. Throughout this paper, we assume that 
\begin{equation}
\label{eq:N}
\quad \mbox{$N$ is an integer}
\ge 2. 
\end{equation}
\tn{To compute the} blow-up solution of \eqref{eq:1}, \tn{we apply} 
Nakagawa's time-increment control strategy (see \cite{nak76} and Section \ref{sec:7} of the present paper), a powerful technique for \tn{approximating} blow-up \tn{times}.
As recalled below, the standard finite element approximation is \tn{unuseful} for achieving this purpose. 
\tn{We thus} propose a new mass-lumping finite element approximation, prove its convergence, and \tn{apply it to} a blow-up analysis. 

\tn{We first} clarify the motivation of this study. 
In many engineering problems, the \tn{spatial} dimension of a mathematical model is at most three. \tn{Solving} partial 
differential equations (PDEs) \tn{in more than three spatial
dimensions} is usually motivated by 
mathematical interests. \tn{Mathematicians understand that solving} problems in a general setting \tn{can} reveal the hidden natures of PDEs. \tn{One} successful 
result \tn{is} the discovery of Fujita's blow-up exponent for \tn{the} 
semilinear heat equation \tn{of} $U=U(\bm{x},t)$ given as 
\begin{equation}
\label{eq:F}
U_{t}=\Delta U+ f(U)\qquad (\bm{x}\in\mathbb{R}^N,~t>0) ,
\end{equation}
where $N$ and $f(U)$ are \tn{defined} above.  
Assuming $f(U)=U|U|^{\alpha}$ with $\alpha>0$, Fujita showed that any positive solution
blows up in finite time if $1+\alpha< 1+2/N$, \tn{but} a solution remains 
smooth at any time \tn{if the} small initial value \tn{is small and} $1+\alpha>1+2/N$.
\tn{The} quantity $p_c=1+2/N$ is known as Fujita's critical exponent\tn{,} 
and \tn{Eq.}\eqref{eq:F} is called Fujita's equation. 
\tn{Since} Fujita's work, a huge number of studies have been 
devoted to critical phenomena in nonlinear PDEs of several kinds \tn{(see \cite{dl00,lev90,qs19} for details.)} 
\tn{The} knowledge \tn{gained by these studies has been applied to} problems 
\tn{with spatial} dimensions \tn{of} three \tn{or fewer}. 
\tn{However}, many problems related to stochastic 
analysis are formulated \tn{as} higher-dimensional PDEs. 
\tn{These problems have attracted much interest,} but are beyond \tn{the scope }of \tn{the present} study.  

\tn{Non-stationary} problems \tn{in four dimensional space are }difficult \tn{to solve by numerical methods}, 
even \tn{on} modern computers. Consequently, numerical \tn{analyses} 
of \tn{the} blow-up solutions of nonlinear PDEs \tn{have been} restricted to two-dimensional space 
(see \tn{for example}
\cite{alm01,bgr04,bqr05,cgkm16,cho13,cho16,cho18,fgr03,gro06,hmr08,gr01,iho02,it00,nu77,nb08} ). 
Although Nakagawa's time-increment control strategy is applied to various nonlinear PDEs including the nonlinear heat, wave and Schr{\" o}dinger equations, these equations are considered only in the one-space dimension; see \cite{che86,cho10,cho13,cho07,iho02,ss16,ss16b}.
We know two notably exceptions; the one is \cite{nu77} where 
the finite element method to a semilinear heat equation 
in a two dimensional polygonal domain was considered, and the other is \cite{che92} where the finite difference method to the radially symmetric solution of \tn{the} semilinear heat equation in \tn{an} $N$ dimensional ball was studied. 

Following \cite{che92}, \tn{the present paper investigates} radially symmetric solutions \tn{to Eq.\eqref{eq:F}}. \tn{Assuming radial symmetry of the solution and the }given data, the $N$-dimensional equation \tn{reduces to a} one-dimensional equation. 
More specifically, considering \eqref{eq:F} in an $N$-dimensional unit ball $\textup{B}=\{\bm{x}\in\mathbb{R}^N\mid |\bm{x}|_{\mathbb{R}^N}<1\}$ with the homogeneous Dirichlet boundary condition on the boundary and assuming $U$ is expressed as $u(x)=U(\bm{x})$ for $\bm{x}\in \textup{B}$ and $x=|\bm{x}|_{\mathbb{R}^N}$, we came to consider the problem \eqref{eq:1}.  

After \tn{completing} the present work, we learned \tn{that} Cho and Okamoto \cite{co20} \tn{extended the work in} \cite{che92}.   
\tn{The time dimension was discretized by the} semi-implicit Euler method \tn{in \cite{che92}, but Cho and Okamoto \cite{co20} explored the} explicit scheme\tn{, then proved optimal-order convergence \tn{with} Nakagawa's strategy.
Because \tn{their} schemes use special approximations around the origin to maintain some analytical properties of the solution, \tn{they should be performed on a uniform} spatial mesh. \tn{Conversely}, when \tn{seeking} the blow up solution, non-uniform partitions of the space variable \tn{are} useful for examining highly concentrated solutions at the origin. \tn{For this purpose}, we \tn{developed} the FEM scheme. 

FEM analyses of the linear case\tn{, in which} $f(u)=0$ in Eq.\eqref{eq:1}} is replaced by a given function $f(x,t)$, \tn{are not new}. 
Eriksson and Thom\'{e}e \cite{et84} and Thom\'{e}e \cite{tho06} studied the convergence property \tn{of} the elliptic equation, and proposed two schemes: the symmetric scheme, \tn{in which} the optimal-order error \tn{is estimated} in the weighted $L^2$ norm, and the nonsymmetric scheme, \tn{in which} the $L^\infty$ error \tn{is estimated}. 
However, their finite element schemes \tn{are not easily adaptable to the} semilinear equation \eqref{eq:1}, as reported in our earlier study \cite{ns20}. \tn{Our earlier results are} briefly summarized below: 

\begin{itemize}
 \item If $f$ is \emph{globally} Lipschitz continuous, the solution of the symmetric scheme converges to the solution of \eqref{eq:1} in the weighted $L^2$ norm \tn{in} space and in the $L^\infty$ norm \tn{in} time. Moreover, the convergence is at the optimal order (see \tn{Theorem 4.1 in \cite{ns20}}). 
 \item If $f$ is \emph{locally} Lipschitz continuous and $N\le 3$, the same conclusion holds (see \tn{Theorem 4.3 in \cite{ns20}}). However, if $N\ge 4$, the convergence properties are not \tn{guaranteed}. \tn{For this reason, interest in} radially symmetric problems \tn{has diminished}. 
 \item If $f(u)=u|u|^\alpha$ with $\alpha\ge 1$ and the time partition is uniform, 
the solution of the non-symmetric scheme converges to the solution of \eqref{eq:1} in the $L^\infty(0,T;L^\infty(I))$ norm. \tn{Optimal-order convergence holds} up to the \tn{logarithmic} factor (see \tn{Theorem 4.6 in \cite{ns20}}). 
Nakagawa's time-increment control strategy \tn{is difficult to apply in such cases}. 
\end{itemize}

\tn{As the} non-symmetric scheme seems to be \tn{incompatible with} Nakagwa's time-increment control strategy\tn{, we pose} the following question:
Can the restriction $N\le 3$ \tn{be removed from} the symmetric scheme?
\tn{In} fact, this restriction \tn{is imposed by the} inverse inequality \tn{Lemma 4.8 in \cite{ns20}} and \tn{the necessity of finding the} boundedness of the finite element solution (see the proof of \tn{Theorem 4.3 in \cite{ns20}}). 
To surmount this difficulty, the $L^\infty$ estimates for the FEM \tn{can be directly derived using the discrete maximum principle (DMP)}. \tn{As the} DMP is based largely on the nonnegativity of the finite element solution\tn{, the time derivative term should be approximated by the} mass-lumping approximation. 
Unfortunately, we tried \tn{but} failed \tn{to prove} the convergence property of the finite element solution \tn{by this approximation} (see \eqref{eq:lump0} below). Therefore, we propose a new  mass-lumping approximation \eqref{eq:lump} in this paper. Using \tn{the new approximation}, we prove the DMP and \tn{the} convergence property of the finite element solution\tn{, and perform the} blow up analysis for any $N\ge 2$. 

\tn{Our typical results are summarized below. Here, our} schemes are \tn{denoted as}~(ML--1) and (ML--2). 
\begin{itemize}
 \item The solution of (ML--1) is non-negative if $f$ and $u^0$ \tn{satisfy} some conditions (Theorem \ref{prop:1}). 
Furthermore, if the time increment satisfies condition \eqref{eq:tau1}, the solution of (ML--2) is also non-negative (Theorem \ref{prop:1}). \tn{Theorem \ref{th:tau} gives a} useful sufficient condition \tn{of} \eqref{eq:tau1}. 
 \item The
solution of (ML--1) converges to the solution of \eqref{eq:1} in the
weighted $L^2$ norm \tn{in} space and in the $L^\infty$ norm \tn{in} time.
Moreover, the convergence is at the optimal order (Theorem \ref{th:ml1-1}). 
The proof is based on a sub-optimal estimate in the $L^\infty(0,T;L^\infty(I))$ norm (Theorem \ref{th:ml1-2}). 
 \item If condition \eqref{eq:tau1} is satisfied, then the solution of (ML--2) converges to the solution of \eqref{eq:1} in the $L^\infty(0,T;L^\infty(I))$ norm (Theorem \ref{th:ml2-1}). Unfortunately, the order of the convergence is sub-optimal. 
 \item The solution of (ML--2) \tn{reproduces} the blow up property of the solution of \eqref{eq:1} (Theorems \ref{prop:5.6} and \ref{prop:5.7}).  
\end{itemize}

This paper comprises six sections \tn{and} an Appendix. Section \ref{sec:2} presents our finite element schemes and the convergence theorems (\tn{Theorems} \ref{th:ml1-1}--\ref{th:ml2-1}). After \tn{describing our} preliminary results in Section \ref{sec:pre}, we prove \tn{our} convergence theorems in Section \ref{sec:42}. 
\tn{Section \ref{sec:5} reports the results of our} blow-up analysis\tn{, and} 
Section \ref{sec:7} \tn{validates our theoretical results with} numerical examples. 
Appendix \ref{sec:6} \tn{presents the proofs} of some auxiliary results on the eigenvalue problems. 

\section{The schemes and \tn{their} convergence results}
\label{sec:2}

Throughout this paper, $f$ is assumed \tn{as} a locally Lipschitz continuous function of $\mathbb{R}\to\mathbb{R}$. 

\tn{For some arbitrary} $\chi\in \dot{H}^{1}=\{v\in H^{1}(I)\mid v(1)=0\}$, \tn{we multiply} both sides of \eqref{eq:1a} by $x^{N-1}\chi$ and \tn{integrate} by parts over $I$. \tn{We thus} obtain 
\begin{equation}
\label{eq:w1a}
 \int_I x^{N-1} u_t\chi~dx+
 \int_I x^{N-1} u_{x}\chi_{x}~dx=
 \int_I x^{N-1} f(u)\chi~dx.
\end{equation}
Therefore a weak formulation of \eqref{eq:1} is stated as follows. 
For $t>0$, find $u(\cdot,t)\in \dot{H}^{1}$ such that 
\begin{equation}
\label{eq:w1}
(u_t,\chi)+A(u,\chi)=(f(u),\chi)\quad (\forall \chi\in \dot{H}^{1}),
\end{equation}
where 
\begin{equation} 
 \label{eq:4}
 (w,v)=\int_I x^{N-1} wv~dx, \quad 
A(w,v)=\int_I x^{N-1}w_{x}v_{x}~dx. 
\end{equation}

We now introduce the finite element method.  
For a positive integer $m$, we introduce node points 
\[
 0=x_0<x_1<\cdots<x_{j-1}<x_{j}<\cdots<x_{m-1}<x_m=1,
\]
and set $I_{j}=(x_{j-1},x_{j})$ and $h_j=x_j-x_{j-1}$, where $j=1,\ldots,m$. The granularity parameter is defined as $h=\max_{1\le j\le m}h_j$. 
Let $\mathcal{P}_k(J)$ be the set of all polynomials in an interval $J$ of degree $\le k$.
The $\mathcal{P}_1$ finite element space is defined as 
\begin{equation}
\label{eq:2}
S_{h}=\{ v \in H^{1}(I) \mid v\in\mathcal{P}_1(I_j)~(j = 1,\cdots,m),\  v(1)=0\}.
\end{equation}
The standard basis function $\phi_{j}\in S_h$, $j=0,1,\cdots,m$ is defined as 
\[
 \phi_{j}(x_{i})=\delta_{ij},
\]
where $\delta_{ij}$ denotes Kronecker's delta. We note that $S_h\subset \dot{H}^{1}$ and that any function of $\dot{H}^{1}$ is identified with a continuous function. The Lagrange interpolation operator $\Pi_{h}$ of $C(\overline{I})\to S_h$ is defined as $\Pi_{h}{w}=\dps\sum_{j=0}^{m}{w}(x_{j})\phi_{j}$ for ${w}\in C(\overline{I})$, where $C(\overline{I})$ denotes the set of all continuous functions in $\bar{I}$.   

The mass-lumping \tn{approximation of} the weighted $L^2$ norm $(\cdot,\cdot)$ \tn{can be naturally defined as}
\begin{equation}
\label{eq:lump0}
 (w,v)\approx 
w(x_0)v(x_0) \int_{0}^{x_{1/2}}x^{N-1}~dx 
+\sum_{i=1}^{m-1}w(x_i)v(x_i)\int_{x_{i-1/2}}^{x_{i+1/2}}x^{N-1}~dx,
\end{equation}
where $x_{i-1/2}=(x_i+x_{i-1})/2$. 
As mentioned in the Introduction, this standard \tn{formulation} is useless for our purpose. Instead, we \tn{define} 
\begin{equation}
\label{eq:lump} 
\dual{w,v}=\sum_{i=0}^{m-1}w(x_{i})v(x_{i})(1,\phi_{i})\qquad (w,v\in \dot{H}^{1}). 
\end{equation}
\tn{This definition leads to the} following result, which \tn{can be} verified by direct calculation. 

\begin{lemma}
\label{prop:dual}
We have $\dual{1,{w}}=(1,\Pi_{h}{w})$ for any ${w} \in C(\overline{I})$. 
\end{lemma}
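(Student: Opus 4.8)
The plan is to show that both sides collapse to one and the same finite sum, namely $\sum_{i=0}^{m-1} w(x_{i})(1,\phi_{i})$, so that the identity follows by pure linearity without any genuine computation with the weight $x^{N-1}$. The only structural facts needed are that $\Pi_{h}w$ is a nodal expansion in the basis functions $\phi_{i}$, that the weighted inner product $(\cdot,\cdot)$ defined in \eqref{eq:4} is linear in its second argument, and that the constant function $1$ takes the value $1$ at every node.

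First I would rewrite the right-hand side by inserting the definition of the Lagrange interpolant, $\Pi_{h}w=\sum_{j} w(x_{j})\phi_{j}$, and pulling the scalars $w(x_{j})$ out of the integral $(1,\cdot)=\int_{I}x^{N-1}(\cdot)\,dx$; this gives $(1,\Pi_{h}w)=\sum_{j} w(x_{j})(1,\phi_{j})$, i.e. a $w$-weighted combination of the nodal masses $(1,\phi_{j})$. Next I would unfold the left-hand side directly from \eqref{eq:lump}: taking the first slot to be the constant $1$, which satisfies $1=1$ at each $x_{i}$, yields $\dual{1,w}=\sum_{i=0}^{m-1} w(x_{i})(1,\phi_{i})$. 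The two expressions are then manifestly built from the same data, the numbers $(1,\phi_{i})$ and the nodal values $w(x_{i})$, so a term-by-term comparison closes the argument.

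The single point requiring care, which I would flag as the (mild) main obstacle, is the treatment of the boundary node $x_{m}=1$: the sum defining $\dual{1,w}$ stops at $i=m-1$, so I must verify that no stray term from the interpolant survives at $j=m$. This is resolved by observing that $\Pi_{h}w\in S_{h}$, and that $S_{h}$ is spanned by $\phi_{0},\dots,\phi_{m-1}$ since every element of $S_{h}$ vanishes at $x_{m}=1$; hence the expansion of $\Pi_{h}w$ entering $(1,\Pi_{h}w)$ ranges over exactly the indices $0,\dots,m-1$, matching the range in \eqref{eq:lump}. With the summation ranges aligned, the equality $(1,\Pi_{h}w)=\dual{1,w}$ is immediate, which is what the phrase ``verified by direct calculation'' refers to.
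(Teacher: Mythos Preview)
Your proposal is correct and follows exactly the approach the paper indicates: unfold both sides via the definitions of $\dual{\cdot,\cdot}$ and $\Pi_h$, and match the resulting sums $\sum_i w(x_i)(1,\phi_i)$ term by term. Your treatment of the boundary index $j=m$ (using that $\Pi_h w\in S_h$ so the expansion stops at $m-1$) is the only point needing care, and you handle it appropriately; the paper itself gives no details beyond the phrase ``verified by direct calculation.''
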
   


The associated norms with $(\cdot,\cdot)$ and $\dual{\cdot,\cdot}$ are \tn{respectively given by} 
\[
 \|v\|=(v,v)^{1/2}\quad \mbox{and}\quad \vnorm{v}=\dual{v,v}^{1/2}.
\]
\tn{These} norms are equivalent in $S_h$ as mentioned in Lemma \ref{prop:norm}.

\tn{The} time discretization \tn{is non-uniformly partitioned as}
\[
 t_0=0,\qquad t_{n} = \sum_{j=0}^{n-1}\tau_{j}\quad (n\ge 1),
\]
where $\tau_j>0$ denotes the time increment. Furthermore, \tn{we} set 
\[
 \tau=\sup_{j\ge 0}\tau_j.
\]


\tn{In general}, we write $\partial_{\tau_n}u_h^{n+1}=(u_h^{n+1}-u_h^n)/\tau_n$. \\
The finite element schemes \tn{are then stated as follows}.

\smallskip


\noindent \textbf{(ML--1)} Find $u_{h}^{n+1} \in S_{h}$, $n=0,1,\ldots$, such that  
\begin{equation}
\label{eq:3}
\dual{\partial_{\tau_n}u_h^{n+1},\chi} + A(u_{h}^{n+1},\chi)=(f(u_{h}^{n}),\chi)\quad 
(\chi \in S_{h}),
\end{equation} 
where $u_h^0\in S_h$ is assumed to be given. 

\medskip


\noindent \textbf{(ML--2)} Find $u_{h}^{n+1} \in S_{h}$, $n=0,1,\ldots$, such that  
\begin{equation}
\label{eq:8}
\dual{\partial_{\tau_n}u_h^{n+1},\chi} + A(u_{h}^{n},\chi)=\dual{f(u_{h}^{n}),\chi}\quad 
(\chi \in S_{h}).
\end{equation}

\tn{Below, we will show} the optimal order error estimate in the weighted $L^2$ norm for the solution of (ML--1). On the other hand, we are able to show only a sub-optimal order error estimate in the $L^\infty$ norm for the solution of (ML--2). 
Nevertheless, we consider (ML--2) because it is suitable for the blow-up analysis \tn{(see Section \ref{sec:5})}. 


\medskip

We \tn{also} summarize the well-posedness of our schemes. The proof is \tn{omitted because it is identical to Theorems 3.1 and 3.2 in}~\cite{ns20}. 

\begin{thm}
\label{prop:1}
Suppose that $n\ge 0$ and $u_h^n\in S_h$ are given.  
\begin{enumerate}
\item[\textup{(i)}] \tn{Schemes} \textup{(ML--1)} and \textup{(ML--2)} admit unique solutions $u_h^{n+1}\in S_h$. 
\item[\textup{(ii)}] In addition to the basic assumption on $f$, assume that 
$f$ is a non-decreasing function with $f(0)\ge 0$. 
If $u_{h}^n\ge 0$,
then the solution $u_h^{n+1}$ of \textup{(ML--1)} satisfies $u_h^{n+1}\ge 0$. 
\item[\textup{(iii)}] Under the assumptions of \textup{(ii)} above, \tn{further assume} that
\begin{equation} 
\label{eq:tau1}
\tau\le \min_{0\le i\le m-1}\frac{(1,\phi_{i})}{A(\phi_{i},\phi_{i})}.
\end{equation}
Then the solution $u_h^{n+1}$ of \textup{(ML--2)} satisfies $u_h^{n+1}\ge 0$.
\end{enumerate}
\end{thm}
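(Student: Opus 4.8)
The plan is to convert each scheme into a linear algebraic system in the nodal values and read off all three assertions from the sign structure of the lumped mass matrix and the stiffness matrix. Writing $u_h^{n+1}=\sum_{i=0}^{m-1}U_i^{n+1}\phi_i$ and testing against $\chi=\phi_i$, the lumping collapses the mass term to a single node: by \eqref{eq:lump} one has $\dual{w,\phi_i}=w(x_i)(1,\phi_i)$, so the mass matrix is the diagonal matrix $D=\operatorname{diag}\bigl((1,\phi_0),\dots,(1,\phi_{m-1})\bigr)$ with strictly positive entries. Let $K=(a_{ij})$, $a_{ij}=A(\phi_j,\phi_i)$, be the stiffness matrix and $F^n=\bigl((f(u_h^n),\phi_i)\bigr)_i$. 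For assertion (i), scheme \textup{(ML--1)} reads $(\tau_n^{-1}D+K)U^{n+1}=\tau_n^{-1}DU^n+F^n$. Since $A(v,v)=\int_I x^{N-1}v_x^2\,dx>0$ for every nonzero $v\in S_h$ (if it vanishes then $v_x=0$ a.e., and $v(1)=0$ forces $v\equiv 0$), the matrix $K$ is symmetric positive definite; adding the positive definite $\tau_n^{-1}D$ preserves this, so the system matrix is invertible, giving existence and uniqueness. For \textup{(ML--2)} the diagonality of $D$ makes the scheme explicit, namely $U_i^{n+1}=U_i^n-\tau_n(1,\phi_i)^{-1}A(u_h^n,\phi_i)+\tau_n f(U_i^n)$, so its solution is trivially well defined.

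For assertion (ii) I would show that $M:=\tau_n^{-1}D+K$ is a Stieltjes matrix. The key computation is that for $i\ne j$ the hat functions $\phi_i,\phi_j$ share at most one subinterval, on which their derivatives are opposite nonzero constants, so $a_{ij}=\int_I x^{N-1}\phi_j'\phi_i'\,dx\le 0$ because $x^{N-1}\ge 0$; meanwhile $a_{ii}>0$. Thus $M$ is symmetric positive definite with nonnegative diagonal and nonpositive off-diagonal entries, i.e.\ a Stieltjes matrix, and therefore $M^{-1}\ge 0$ entrywise. It then suffices to check that the right-hand side $\tau_n^{-1}DU^n+F^n$ is nonnegative: the first term is nonnegative since $U^n\ge 0$ and $D>0$, while $F^n_i=(f(u_h^n),\phi_i)\ge 0$ because $u_h^n\ge 0$ together with $f$ nondecreasing and $f(0)\ge 0$ give $f(u_h^n)\ge f(0)\ge 0$ pointwise, and $x^{N-1}\phi_i\ge 0$. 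Hence $U^{n+1}=M^{-1}(\tau_n^{-1}DU^n+F^n)\ge 0$, and $u_h^{n+1}=\sum_i U_i^{n+1}\phi_i\ge 0$ as the $\phi_i$ are nonnegative.

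For assertion (iii) I would work directly with the explicit update. Separating the diagonal contribution of $A(u_h^n,\phi_i)=\sum_j a_{ij}U_j^n$ gives
\[
U_i^{n+1}=\Bigl(1-\tfrac{\tau_n a_{ii}}{(1,\phi_i)}\Bigr)U_i^n-\tfrac{\tau_n}{(1,\phi_i)}\sum_{j\ne i}a_{ij}U_j^n+\tau_n f(U_i^n).
\]
The middle sum is nonnegative because $a_{ij}\le 0$ and $U_j^n\ge 0$; the source term is nonnegative because $f(U_i^n)\ge f(0)\ge 0$; and the coefficient of $U_i^n$ is nonnegative precisely when $\tau_n\le (1,\phi_i)/A(\phi_i,\phi_i)$, which holds for every $i$ since $\tau_n\le\tau$ and \eqref{eq:tau1}. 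Therefore $U_i^{n+1}\ge 0$ for all $i$, whence $u_h^{n+1}\ge 0$.

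I expect the only substantive point to be the Stieltjes structure underlying assertion (ii)---verifying the sign of the off-diagonal stiffness entries and invoking the inverse-positivity of symmetric M-matrices---while (i) and (iii) reduce to bookkeeping once the matrix form is in place. Throughout, the nonnegativity of the weight $x^{N-1}$ is what makes the sign computations go through uniformly for every $N\ge 2$.
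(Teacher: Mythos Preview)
Your proposal is correct. The paper itself omits the proof entirely, referring to Theorems 3.1 and 3.2 of \cite{ns20}; your matrix-based argument---reducing both schemes to nodal systems, reading off invertibility from positive definiteness, and obtaining nonnegativity from the Stieltjes/M-matrix structure of $\tau_n^{-1}D+K$ for (ML--1) and from the explicit update for (ML--2)---is the standard route and is consistent with the techniques the authors deploy in the closely related Lemma~\ref{prop:comp1}, where exactly the same tridiagonal representation with $a_{i,j}=A(\phi_j,\phi_i)$ and $m_i=(1,\phi_i)$ is written out.
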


\tn{To provide a} useful sufficient condition \tn{under which} \eqref{eq:tau1} \tn{holds}, we assume that 
the partition $\{x_i\}_{j=0}^m$ of $\bar{I}=[0,1]$ is quasi-uniform, that is, 
\begin{equation}
\label{eq:beta}
h \le \beta \min_{1\le j \le m}h_{j},
\end{equation}
where $\beta$ is a positive constant independent of $h$. 

\begin{thm}
\label{th:tau}
\tn{Inequality} \eqref{eq:tau1} holds if 
\begin{equation}
 \label{eq:tau1a}
\tau \le\frac{\beta^{2}}{N+1}h^{2}.
\end{equation}
\end{thm}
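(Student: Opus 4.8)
The plan is to reduce the uniform bound \eqref{eq:tau1} to an elementwise estimate and to prove the sharper, mesh-dependent inequality
\[
\frac{(1,\phi_i)}{A(\phi_i,\phi_i)}\ \ge\ \frac{1}{N+1}\Bigl(\min_{1\le j\le m}h_j\Bigr)^{2}\qquad(0\le i\le m-1),
\]
after which the quasi-uniformity assumption \eqref{eq:beta}, in the form $\min_{j}h_j\ge h/\beta$, turns the right-hand side into a quantity depending only on $h$ and $\beta$ and delivers \eqref{eq:tau1a}. By \eqref{eq:4} both $(1,\phi_i)=\int_I x^{N-1}\phi_i\,dx$ and $A(\phi_i,\phi_i)=\int_I x^{N-1}(\phi_i')^{2}\,dx$ are explicit weighted integrals over $\supp\phi_i$, so the argument is a direct but weight-sensitive computation.

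First I would dispose of the boundary node $i=0$, whose basis function is supported only on the first element $[0,x_1]$, where the weight $x^{N-1}$ degenerates. A direct evaluation gives $A(\phi_0,\phi_0)=x_1^{N-2}/N$ and $(1,\phi_0)=x_1^{N}/\bigl(N(N+1)\bigr)$, so the ratio equals \emph{exactly} $h_1^{2}/(N+1)$. This already shows that the constant $1/(N+1)$ is sharp and that it originates from the degeneracy at the origin; $i=0$ is the extremal node.

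For an interior node $1\le i\le m-1$, $\phi_i$ is affine on each of the two elements $I_i$ and $I_{i+1}$ of its support. I would split both integrals over these elements and apply the mediant inequality $\tfrac{p+q}{r+s}\ge\min\{p/r,\,q/s\}$ (valid for positive denominators) to reduce the estimate to the two single-element ratios. Each such ratio equals the element width times the weighted mean of a linear factor against the probability density proportional to $x^{N-1}$ on that element. On the rising element $I_i$ this density increases, so the mean of $x-x_{i-1}$ exceeds $h_i/2$, giving the crude but sufficient bound $h_i^{2}/2\ge h_i^{2}/(N+1)$.

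The main obstacle is the falling element $I_{i+1}$, where the weight concentrates near the right endpoint while the factor $x_{i+1}-x$ vanishes there, so any crude estimate loses the sharp constant. Here the required statement is that the weighted mean of $x_{i+1}-x$ is at least $h_{i+1}/(N+1)$; after clearing denominators this reduces precisely to the tangent-line (convexity) inequality $b^{N}\ge a^{N}+N a^{N-1}(b-a)$ for $t\mapsto t^{N}$ with $a=x_i$ and $b=x_{i+1}$, which is elementary and recovers the $i=0$ value in the limit $a\to0$. Combining the two single-element bounds yields $\dfrac{(1,\phi_i)}{A(\phi_i,\phi_i)}\ge\dfrac{1}{N+1}\min\{h_i,h_{i+1}\}^{2}$ for every interior node. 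Taking the minimum over all $i$ and invoking \eqref{eq:beta} then gives \eqref{eq:tau1a}.
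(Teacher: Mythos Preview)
Your approach is the paper's: the paper simply asserts that ``a direct calculation gives $\frac{(1,\phi_i)}{A(\phi_i,\phi_i)}\ge \frac{1}{N+1}h_i^2$'' and then invokes quasi-uniformity, whereas you actually carry out that calculation (the boundary node, the mediant splitting, and the convexity inequality $b^N\ge a^N+Na^{N-1}(b-a)$ on the falling element). Your computation is correct and isolates exactly why the constant $1/(N+1)$ appears.

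There is, however, a slip in the very last step --- the same one the paper makes. From $\min_j h_j\ge h/\beta$ your elementwise bound yields
\[
\min_{0\le i\le m-1}\frac{(1,\phi_i)}{A(\phi_i,\phi_i)}\ \ge\ \frac{1}{N+1}\,\Bigl(\min_{1\le j\le m} h_j\Bigr)^{2}\ \ge\ \frac{h^{2}}{(N+1)\beta^{2}},
\]
so the sufficient condition you have actually established is $\tau\le \dfrac{h^{2}}{(N+1)\beta^{2}}$, not $\tau\le \dfrac{\beta^{2}h^{2}}{N+1}$ as stated in \eqref{eq:tau1a}. Since $h=\max_j h_j\ge\min_j h_j$ forces $\beta\ge 1$, the condition \eqref{eq:tau1a} is \emph{weaker} than what you proved and therefore does not follow from it. This is evidently a typo in the paper: its own proof writes ``$\frac{1}{N+1}h_i^{2}\ge\frac{\beta^{2}}{N+1}h^{2}$'', which is false in general. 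The statement both your argument and the paper's argument genuinely establish is that \eqref{eq:tau1} holds whenever $\tau\le h^{2}/\bigl((N+1)\beta^{2}\bigr)$; you should say so explicitly rather than claiming to have ``delivered \eqref{eq:tau1a}''.
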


\begin{proof}
\tn{A} direct calculation \tn{gives} 
\[
\frac{(1,\phi_{i})}{A(\phi_{i},\phi_{i})}\ge \frac{1}{N+1}h_{i}^{2}\ge\frac{\beta^{2}}{N+1}h^{2}
\]
for any $i$. 
Therefore, \eqref{eq:tau1a} implies \eqref{eq:tau1}. 
\end{proof}

We \tn{now} proceed to the convergence analysis. 
Our results for (ML--1) and (ML--2) \tn{assume a smooth} solution $u$ of \eqref{eq:1}: given $T>0$ and setting $Q_T=[0,1]\times [0,T]$, we assume that $u$ is sufficiently smooth such that 
\begin{equation}
\label{eq:smooth1}
\kappa(u)=
\sum_{k=0}^{2} \|\partial_x^k u\|_{L^{\infty}(Q_T)}
+\sum_{l=1}^{2} \|\partial_t^l u\|_{L^{\infty}(Q_T)}
+\sum_{k=1}^{2}\|\partial_t\partial_x^ku\|_{L^{\infty}(Q_T)}
<\infty.
\end{equation}
\tn{Here, we have used the conventional} $\|v\|_{L^\infty(\omega)}=\max_{\overline{\omega}}|v|$ for a continuous function $v$ defined in a bounded set $\omega$ in $\mathbb{R}^p$, $p\ge 1$. 

Moreover, the approximate initial value $u_h^0$ is chosen as 
\begin{equation}
\label{eq:iv}
\|u_h^0-u^0\|_{L^{\infty}(I)}  \le C_0h^2 
\end{equation}
for a positive constant $C_0$. 

We \tn{now} express positive constants $C=C(\gamma_1,\gamma_2,\ldots)$ depending only on the parameters $\gamma_1,\gamma_2,\ldots$. Particularly, $C$ is independent of $h$ and $\tau$.

\begin{thm}[Optimal $L^2$ error estimate for (ML-1)]
\label{th:ml1-1} 
Assume that, for $T>0$, the solution $u$ of \eqref{eq:1} is sufficiently smooth that \eqref{eq:smooth1} holds. Moreover, assume that \eqref{eq:beta} and \eqref{eq:iv} are satisfied. 
Then, for sufficiently small $h$ and $\tau$, we have
\begin{equation}
\label{eq:ml1a} 
\sup_{0\le t_n\le T}\|u_{h}^{n}-u(\cdot,t_{n})\|\le C(h^{2}+\tau),
\end{equation}
where $C=C(T, f, \kappa(u), C_0, N,\beta)$ and $u_h^n$ is the solution of \textup{(ML--1)}.  
\end{thm}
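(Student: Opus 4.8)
The plan is to follow the classical Ritz-projection / energy method for backward-Euler finite element schemes, adapted here to the weighted inner products and to the mass-lumping term. First I would introduce the elliptic (Ritz) projection $R_h:\dot{H}^{1}\to S_h$ defined by $A(R_hw,\chi)=A(w,\chi)$ for all $\chi\in S_h$, which is well posed by the coercivity of $A$ on $\dot{H}^{1}$. I would then split the error as $u_h^n-u(\cdot,t_n)=\theta^n+\rho^n$, with $\theta^n=u_h^n-R_hu(\cdot,t_n)\in S_h$ and $\rho^n=R_hu(\cdot,t_n)-u(\cdot,t_n)$. The projection part $\rho^n$ is handled purely by weighted elliptic approximation theory: the Aubin--Nitsche duality estimate in the weighted $L^2$ norm (a preliminary result, as for the symmetric scheme in \cite{et84,ns20}) gives $\|\rho^n\|\le Ch^2$, using the $H^2$-type regularity contained in $\kappa(u)$. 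It then remains to estimate $\theta^n$.

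Next I would derive the error equation. Subtracting the weak form \eqref{eq:w1} at $t_{n+1}$ from the scheme \eqref{eq:3}, writing $u_h^{n+1}=\theta^{n+1}+R_hu(\cdot,t_{n+1})$, and using $A(u(\cdot,t_{n+1}),\chi)=A(R_hu(\cdot,t_{n+1}),\chi)$, I obtain
\[
\dual{\partial_{\tau_n}\theta^{n+1},\chi}+A(\theta^{n+1},\chi)=(\omega^{n+1},\chi)+\big[(g^{n+1},\chi)-\dual{g^{n+1},\chi}\big]+(f(u_h^n)-f(u(\cdot,t_{n+1})),\chi),
\]
where $g^{n+1}=\partial_{\tau_n}R_hu(\cdot,t_{n+1})=R_h\big(\partial_{\tau_n}u(\cdot,t_{n+1})\big)\in S_h$ and $\omega^{n+1}=u_t(\cdot,t_{n+1})-g^{n+1}$ collects the backward-Euler truncation $u_t(\cdot,t_{n+1})-\partial_{\tau_n}u(\cdot,t_{n+1})$ and $-\partial_{\tau_n}\rho^{n+1}$. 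The truncation contributes $O(\tau)$ through $\|\partial_t^2u\|_{L^\infty(Q_T)}$, while $\partial_{\tau_n}\rho^{n+1}=\tau_n^{-1}\int_{t_n}^{t_{n+1}}(R_h-I)u_t\,dt$ contributes $O(h^2)$ through the Ritz error for $u_t$ (here the terms $\|\partial_t\partial_x^ku\|_{L^\infty(Q_T)}$, $k\le 2$, of $\kappa(u)$ are used), so $\|\omega^{n+1}\|\le C(h^2+\tau)$. I would then test with $\chi=\theta^{n+1}$ and exploit $\dual{\partial_{\tau_n}\theta^{n+1},\theta^{n+1}}\ge\tfrac{1}{2\tau_n}(\vnorm{\theta^{n+1}}^2-\vnorm{\theta^n}^2)$.

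The crucial term is the mass-lumping consistency $(g^{n+1},\theta^{n+1})-\dual{g^{n+1},\theta^{n+1}}$, and this is exactly where the new definition \eqref{eq:lump} pays off. Since $\dual{w,v}=(1,\Pi_h(wv))$ for $w,v\in S_h$ (an immediate extension of Lemma \ref{prop:dual}), the term equals $\int_I x^{N-1}\big(g^{n+1}\theta^{n+1}-\Pi_h(g^{n+1}\theta^{n+1})\big)\,dx$, i.e. the weighted integral of the piecewise-linear interpolation error of the piecewise-quadratic product $g^{n+1}\theta^{n+1}$. On each $I_j$ this error is bounded pointwise by $Ch_j^2|(g^{n+1})'|\,|(\theta^{n+1})'|$, so weighting by $x^{N-1}$ and applying Cauchy--Schwarz gives the bound $Ch^2A(g^{n+1},g^{n+1})^{1/2}A(\theta^{n+1},\theta^{n+1})^{1/2}$. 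As $A(g^{n+1},g^{n+1})^{1/2}\le A(\partial_{\tau_n}u(\cdot,t_{n+1}),\partial_{\tau_n}u(\cdot,t_{n+1}))^{1/2}\le C\kappa(u)$ by the $A$-stability of $R_h$, this is $\le Ch^2A(\theta^{n+1},\theta^{n+1})^{1/2}$, which Young's inequality absorbs into $\tfrac12A(\theta^{n+1},\theta^{n+1})$ on the left (this is why I retain, rather than discard, the coercive term). The nonlinear term splits as $f(u_h^n)-f(u(\cdot,t_n))$ plus $f(u(\cdot,t_n))-f(u(\cdot,t_{n+1}))$, the latter being $O(\tau)$; for the former I invoke the sub-optimal $L^\infty$ bound (Theorem \ref{th:ml1-2}), which keeps $u_h^n$ in a fixed bounded interval for small $h,\tau$ so that $f$ acts there with a fixed Lipschitz constant $L$, giving $|f(u_h^n)-f(u(\cdot,t_n))|\le L|\theta^n+\rho^n|$ and hence $L\|\theta^n\|+Ch^2$. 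Collecting everything, using the norm equivalence $\|\cdot\|\simeq\vnorm{\cdot}$ on $S_h$ (Lemma \ref{prop:norm}) and Young's inequality, I reach $\vnorm{\theta^{n+1}}^2-\vnorm{\theta^n}^2\le C\tau_n\big((h^2+\tau)^2+\vnorm{\theta^n}^2+\vnorm{\theta^{n+1}}^2\big)$; a discrete Grönwall inequality (valid once $\tau$ is small enough to absorb the last term) together with $\vnorm{\theta^0}\le Ch^2$ from \eqref{eq:iv} yields $\sup_{t_n\le T}\vnorm{\theta^n}\le C(h^2+\tau)$, and combining with the bound for $\rho^n$ gives \eqref{eq:ml1a}.

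The main obstacle is twofold, and both difficulties are specific to this setting rather than to the classical theory. First, one must show the mass-lumping consistency error is genuinely of optimal order $O(h^2)$ despite the singular weight $x^{N-1}$ near $x=0$; this rests delicately on the choice \eqref{eq:lump} (the naive lumping \eqref{eq:lump0} does not deliver it) and on the possibility of burying the resulting term in the coercive form $A$. Second, because $f$ is only locally Lipschitz, the whole Grönwall step must be run conditionally on the a priori $L^\infty$ bound of Theorem \ref{th:ml1-2}, so that a fixed Lipschitz constant is available throughout.
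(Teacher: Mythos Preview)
Your proposal is correct and follows essentially the same route as the paper: the Ritz projection $R_h$ coincides with the paper's $P_A$, your terms $\omega^{n+1}$, the mass-lumping consistency, and the nonlinear splitting correspond exactly to the paper's terms $\textup{I}$--$\textup{V}$, and your treatment of the lumping error via the piecewise-quadratic interpolation bound is precisely the paper's use of Lemmas~\ref{prop:dual} and~\ref{prop:interval}. The only cosmetic difference is that the paper packages the localization of $f$ as a formal two-step argument (prove the estimate for a globally Lipschitz truncation $\tilde f$, then use Theorem~\ref{th:ml1-2} to identify $u_h^n$ with $\tilde u_h^n$), whereas you invoke Theorem~\ref{th:ml1-2} directly inside the energy estimate; also, the paper routes all right-hand terms through $\|(v_h^{n+1})_x\|$ via Poincar\'e before applying Young's inequality, so its Gr\"onwall step involves only $\vnorm{v_h^{n}}^{2}$ rather than both $\vnorm{\theta^{n}}^{2}$ and $\vnorm{\theta^{n+1}}^{2}$.
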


The following result, which is worth \tn{a separate mention}, gives only a sub-optimal error estimate \tn{but is} useful \tn{for proving} Theorem \ref{th:ml1-1}. 

\begin{thm}[Sub-optimal $L^\infty$ error estimate for (ML--1)]
\label{th:ml1-2}
Under the assumptions of Theorem  \ref{th:ml1-1} \tn{and} for sufficiently small $h$ and $\tau$, \tn{we have} 
\begin{equation}
\label{eq:ml1b} 
\sup_{0\le t_n\le T}\|u_{h}^{n}-u(\cdot,t_{n})\|_{L^{\infty}(I)}
\le C\left(h+\tau\right),
\end{equation}
where $C=C(T, f, \kappa(u), C_0, N,\beta)$ and $u_h^n$ is the solution of \textup{(ML--1)}.  
\end{thm}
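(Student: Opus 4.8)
The plan is to prove the nodal error bound by a discrete comparison (barrier) argument resting on the $M$-matrix structure that already underlies Theorem \ref{prop:1}, rather than by the energy-plus-inverse-inequality route of \cite{ns20} that forced $N\le 3$. Write $u^n=u(\cdot,t_n)$ and split the error as $u_h^n-u^n=\theta^n+(\Pi_hu^n-u^n)$ with $\theta^n:=u_h^n-\Pi_hu^n\in S_h$. Standard interpolation gives $\|\Pi_hu^n-u^n\|_{L^{\infty}(I)}\le Ch^2\kappa(u)$, so it suffices to control $\theta^n$; and since $\theta^n$ is piecewise linear, $\|\theta^n\|_{L^\infty(I)}=\max_i|\theta^n(x_i)|$, so the whole problem reduces to a nodewise estimate. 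Note also that $\theta^n(x_m)=0$ because both $u_h^n$ and $\Pi_hu^n$ vanish at $x_m=1$.

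First I would derive the discrete equation for $\theta$. Inserting $\Pi_hu$ into \eqref{eq:3}, subtracting, and using that the exact solution satisfies $(u_t(\cdot,t_{n+1}),\chi)+A(u^{n+1},\chi)=(f(u^{n+1}),\chi)$, one obtains, for each interior node,
\[
\dual{\partial_{\tau_n}\theta^{n+1},\phi_i}+A(\theta^{n+1},\phi_i)=\rho_i^{n+1}\qquad(i=0,\ldots,m-1),
\]
where $\rho_i^{n+1}$ collects four contributions: the time-truncation error $\partial_{\tau_n}u^{n+1}-u_t(\cdot,t_{n+1})$, of order $\tau$ by Taylor expansion with $\partial_t^2u$ bounded; the lumping/interpolation discrepancy between $\dual{\partial_{\tau_n}\Pi_hu^{n+1},\phi_i}$ and $(\partial_{\tau_n}u^{n+1},\phi_i)$, handled through Lemma \ref{prop:dual} and a quadrature-error estimate; the spatial consistency $A(\Pi_hu^{n+1}-u^{n+1},\phi_i)$; and the nonlinear lag $(f(u^{n+1})-f(u_h^n),\phi_i)$. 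The target is a bound of the form $|\rho_i^{n+1}|\le C(h+\tau)(1,\phi_i)+L\,|\theta^n(x_i)|\,(1,\phi_i)$, where the last term comes from writing $f(u^{n+1})-f(u_h^n)$ and using the local Lipschitz constant $L$ of $f$ on the range of $u_h$.

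Next comes the comparison step, which is the heart of the argument. In matrix form the scheme reads $(\tau_n^{-1}B+K)\,\Theta^{n+1}=\tau_n^{-1}B\,\Theta^n+G^{n+1}$, with $B$ the diagonal lumped-mass matrix $B_{ii}=(1,\phi_i)>0$, $K_{ij}=A(\phi_j,\phi_i)$, and $G^{n+1}_i=\rho_i^{n+1}$; exactly as in Theorem \ref{prop:1} this system matrix is a strictly diagonally dominant $M$-matrix, so $(\tau_n^{-1}B+K)^{-1}\ge 0$ entrywise and the discrete maximum principle applies. I would introduce the barrier $\Phi(x)=(1-x^2)/(2N)$, which satisfies $-\big(\Phi_{xx}+\tfrac{N-1}{x}\Phi_x\big)=1$, $\Phi_x(0)=\Phi(1)=0$ and $0\le\Phi\le 1/(2N)$, together with its discrete analogue $\psi_h\in S_h$ defined by $A(\psi_h,\chi)=(1,\chi)$, whose nodal vector satisfies $K\Psi=B\mathbf 1$ and which is bounded uniformly in $h$. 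Comparing $\pm\theta^{n+1}$ against a suitable multiple of $\psi_h$ plus a time-accumulated term and inducting on $n$ yields, via the $M$-matrix monotonicity, $\max_i|\theta^{n}(x_i)|\le e^{Ct_n}\big(\|\theta^0\|_{L^\infty(I)}+C(h+\tau)\big)$, the exponential factor arising from a discrete Gronwall absorption of the $L|\theta^n|$ feedback. The local Lipschitz nature of $f$ is then accommodated by a bootstrap: letting $n^\ast$ be the largest index with $\|u_h^k-u^k\|_{L^\infty(I)}\le1$ for all $k\le n^\ast$, the values of $u_h$ stay in a fixed bounded set on $[0,t_{n^\ast}]$, so $L=L(\kappa(u))$ is fixed there, the estimate above gives $\|u_h^n-u^n\|_{L^\infty(I)}\le C(h+\tau)$ for $n\le n^\ast$, and for $h,\tau$ small this is $<1$, so the defining bound is never attained and $t_{n^\ast}\ge T$, establishing \eqref{eq:ml1b} on all of $[0,T]$.

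The step I expect to be the main obstacle is the spatial consistency term $A(\Pi_hu^{n+1}-u^{n+1},\phi_i)$, especially on the origin element $(0,x_1)$ where the weight $x^{N-1}$ degenerates and the singular coefficient $\tfrac{N-1}{x}$ concentrates: elementwise integration by parts must be arranged so as to produce a bound of the precise shape $Ch\,(1,\phi_i)$ that is compatible with the barrier $\psi_h$ and the $M$-matrix comparison. It is exactly this term, together with the quadrature error of the lumped product $\dual{\cdot,\cdot}$, that costs one power of $h$ and is responsible for the sub-optimal rate $h+\tau$ rather than $h^2+\tau$; the sharper weighted-$L^2$ estimate of Theorem \ref{th:ml1-1} must then be recovered separately, using \eqref{eq:ml1b} as an a priori $L^\infty$ bound to linearize $f$.
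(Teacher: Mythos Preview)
Your overall framework—rewrite the error equation nodewise, exploit the $M$-matrix structure of $\tau_n^{-1}B+K$ for a discrete comparison/Gronwall argument, and close with a bootstrap on the local Lipschitz constant—is essentially what the paper does (the comparison step is Lemma~\ref{prop:comp1}; the bootstrap is Step~2 in the proof of Theorem~\ref{th:ml1-1}). The genuine gap is your choice of splitting: you use the Lagrange interpolant $\Pi_h$, whereas the paper uses the Ritz projection $P_A$ of \eqref{eq:pA}. This is not cosmetic. With $P_A$ the term $A(P_Au^{n+1}-u^{n+1},\phi_i)$ vanishes identically, so no spatial consistency term enters the residual; the sub-optimal factor $h$ in the paper's proof arises solely from the lumping discrepancy $\textup{V}_2(\phi_i)=(\partial_{\tau_n}u,\phi_i)-\dual{\partial_{\tau_n}u,\phi_i}$, which is trivially $\le Ch\,\kappa(u)\,(1,\phi_i)$.

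With $\Pi_h$, by contrast, you must bound $A(\Pi_hu-u,\phi_i)$, and the target inequality $|A(\Pi_hu-u,\phi_i)|\le Ch\,(1,\phi_i)$ that your barrier comparison needs \emph{fails} near the origin—exactly the place you flagged as the main obstacle. At $i=0$, using $u_x(0)=0$, a direct computation gives
\[
A(\Pi_hu-u,\phi_0)=\frac{(N-1)\,u_{xx}(0)}{2N(N+1)}\,h_1^{N}+O(h_1^{N+1}),\qquad (1,\phi_0)=\frac{h_1^{N}}{N(N+1)},
\]
so the ratio equals $\tfrac{N-1}{2}|u_{xx}(0)|+O(h)$: order one, not $h$. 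More generally the elementwise analysis yields $|A(\Pi_hu-u,\phi_i)|/(1,\phi_i)\sim Ch^2/x_i^2$, which stays $O(1)$ for the first few nodes. Your barrier $\psi_h$ cannot absorb this: the continuous analogue matched to a residual $\sim x^{-2}$ would solve $-(x^{N-1}\eta')'=x^{N-3}$, i.e.\ $\eta\sim\log(1/x)$, unbounded at the origin. In the Gronwall picture this means $\|G_h^n\|_{L^\infty(I)}=O(1)$ rather than $O(h+\tau)$, and the final bound degenerates to $O(1)$. The fix is precisely the paper's: split with $P_A$ so the stiffness consistency disappears; the elaborate barrier $\psi_h$ is then also unnecessary, since Lemma~\ref{prop:comp1} already yields $\|v_h^{n+1}\|_{L^\infty(I)}\le(1+M\tau_n)\|v_h^n\|_{L^\infty(I)}+C\tau_n(h+\tau)$ directly.
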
  


\begin{thm}[Sub-optimal $L^\infty$ error estimate for (ML--2)]
\label{th:ml2-1}
\tn{Also under} the assumptions of Theorem \ref{th:ml1-1}, assume that \eqref {eq:tau1} is satisfied.  
Then, for sufficiently small $h$ and $\tau$, we have 
\begin{equation}
\label{eq:ml2-2}
 \sup_{0\le t_n\le T}\|u_{h}^{n}-u(\cdot,t_{n})\|_{L^{\infty}(I)} \le C\left(h+\tau\right),
\end{equation}
where $C=C(T, f, \kappa(u), C_0, N,\beta)$ and $u_h^n$ is the solution of \textup{(ML--2)}.   
\end{thm}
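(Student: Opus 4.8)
The plan is to follow the discrete-maximum-principle argument underlying Theorem \ref{th:ml1-2}, adapting it to the fully explicit, doubly lumped scheme \eqref{eq:8}. Writing $U_h^n=\Pi_h u(\cdot,t_n)$ and $e_h^n=u_h^n-U_h^n\in S_h$, I note that both functions are piecewise linear and interpolate $u$ at the nodes, so a standard interpolation estimate gives $\|u_h^n-u(\cdot,t_n)\|_{L^\infty(I)}\le \max_{0\le i\le m-1}|e_h^n(x_i)|+Ch^2\kappa(u)$; it therefore suffices to control the nodal errors $E_i^n=e_h^n(x_i)$. Testing \eqref{eq:8} with $\chi=\phi_i$ and using \eqref{eq:lump} collapses the scheme to the collocation-type recursion $\partial_{\tau_n}u_h^{n+1}(x_i)=-(1,\phi_i)^{-1}A(u_h^n,\phi_i)+f(u_h^n(x_i))$, with $A$ as in \eqref{eq:4}. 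Subtracting the same identity with $U_h$ in place of $u_h$ defines a nodal truncation error $\rho_i^n$ and yields $E_i^{n+1}=\sum_j G_{ij}E_j^n+\tau_n[f(u_h^n(x_i))-f(U_h^n(x_i))]-\tau_n\rho_i^n$, where $G=I-\tau_n M^{-1}K$, $M$ is the diagonal lumped-mass matrix with entries $(1,\phi_i)$, and $K$ is the stiffness matrix with entries $A(\phi_j,\phi_i)$.

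The explicit diffusion is what forces the time-step restriction. Under \eqref{eq:tau1}, Theorem \ref{prop:1}(iii) yields $G_{ii}\ge0$; since $A(\phi_j,\phi_i)\le0$ for neighbouring nodes and vanishes otherwise, every $G_{ij}\ge0$ with $\sum_j G_{ij}\le1$, so $G$ is a discrete contraction compatible with the maximum principle. This is the only place \eqref{eq:tau1} enters. To handle the merely local Lipschitz continuity of $f$, I would argue by induction on $n$: assuming $\|e_h^k\|_{L^\infty(I)}=O(h+\tau)$ for all $k\le n$ keeps every nodal value in a fixed bounded set (recall $\|u\|_{L^\infty(Q_T)}\le\kappa(u)$ by \eqref{eq:smooth1}), on which $f$ is Lipschitz with constant $L_f$, so that $|f(u_h^n(x_i))-f(U_h^n(x_i))|\le L_f|E_i^n|$.

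The heart of the proof is the estimation of $\rho_i^n$, and this is also where the sub-optimality originates. I would split $\rho_i^n$ into three defects: a time-stepping part, which is $O(\tau)$ by Taylor expansion and \eqref{eq:smooth1}; a lumped-reaction part $\dual{f(U_h^n),\phi_i}-(f(u(\cdot,t_n)),\phi_i)$, which is $O(h)$ and in fact $O(h^2)$ at the nodes nearest the origin because the Neumann condition \eqref{eq:1b} forces $u_x(0)=0$; and the stiffness defect $(1,\phi_i)^{-1}A(\Pi_h u-u,\phi_i)$. The last term is the hard one. Integrating by parts against the weight $x^{N-1}$ and using $(\Pi_h u-u)(x_j)=0$ shows that this defect is $O(h/x_i)=O(h)$ for nodes bounded away from the origin, but at the first one or two nodes, where $x_i\sim h$, it degrades to $O(1)$: comparing $(1,\phi_0)^{-1}A(\Pi_h u,\phi_0)$ with $Lu(0)=Nu''(0)$, where $Lv=v_{xx}+\tfrac{N-1}{x}v_x$, exhibits an irreducible gap of size $\tfrac{1-N}{2}u''(0)$. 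This genuine inconsistency of the weighted discrete Laplacian at the singular point is precisely why the special lumping \eqref{eq:lump} is needed and why no rate better than sub-optimal can be expected.

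The remaining, and most delicate, step is to show that this $O(1)$ nodal residual at the origin does not spoil the global bound. Rather than bounding $\max_i|\rho_i^n|$ crudely, I would set up a discrete comparison against a barrier $W_i^n=e^{L_f t_n}\Phi_i$ with a stationary $\Phi_h=\Pi_h(w_1+w_2)\in S_h$, where $w_2$ solves $-Lw_2=Ch$ on $I$ with $w_2(1)=0$ (contributing the $O(h)$ part) and $w_1$ solves $-Lw_1=C\mathbf{1}_{[0,2h]}$ with $w_1(1)=0$. The key observation is that the bad residual is confined to an $O(h)$-neighbourhood of the origin whose total weighted mass $\int_0^{2h}x^{N-1}\,dx=O(h^N)$ is negligible, so the associated potential satisfies $w_1=O(h^2)$ uniformly; hence $\Phi_h=O(h)$. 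Using $e^{L_f\tau_n}\ge 1+L_f\tau_n$ one checks that $W_h^n$ is a super-solution of the error recursion, and the discrete maximum principle then gives $|E_i^n|\le W_i^n$, whence $\|e_h^n\|_{L^\infty(I)}\le e^{L_fT}(\max_i|E_i^0|+C(h+\tau))$. Finally \eqref{eq:iv} bounds $\max_i|E_i^0|$ by $Ch^2$, the induction closes for $h$ and $\tau$ small, and combining with the interpolation estimate of the first paragraph yields \eqref{eq:ml2-2}. The main obstacle, and essentially the only departure from the proof of Theorem \ref{th:ml1-2}, is this barrier analysis at the origin together with verifying the explicit discrete comparison principle under \eqref{eq:tau1}.
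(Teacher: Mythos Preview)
Your approach differs from the paper's in one decisive respect: the paper splits the error through the Ritz projection $P_A$ of \eqref{eq:pA}, not the Lagrange interpolant $\Pi_h$.  Because $A(P_Au-u,\chi)=0$ for every $\chi\in S_h$, the ``stiffness defect'' you spend most of your proposal on simply vanishes; there is no $O(1)$ inconsistency at the origin and no need for a barrier.  The paper's proof of Theorem~\ref{th:ml2-1} is then literally one line (``identical to that of Theorem~\ref{th:ml1-2}''): write $v_h^n=u_h^n-P_Au(\cdot,t_n)$, recast the resulting error equation in the form \eqref{eq:ml2}, and apply the $L^\infty$ stability of Lemma~\ref{prop:comp1}, which for the explicit auxiliary problem is exactly where \eqref{eq:tau1} enters.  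The sub-optimal $O(h)$ appears only through the lumping defect
\[
\textup{V}_2(\phi_i)=\bigl(\partial_{\tau_n}u(\cdot,t_{n+1}),\phi_i\bigr)-\dual{\partial_{\tau_n}u(\cdot,t_{n+1}),\phi_i},
\]
bounded by $Ch\,\kappa(u)\,(1,\phi_i)$ via a one-line Lipschitz estimate on $u$; the spatial operator contributes nothing worse than $O(h^2\log(1/h))$ through \eqref{eq:tj2}.  The locally Lipschitz $f$ is handled not by induction but by the truncation $\tilde f$ of Step~2 in the proof of Theorem~\ref{th:ml1-1}.

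Your route via $\Pi_h$ could perhaps be pushed through, but the barrier step you summarize as ``one checks'' is the whole difficulty.  You need the discrete inequality $(1,\phi_i)^{-1}A(\Phi_h,\phi_i)\ge|\rho_i^n|$ at every node, and since $\Phi_h=\Pi_h(w_1+w_2)$ is itself only an interpolant of a continuous solution, its discrete weighted Laplacian at $i=0,1$ inherits the same $O(1)$ inconsistency you are trying to absorb; a genuine computation (not an appeal to $-Lw_j$) is required there, and you have not supplied it.  Even if that computation succeeds, your diagnosis of \emph{why} the estimate is sub-optimal (``irreducible gap of size $\tfrac{1-N}{2}u''(0)$ at the origin'') is an artifact of choosing $\Pi_h$; in the paper's analysis the loss comes entirely from the mass-lumping term $\textup{V}_2$, uniformly over all nodes.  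Switching to $P_A$ removes the obstacle and shortens the argument dramatically.
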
                                                                                                                
\begin{rem}
Other schemes \tn{based on} the mass-lumping $\dual{\cdot,\cdot}$ are possible. For example, \tn{the scheme} 
\begin{equation}
\label{eq:8}
\dual{\partial_{\tau_n}u_h^{n+1},\chi} + A(u_{h}^{n+1},\chi)=\dual{f(u_{h}^{n}),\chi}\quad 
(\chi \in S_{h})
\end{equation} 
and 
\begin{equation}
\label{eq:8}
\dual{\partial_{\tau_n}u_h^{n+1},\chi} + A(u_{h}^{n},\chi)=(f(u_{h}^{n}),\chi)\quad 
(\chi \in S_{h}).
\end{equation} 
have very similar properties \tn{to} those of \textup{(ML--1)} and \textup{(ML--2)}. 
We \tn{omit the} details because the \tn{modifications are easily performed}.   
\end{rem}

\section{Preliminaries}
\label{sec:pre}

\tn{This section gives some} preliminary results \tn{of the theorem proofs.} 
\tn{The} quasi-uniformity condition \eqref{eq:beta} is always assumed. 

\tn{For some $w\in \dot{H}^1$,} the projection operator $P_A$ of $\dot{H}^1\to S_h$ associated with $A(\cdot,\cdot)$ \tn{is} defined as 
\begin{equation}
P_Aw\in S_h,\quad A(P_{A}w -w,\chi)=0\qquad (\chi\in S_{h}) .
 \label{eq:pA}
\end{equation}

\tn{The following error estimates are proved in} \cite{et84} and \cite{jes78}.
\begin{lemma}
\label{prop:tj}
Letting $w\in C^{2}(\bar{I})\cap\dot{H}^1$,  
\tn{and $h$ be sufficient small}, we obtain 
\begin{subequations} 
\begin{align}
\|P_Aw-w\| &\le Ch^{2}\|w_{xx}\|,  \label{eq:tj1} \\
\|P_Aw-w\|_{L^{\infty}(I)}&\le C\left( \log \frac{1}{h} \right)h^{2}\|w_{xx}\|_{L^{\infty}(I)}, \label{eq:tj2}
\end{align}
 \end{subequations}
where $C=(\beta,N)>0$. 
\end{lemma}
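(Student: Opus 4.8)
The plan is to establish \eqref{eq:tj1} and \eqref{eq:tj2} separately, following the weighted finite-element theory of Eriksson--Thom\'{e}e and Jespersen. Throughout, $\|\cdot\|$ denotes the weighted $L^2$ norm associated with $(\cdot,\cdot)$ and $A(\cdot,\cdot)^{1/2}$ the associated energy seminorm; the only genuinely new feature compared with the constant-coefficient theory is the degeneracy of the weight $x^{N-1}$ at the origin.

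For the weighted $L^2$ estimate \eqref{eq:tj1} I would begin from the Galerkin orthogonality $A(P_Aw-w,\chi)=0$ for all $\chi\in S_h$ built into the definition \eqref{eq:pA}. Since this makes $P_Aw$ the $A$-orthogonal projection of $w$ onto $S_h$, the best-approximation property gives, with $e=P_Aw-w$, the energy bound $A(e,e)^{1/2}\le A(w-\Pi_hw,w-\Pi_hw)^{1/2}$, and a weighted interpolation estimate $A(w-\Pi_hw,w-\Pi_hw)^{1/2}\le Ch\|w_{xx}\|$ — obtained by applying the elementwise one-dimensional bound $\|(w-\Pi_hw)_x\|_{L^\infty(I_j)}\le Ch_j\|w_{xx}\|_{L^\infty(I_j)}$, weighting by $x^{N-1}$ and summing, where the smallness of the weight near $x=0$ is harmless — yields the energy error. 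To descend to the weighted $L^2$ norm I would run the Aubin--Nitsche duality argument: letting $\varphi\in\dot H^1$ solve the dual problem $A(\varphi,v)=(e,v)$ for all $v$ (equivalently $-\varphi_{xx}-\tfrac{N-1}{x}\varphi_x=e$ with $\varphi_x(0)=\varphi(1)=0$), orthogonality gives $\|e\|^2=A(e,\varphi)=A(e,\varphi-\Pi_h\varphi)$. Bounding both factors by the energy interpolation estimate and invoking the weighted elliptic regularity $\|\varphi_{xx}\|\le C\|e\|$ for the radial operator produces $\|e\|^2\le Ch^2\|w_{xx}\|\,\|e\|$, whence \eqref{eq:tj1}.

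For the $L^\infty$ estimate \eqref{eq:tj2} I would use the maximum-norm machinery of Nitsche's weighted-norm method (equivalently the Schatz--Wahlbin approach) adapted to the radial weight. Fixing a point $x_0$ at which $|e|$ is essentially maximal, I would introduce a regularized delta $\delta_h$ normalized so that $(\delta_h,v)\approx v(x_0)$, together with the smoothed weight $\sigma(x)=(|x-x_0|^2+(\gamma h)^2)^{1/2}$, and let $g$ solve the dual problem $A(g,v)=(\delta_h,v)$. Using Galerkin orthogonality one writes $e(x_0)\approx(\delta_h,e)=A(e,g)=A(e,g-\Pi_hg)$, and the product is then split and controlled by $\sigma$-weighted energy and $L^2$ estimates for both $e$ and the Green's function $g$. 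The single factor $\log\frac1h$ enters exactly as in the planar case, through the weighted energy $\int_I x^{N-1}|g_x|^2\,dx=O(\log\frac1h)$ of the radial Green's function; this is the characteristic logarithm of piecewise-linear elements in the maximum norm and, here, is produced by the $N$-dimensional behaviour of the operator near the origin.

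I expect the $L^\infty$ estimate \eqref{eq:tj2} to be the main obstacle. The delicate points are the uniform-in-$h$ control of the regularized Green's function $g$ in the $\sigma$-weighted norms for the \emph{degenerate} radial operator, and verifying that the stability and regularity constants of the dual problem do not deteriorate as $x_0\to0$, where $x^{N-1}$ vanishes and the coefficient $\tfrac{N-1}{x}$ is singular. By contrast, \eqref{eq:tj1} is comparatively routine once the weighted interpolation bound and the elliptic regularity of the radial problem are available; the real work is isolating the single $\log\frac1h$ and disentangling the interaction of the weight degeneracy at the origin with the Green's-function singularity at $x_0$.
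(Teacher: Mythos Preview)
The paper does not supply its own proof of this lemma; it simply cites Eriksson--Thom\'ee \cite{et84} and Jespersen \cite{jes78}, where \eqref{eq:tj1} and \eqref{eq:tj2} are established. Your outline is precisely the approach of those references: for \eqref{eq:tj1}, Galerkin best-approximation in the energy norm followed by Aubin--Nitsche duality, using the weighted elliptic regularity $\|\varphi_{xx}\|\le C\|e\|$ for the radial operator (this is available; it is in fact stated later in the paper as Lemma~\ref{lem:regularity}); for \eqref{eq:tj2}, the Nitsche weighted-norm/Green's-function machinery, with the single logarithm arising from the weighted energy of the discrete Green's function. So there is nothing to compare against on the paper's side, and your plan matches the cited sources.

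One small imprecision worth flagging: the route you sketch for the energy interpolation bound, via the elementwise $L^\infty$ estimate $\|(w-\Pi_hw)_x\|_{L^\infty(I_j)}\le Ch_j\|w_{xx}\|_{L^\infty(I_j)}$, does not by itself produce the \emph{weighted} $L^2$ norm $\|w_{xx}\|$ on the right after summing over $j$; it only gives $h\|w_{xx}\|_{L^\infty(I)}$. To land on $\|w_{xx}\|$ you need the elementwise weighted $L^2$ interpolation estimate $\int_{I_j}x^{N-1}|(w-\Pi_hw)_x|^2\,dx\le Ch_j^2\int_{I_j}x^{N-1}|w_{xx}|^2\,dx$ directly. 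This follows from the unweighted one-dimensional estimate once you observe, via quasi-uniformity as in \eqref{eq:beta00}, that $x^{N-1}$ varies by at most a fixed factor on each $I_j$ with $j\ge2$, and handle $I_1$ by a separate scaling computation. With that adjustment your argument for \eqref{eq:tj1} goes through.
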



\begin{lemma}[Inverse estimate] 
\label{prop:inverse}
There \tn{exists a constant $C=C(\beta,N)>0$} such that
\[
\|{w}_x\|\le C h^{-1}\|{w}\|\qquad ({w}\in S_{h}).
\]
\end{lemma}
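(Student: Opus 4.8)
The plan is to prove the estimate element by element and then assemble it using the quasi-uniformity \eqref{eq:beta}. Since $w$ is affine on each $I_j$, both weighted norms split over the elements, $\|w\|^2=\sum_{j=1}^m\int_{I_j}x^{N-1}w^2\,dx$ and $\|w_x\|^2=\sum_{j=1}^m\int_{I_j}x^{N-1}w_x^2\,dx$, so it suffices to establish a local bound
\[
\int_{I_j}x^{N-1}w_x^2\,dx\le C_N\,h_j^{-2}\int_{I_j}x^{N-1}w^2\,dx\qquad(j=1,\dots,m)
\]
with a constant $C_N$ depending only on $N$, and in particular independent of $j$, of $a=x_{j-1}$ and of $h_j$. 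Granting this, I would invoke $h_j\ge h/\beta$ from \eqref{eq:beta} to replace $h_j^{-2}$ by $\beta^2h^{-2}$ in every term, sum over $j$ to obtain $\|w_x\|^2\le C_N\beta^2h^{-2}\|w\|^2$, and take square roots, which yields the claim with $C=\beta\sqrt{C_N}=C(\beta,N)$.

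To prove the local bound I would rescale $I_j=(a,a+h_j)$ to the reference interval $(0,1)$ via $x=a+h_j\xi$, set $\hat w(\xi)=w(a+h_j\xi)$, and write $a=h_j\theta$ with $\theta=x_{j-1}/h_j\ge0$. Cancelling the common powers of $h_j$ reduces the local bound to the scale-free inequality
\[
\int_0^1(\theta+\xi)^{N-1}\hat w_\xi^2\,d\xi\le C_N\int_0^1(\theta+\xi)^{N-1}\hat w^2\,d\xi ,
\]
required for every affine $\hat w$ and every $\theta\ge0$. Because $\hat w$ is affine, $\hat w_\xi$ is the constant $\hat w(1)-\hat w(0)$, so the left-hand side is a rank-one quadratic form in the nodal vector $(\hat w(0),\hat w(1))$, while the right-hand side is a positive-definite weighted mass form. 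Hence $C_N$ may be taken as the supremum over $\theta\ge0$ of the largest generalized eigenvalue $\Lambda(\theta)$ of the pair (weighted stiffness, weighted mass) on the two-dimensional space of affine functions, and it remains only to show $\sup_{\theta\ge0}\Lambda(\theta)<\infty$.

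The function $\theta\mapsto\Lambda(\theta)$ is continuous on $[0,\infty)$: the entries of the two $2\times2$ matrices are the smooth moments $\int_0^1(\theta+\xi)^{N-1}\xi^p\,d\xi$, and the mass matrix remains positive definite since the weight is positive a.e.\ on $(0,1)$, so $\Lambda(\theta)$ stays finite even at the degenerate endpoint $\theta=0$. Thus $\Lambda$ is bounded on every compact subset of $[0,\infty)$. To control large $\theta$ I would divide both matrices by $\theta^{N-1}$; since $(1+\xi/\theta)^{N-1}\to1$ uniformly on $[0,1]$, the normalised stiffness and mass matrices converge to the classical ones $\left(\begin{smallmatrix}1&-1\\-1&1\end{smallmatrix}\right)$ and $\tfrac16\left(\begin{smallmatrix}2&1\\1&2\end{smallmatrix}\right)$, so $\Lambda(\theta)$ tends to the classical unweighted inverse constant $12$ as $\theta\to\infty$. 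A continuous function on $[0,\infty)$ with a finite limit at infinity is bounded, giving $C_N:=\sup_{\theta\ge0}\Lambda(\theta)<\infty$.

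The only delicate point is precisely this uniformity in $\theta$. The first element ($\theta=0$) sees the degeneration $x^{N-1}\to0$ at the origin, while the elements far from the origin ($\theta\to\infty$) must reproduce the standard inverse inequality; the rescaling collapses both regimes into the single one-parameter family $\Lambda(\theta)$, and the continuity-plus-limit argument shows that no blow-up occurs in between. I would expect this to be the main obstacle, whereas the assembly step and the use of \eqref{eq:beta} are routine.
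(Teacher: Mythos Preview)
Your proof is correct and follows exactly the approach the paper intends: the paper's own proof is simply the one-line remark that ``the proof is identical to that of the standard inverse estimate,'' and your localize--rescale--use-\eqref{eq:beta} argument is precisely that standard proof, with the weighted case handled by your uniform bound on $\Lambda(\theta)$. The only thing you have added beyond the paper is to spell out carefully why the degeneration of the weight at the origin ($\theta=0$) causes no trouble, which is a genuine point but is routinely absorbed into the phrase ``standard.''
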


\begin{proof}
\tn{The proof is identical to that of the} standard inverse estimate.  
\end{proof}

\begin{lemma}
\label{prop:interval}
Let ${w}\in C(\overline{I})$ be a piecewise quadratic function in $I$, that is, ${w}|_{I_{j}}\in \mathcal{P}_{2}(I_{j})$ $(j=1,\cdots,m)$.
Then, we have 
\begin{equation}
\label{eq:interval1}
\int_{I}x^{N-1}|\Pi_{h}{w} -{w}|~dx \le Ch^{2}\|x^{N-1}{w}_{xx}\|_{L^{1}(I)},
\end{equation}
where $C=C(\beta,N)>0$.
\end{lemma}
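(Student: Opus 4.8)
The plan is to work element by element, exploiting the fact that on each subinterval $I_j$ the function $w$ is an honest quadratic, so that the interpolation error can be written down \emph{exactly} rather than merely estimated. First I would decompose the integral as $\int_I x^{N-1}|\Pi_h w - w|\,dx = \sum_{j=1}^m \int_{I_j} x^{N-1}|\Pi_h w - w|\,dx$ and concentrate on a single $I_j$.

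On $I_j$, since $w|_{I_j}\in\mathcal{P}_2(I_j)$ its second derivative is a constant, which I denote $w''_j$. Because $\Pi_h w$ is by definition the linear function agreeing with $w$ at the endpoints $x_{j-1},x_j$, the difference $\Pi_h w - w$ is a quadratic vanishing at both endpoints with leading coefficient $-w''_j/2$; hence the exact identity
\[
(\Pi_h w - w)(x) = -\tfrac{1}{2} w''_j (x - x_{j-1})(x - x_j), \qquad x \in I_j .
\]
This is the one genuinely essential observation.

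Next I would use the elementary pointwise bound $|(x - x_{j-1})(x - x_j)| \le h_j^2/4 \le h^2/4$ on $I_j$ to obtain $|\Pi_h w - w| \le (h^2/8)|w''_j|$ there. Since $w''_j$ is constant and the weight $x^{N-1}\ge 0$, multiplying and integrating yields
\[
\int_{I_j} x^{N-1}|\Pi_h w - w|\,dx \le \frac{h^2}{8}\,|w''_j| \int_{I_j} x^{N-1}\,dx = \frac{h^2}{8}\int_{I_j} x^{N-1}|w_{xx}|\,dx ,
\]
because $|w_{xx}|=|w''_j|$ is constant on $I_j$. Summing over $j$ reproduces exactly \eqref{eq:interval1}, with $\|x^{N-1}w_{xx}\|_{L^1(I)}$ understood as the sum of the contributions of the smooth pieces.

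I do not expect a serious obstacle here; the only points requiring a little care are deriving the exact quadratic interpolation-error formula, and checking that the degeneracy of the weight $x^{N-1}$ at the origin on the first interval $I_1=(0,x_1)$ causes no trouble — it does not, since the factor $|(x-x_{j-1})(x-x_j)|$ is bounded uniformly and is simply pulled outside the weighted integral. In fact this argument gives the explicit constant $C=1/8$, independent of both $\beta$ and $N$; the stated dependence on $(\beta,N)$ is harmless but unnecessary for this particular estimate, and I would note that the quasi-uniformity assumption \eqref{eq:beta} is not actually invoked.
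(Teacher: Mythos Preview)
Your proof is correct and, in fact, cleaner than the paper's. The paper also works element by element, but splits into two cases: for $j\ge 2$ it invokes a Taylor-type bound $|\Pi_h w(x)-w(x)|\le Ch_j\int_{I_j}|w_{xx}|\,d\xi$ and then has to transfer the weight $x^{N-1}$ from the outer variable to the integration variable via the quasi-uniformity inequality $x/x_{j-1}\le 1+\beta$, which is why the stated constant depends on $(\beta,N)$; for $j=1$ (where $x_{j-1}=0$ and that trick fails) it computes both sides explicitly. Your argument exploits more directly the fact that $w_{xx}$ is \emph{constant} on each $I_j$: this lets you use the exact error formula $(\Pi_h w - w)(x)=-\tfrac12 w''_j(x-x_{j-1})(x-x_j)$ and pull $|w''_j|$ in and out of the weighted integral freely, so no weight-transfer step---and hence no quasi-uniformity---is needed, and the first interval requires no separate treatment. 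The payoff is a uniform argument with the explicit constant $C=1/8$; the paper's route would have the advantage of generalizing to smooth (non-quadratic) $w$, but for the stated lemma that generality is not used.
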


\begin{proof}
\tn{To} prove \eqref{eq:interval1}, \tn{it suffices to replace} $I=I_j$ \tn{with} $j=1,\ldots,m$. First, let $j\ge 2$. By Taylor's theorem, we \tn{can} write
\begin{equation}
\label{eq:ek1}
 |\Pi_h{w}(x)-{w}(x)|\le Ch_j\int_{I_j}|{w}_{xx}(\xi)|~d\xi\quad (x\in I_j).
\end{equation}
\tn{Referring to} \eqref{eq:beta}, we see \tn{that} 
\begin{equation}
\label{eq:beta00}
\frac{x}{x_{j-1}}=1+\frac{x-x_{j-1}}{x_{j-1}}\le 1+\frac{h_{j}}{h_{j-1}}\le 1+\beta \quad (x\in \bar{I}_j).
\end{equation}
Combining \tn{\eqref{eq:ek1} and \eqref{eq:beta00}}, we deduce \tn{that}
\[
x^{N-1} |\Pi_h{w}(x)-{w}(x)|\le C(1+\beta)^{N-1}h_j\int_{I_j}x^{N-1}|{w}_{xx}(\xi)|~d\xi\quad (x\in {I}_j).
\]
Integrating both sides, we obtain \eqref{eq:interval1} for $I=I_j$ and $j\ge 2$. We \tn{now} proceed to the case $m=1$. Setting ${w}(x) = ax^{2}+bx +c$ for $x \in I_{1}$, \tn{where $a$, $b$ and $c$ are constants}, we express 
$\Pi_{h}{w}(x)-{w}(x)=ax(h_{1}-x)$ for $x \in I_{1}$. Therefore, we \tn{directly obtain} 
\begin{align*}
 \int_{I_1} x^{N-1}|\Pi_h{w}(x)-{w}(x)|~dx&=\frac{|a|}{(N+1)(N+2)}h_{1}^{N+2},\\ 
h_{1}^{2}\|x^{N-1}{w}_{xx}\|_{L^{1}(I_{1})}&=\frac{2}{N}|a|h_{1}^{N+2},
\end{align*}
which \tn{implies} \eqref{eq:interval1} for $I=I_1$. 
\end{proof}

\begin{lemma} 
\label{prop:norm}
There exist constants $C=C(\beta,N)$ and $C'=C'(\beta,N)$~such that
\[
C'\vnorm{{w}}\le \|{w}\|\le C\vnorm{{w}}\qquad ({w}\in S_{h}).
\]
\end{lemma}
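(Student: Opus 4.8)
The plan is to establish the two-sided bound by working element-by-element and exploiting the explicit form of both norms as weighted sums. Recall that $\|w\|^2 = \sum_{j=1}^m \int_{I_j} x^{N-1} w^2\,dx$ while $\vnorm{w}^2 = \dual{w,w} = \sum_{i=0}^{m-1} w(x_i)^2 (1,\phi_i)$, where $(1,\phi_i)=\int_I x^{N-1}\phi_i\,dx$. Since $w\in S_h$ is piecewise linear, on each interval $I_j$ it is determined by its nodal values $w(x_{j-1})$ and $w(x_j)$, and the integral $\int_{I_j} x^{N-1} w^2\,dx$ is a quadratic form in these two nodal values. First I would compute, on a single interval $I_j$, both the local mass integral $\int_{I_j} x^{N-1} w^2\,dx$ and the contribution of the lumped quantity, reducing the whole statement to a comparison of two positive-definite quadratic forms in $(w(x_{j-1}), w(x_j))$ with matrix entries built from the weighted moments $\int_{I_j} x^{N-1}\,dx$, $\int_{I_j} x^{N-1}(x-x_{j-1})\,dx$, and so on.

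The key step is to show that the consistent local mass matrix and the diagonal (lumped) local matrix are spectrally equivalent with constants independent of $j$, $h$, and the position of $I_j$. For the ordinary (unweighted) $P_1$ mass matrix this is classical, the two forms differing only by a fixed factor; the subtlety here is the weight $x^{N-1}$, which varies across $I_j$ and can be very small near the origin. The essential observation, already available through \eqref{eq:beta00}, is that on each interval the ratio $x/x_{j-1}$ is bounded by $1+\beta$, so the weight $x^{N-1}$ is comparable to its value at either endpoint up to a constant depending only on $\beta$ and $N$; the interval $I_1$ adjacent to the origin must be handled separately by direct computation, since there $x_{j-1}=0$ and the ratio bound degenerates, but on $I_1$ the moments $\int_{I_1} x^{N-1}(\cdot)\,dx$ are explicit monomials in $h_1$ and the comparison of the two quadratic forms reduces to comparing fixed numerical constants depending on $N$.

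The main obstacle I anticipate is precisely this behaviour near $x=0$: the weight degeneracy means one cannot simply freeze the weight and invoke the standard equivalence, and one must verify that the lumped distribution of mass $(1,\phi_i)$ to the node $x_i$ still controls (and is controlled by) the true weighted $L^2$ energy on the adjacent intervals uniformly, including the first interval where only the node $x_0=0$ carries weight from the half-interval near the origin. Once the local equivalence constants are shown to be uniform in $j$, I would sum over $j$ (respectively over $i$) and use that each nodal value $w(x_i)$ and each interval $I_j$ appears a bounded number of times, so that the summed lumped form $\vnorm{w}^2$ and the summed exact form $\|w\|^2$ inherit the same two-sided bound with constants $C,C'$ depending only on $\beta$ and $N$. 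A convenient alternative for the obstacle is to relate $\vnorm{w}^2$ to $(\Pi_h(w^2),1)$ via Lemma \ref{prop:dual} and control the interpolation error of the piecewise-quadratic function $w^2$ using Lemma \ref{prop:interval}, which directly yields $|\vnorm{w}^2-\|w\|^2|\le Ch^2\|x^{N-1}(w^2)_{xx}\|_{L^1(I)}$ and hence, after an inverse estimate (Lemma \ref{prop:inverse}) to absorb the second-derivative term, the desired equivalence for $h$ small.
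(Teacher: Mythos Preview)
Your proposal is essentially the paper's proof, and in fact you have identified both ingredients the paper uses. The paper applies your ``alternative'' route (Lemmas~\ref{prop:dual} and~\ref{prop:interval} on the piecewise-quadratic $w^2$, followed by the inverse estimate Lemma~\ref{prop:inverse}) to obtain $|\vnorm{w}^2-\|w\|^2|\le C\|w\|^2$; note, however, that after the inverse estimate the factor $h^2$ cancels against $h^{-2}$, so the resulting constant is not small and this argument yields only the direction $\vnorm{w}\le C\|w\|$, not the full equivalence ``for $h$ small.'' For the reverse inequality $\|w\|\le C\vnorm{w}$ the paper follows exactly your main element-by-element strategy, bounding $\|w\|^2$ by nodal values and reducing to $\int_{x_{j-1}}^{x_{j+1}} x^{N-1}\,dx \le C\int_{x_{j-1}}^{x_{j+1}} x^{N-1}\phi_j\,dx$, which is handled via \eqref{eq:beta00} for $j\ge 1$ and by direct computation on $I_1$.
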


\begin{proof}
Let ${w}\in S_h$. 
\tn{To} prove the first inequality, \tn{we note that} ${w}^2$ is a piecewise quadratic function and $({w}^2)_{xx}=2({w}_x)^2$.
\tn{By} Lemmas \ref{prop:interval} and \ref{prop:inverse}, \tn{we get}
\begin{align*}
 |\dual{{w},{w}}-({w},{w})|
&\le Ch^2\|x^{N-1}({w}^2)_{xx}\|_{L^1(I)}\\
&\le Ch^2\|{w}_x\|^2\\
&\le C\|{w}\|^2,
\end{align*}
which implies the first inequality. 

\tn{To} prove the second inequality, \tn{we} estimate $\|{w}\|$ as 
\begin{align*}
\|{w}\|^{2} &\le \sum_{j=1}^{m}\left[ {w}(x_{j-1})^{2}\int_{x_{j-1}}^{x_{j}}x^{N-1}dx + {w}(x_{j})^{2}\int_{x_{j-1}}^{x_{j}}x^{N-1}dx\right]\\
 &= {w}(x_{0})^{2}\int_{x_{0}}^{x_{1}}x^{N-1}dx + \sum_{j=1}^{m-1}{w}(x_{j})^{2}\int_{x_{j-1}}^{x_{j+1}}x^{N-1}dx.
\end{align*}
\tn{We also} express $\vnorm{{w}}$ as
\[
\vnorm{{w}}^{2}={w}(x_{0})^{2}\int_{x_{0}}^{x_{1}}x^{N-1}\phi_{0}(x)~dx + \sum_{j=1}^{m-1}{w}(x_{j})^{2}\int_{x_{j-1}}^{x_{j+1}}x^{N-1}\phi_{j}(x)~dx.
\]

Therefore, it suffices to show that
\begin{subequations} 
\begin{align}
\label{eq:x0}
\int_{x_{0}}^{x_{1}}x^{N-1}dx 
&\le C_{1}\int_{x_{0}}^{x_{1}}x^{N-1}\phi_{0}~dx, \\
\label{eq:xj}
\int_{x_{j-1}}^{x_{j+1}}x^{N-1}dx 
&\le C_{2}\int_{x_{j-1}}^{x_{j+1}}x^{N-1}\phi_{j}~dx\quad (j=1,\ldots,m),
\end{align} 
\end{subequations}
where $C_{1}=C_{1}(N)>0$ and $C_{2}=C_{2}(N)>0$.    
 
Equation \eqref{eq:xj} is \tn{directly} verified using \eqref{eq:beta00}. 
Equation \eqref{eq:x0} is obtained \tn{by the change-of-variables technique}, setting $\xi=x/h_1$.    
\end{proof}

We here introduce \tn{two} auxiliary problems. 
Given $n\ge 0$, $g_h^n\in S_h$ and $u_h^n\in S_h$, we \tn{seek} $u_{h}^{n+1} \in S_{h}$ such that 
\begin{equation}
\label{eq:ml1} 
\dual{\partial_{\tau_n}u_h^{n+1},\chi} + A(u_{h}^{n+1},\chi)=\dual{g_h^n,\chi}\qquad 
(\chi \in S_{h}),
\end{equation} 
and 
\begin{equation}
\label{eq:ml2} 
\dual{\partial_{\tau_n}u_h^{n+1},\chi} + A(u_{h}^{n},\chi)=\dual{g_h^n,\chi}\qquad 
(\chi \in S_{h}).
\end{equation} 

\begin{lemma}
 \label{prop:comp1} 
Suppose that $n\ge 0$ and $u^n_h,g_h^n\in S_h$ are given.
Then, \tn{problem} \eqref{eq:ml1} admits a unique solution $u_h^{n+1}\in S_h$ and it satisfies 
\begin{equation}
 \label{eq:infest}
\|u_h^{n+1}\|_{L^\infty(I)}\le 
\|u_h^n\|_{L^\infty(I)}+ \tau_n \|g_h^n\|_{L^\infty(I)}.  
\end{equation}
\tn{Problem} \eqref{eq:ml2} \tn{also} admits a unique solution $u_h^{n+1}\in S_h$ \tn{that satisfies} \eqref{eq:infest} under \tn{condition} \eqref{eq:tau1}.  
\end{lemma}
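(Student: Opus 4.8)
The plan is to recast both auxiliary problems as finite linear systems for the nodal values $U_i=u_h^{n+1}(x_i)$, $i=0,\ldots,m-1$ (with $U_m=0$ forced by $S_h\subset\dot H^1$), and to read the bound \eqref{eq:infest} off the sign structure of the coefficient matrices, i.e.\ via a discrete maximum principle. Taking $\chi=\phi_i$ in \eqref{eq:ml1} and using $\dual{w,\phi_i}=w(x_i)(1,\phi_i)$, which is immediate from \eqref{eq:lump}, the lumped-mass term becomes diagonal with strictly positive entries $(1,\phi_i)$, so \eqref{eq:ml1} reads $\left(\tfrac{1}{\tau_n}D+L\right)U^{n+1}=\tfrac{1}{\tau_n}DU^n+Dg^n$, where $D=\mathrm{diag}((1,\phi_i))$, $L=(A(\phi_i,\phi_j))$, and $U^n,g^n$ collect the nodal values of $u_h^n,g_h^n$. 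Existence and uniqueness are immediate: $\tfrac{1}{\tau_n}\dual{\cdot,\cdot}+A(\cdot,\cdot)$ is positive definite on $S_h$ (the first term because every $(1,\phi_i)>0$, the second semidefinite), so the matrix is nonsingular; the same holds for \eqref{eq:ml2}, where $u_h^{n+1}$ enters only through $D$.

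The core is the $L^\infty$ bound. First I would record two sign properties of $L$: $A(\phi_i,\phi_j)\le 0$ for $i\ne j$ (on each shared subinterval the two hat gradients have opposite signs while $x^{N-1}>0$), and the nonnegative-row-sum identity
\[
\sum_{j=0}^{m-1}A(\phi_i,\phi_j)=A\!\left(\phi_i,\,1-\phi_m\right)=-A(\phi_i,\phi_m)\ge 0,
\]
using $\sum_{j=0}^{m}\phi_j\equiv 1$, $A(\phi_i,1)=0$, and $A(\phi_i,\phi_m)\le 0$. For \eqref{eq:ml1} I then run the standard argument: if $\max_i U_i^{n+1}\le 0$ the upper bound is trivial since the right side of \eqref{eq:infest} is nonnegative; otherwise, at a node $i^\ast$ realizing a positive maximum, the off-diagonal relations $L_{i^\ast j}U_j^{n+1}\ge L_{i^\ast j}U_{i^\ast}^{n+1}$ give $\sum_j L_{i^\ast j}U_j^{n+1}\ge\big(\sum_j L_{i^\ast j}\big)U_{i^\ast}^{n+1}\ge 0$; dropping this term in the $i^\ast$-th equation and dividing by $\tfrac{1}{\tau_n}(1,\phi_{i^\ast})>0$ yields $U_{i^\ast}^{n+1}\le U_{i^\ast}^n+\tau_n g_{i^\ast}^n$. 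Applying the same reasoning to $-u_h^{n+1}$ (a solution of \eqref{eq:ml1} with data $-u_h^n,-g_h^n$) controls the minimum, and together they give \eqref{eq:infest} after noting $\|u_h^{n+1}\|_{L^\infty(I)}=\max_i|U_i^{n+1}|$ for a piecewise-linear function.

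For \eqref{eq:ml2} the estimate is a direct pointwise recursion rather than a maximum-principle argument. Solving the $i$-th equation for $U_i^{n+1}$ gives
\[
U_i^{n+1}=\left(1-\frac{\tau_n L_{ii}}{(1,\phi_i)}\right)U_i^n-\frac{\tau_n}{(1,\phi_i)}\sum_{j\ne i}L_{ij}U_j^n+\tau_n g_i^n.
\]
Here condition \eqref{eq:tau1}, namely $\tau\le\min_i (1,\phi_i)/A(\phi_i,\phi_i)$, is precisely what forces the diagonal coefficient $1-\tau_nL_{ii}/(1,\phi_i)$ to be nonnegative; the off-diagonal coefficients $-\tau_nL_{ij}/(1,\phi_i)$ are nonnegative by the sign property, and their total is $1-\tfrac{\tau_n}{(1,\phi_i)}\sum_j L_{ij}\le 1$ by the row-sum property. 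Thus $U_i^{n+1}$ is a subconvex combination of the $U_j^n$ plus $\tau_n g_i^n$, and the triangle inequality gives $|U_i^{n+1}|\le\max_k|U_k^n|+\tau_n\max_k|g_k^n|$, i.e.\ \eqref{eq:infest}.

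I expect the only genuinely delicate step to be the verification of the sign structure of $L$, in particular the nonnegative-row-sum property, where one must treat the excluded boundary node $x_m$ correctly (this is why the sum runs only to $m-1$ and leaves the residual $-A(\phi_i,\phi_m)$) and check the behaviour of the degenerate weight $x^{N-1}$ near the origin $x_0=0$. Everything downstream of these sign facts is routine.
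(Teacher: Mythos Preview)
Your proposal is correct and follows essentially the same approach as the paper: both reduce \eqref{eq:ml1} and \eqref{eq:ml2} to nodal equations via $\chi=\phi_i$ and read off \eqref{eq:infest} from the sign structure of the stiffness coefficients (the paper writes the tridiagonal system explicitly and then simply invokes a ``standard argument'' for the discrete maximum principle, whereas you spell out that argument in full). Your treatment of the row-sum identity $\sum_{j=0}^{m-1}A(\phi_i,\phi_j)=-A(\phi_i,\phi_m)\ge 0$, which handles the boundary node $x_m$, is exactly the point the paper leaves implicit; the degenerate weight $x^{N-1}$ at the origin causes no additional difficulty beyond ensuring $(1,\phi_0)>0$, which is clear.
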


\begin{proof}
The unique existence of \tn{the} solution of \eqref{eq:ml1} \tn{can be} verified \tn{by a} standard \tn{approach (see Theorems 3.1, 3.2 in \cite{ns20})}. 
Substituting $\chi=\phi_i$, $i=0,\ldots,m-1$, \tn{in} \eqref{eq:ml1}, we have 
\[
  \frac{\tau_n a_{i,i-1}}{m_i}u_{i-1}^{n+1}
+\left(1+\frac{\tau_n a_{i,i}}{m_i}\right)u_{i}^{n+1}
+\frac{\tau_n a_{i,i+1}}{m_i}u_{i+1}^{n+1}
=u_i^n+\tau_ng_i^n,
\]
where  $u_i^n=u_h^n(x_i)$, $g_i^n=g_h^n(x_i)$, $a_{i,j}=A(\phi_j,\phi_i)$ and $m_i=(1,\phi_i)$. 
Therein, we should understand \tn{that} $a_{0,-1}=0$, $m_0=1$ and $u_{-1}^{n+1}=1$. 
Moreover, 
substituting $\chi=\phi_i$ \tn{in} \eqref{eq:ml2}, we get 
\[
  u_i^{n+1}=
-\frac{\tau_n a_{i,i-1}}{m_i}u_{i-1}^{n}
+\left(1-\frac{\tau_n a_{i,i}}{m_i}\right)u_{i}^{n}
-\frac{\tau_n a_{i,i+1}}{m_i}u_{i+1}^{n}
+\tau_ng_i^n.
\]
\tn{From these} expressions, \eqref{eq:infest} \tn{is deduced by a} standard argument. 
%
\end{proof}

\section{Proofs of Theorems \ref{th:ml1-1}, \ref{th:ml1-2} and \ref{th:ml2-1}}
\label{sec:42}

\begin{proof}[Proof of Theorem \ref{th:ml1-1} using Theorem \ref{th:ml1-2}]
\tn{This proof is} divided into \tn{the following} two steps:

\noindent \emph{Step 1.}We prove Theorem \ref{th:ml1-1} under \tn{an} additional assumption\tn{:} $f$ is a globally Lipschitz function. That is, we assume
\begin{equation}
\label{eq:f1}
M=\sup_{\begin{subarray}{c}s,s'\in\mathbb{R} \\ s\ne s'\end{subarray}}\frac{|f(s)-f(s')|}{|s-s'|}<\infty.
\end{equation}
Using $P_Au$, we divide the error into the form 
\begin{equation}
\label{eq:pa}
u_{h}^{n}-u(\cdot,t_{n})=\underbrace{(u_{h}^{n}- P_{A}u(\cdot,t_{n}))}_{=v_{h}^{n}} + \underbrace{(P_{A}u(\cdot,t_{n}) - u(\cdot,t_{n}))}_{=w^{n}}. 
\end{equation}
\tn{From \eqref{eq:tj1},} we know that
\begin{equation}
\label{eq:th1.12}
\|w^{n}\| \le Ch^{2}\|u_{xx}(t_n)\| \le Ch^2 \|u_{xx}\|_{L^\infty(Q_T)}
\end{equation}
and that $\partial_{\tau_{n}}P_{A}v = P_{A}\partial_{\tau_{n}}v$ for $v\in C(\overline{I})$.

We \tn{now estimate} $v_{h}^n$. 
Using the weak form \eqref{eq:w1} at $t=t_{n+1}$, \tn{scheme} (ML--1), and the property of $P_{A}$, we obtain 
\begin{equation}
 \dual{\partial_{\tau_{n}}v_{h}^{n+1},\chi} + A(v_{h}^{n+1},\chi)=
(\textup{I}+\textup{II}+\textup{III}+\textup{IV}+\textup{V})(\chi)\quad (\chi\in S_h),
\label{eq:tmp1}
\end{equation}
where
\begin{align*}
\textup{I}(\chi) &= (f(u_{h}^{n}),\chi)-(f(u(\cdot,t_{n})),\chi),\\
\textup{II}(\chi)  &= (u_{t}(t_{n+1}),\chi)-(\partial_{\tau_{n}}u(\cdot,t_{n+1}),\chi),\\
\textup{III}(\chi)  &= (f(u(\cdot,t_{n})),\chi) - (f(u(\cdot,t_{n+1})),\chi),\\
\textup{IV}(\chi) &= (\partial_{\tau_{n}}u(\cdot,t_{n+1}),\chi)-(P_{A}\partial_{\tau_{n}}u(\cdot,t_{n+1}),\chi)=(\partial_{\tau_{n}}w^{n+1},\chi),\\
\textup{V} (\chi)& = (\partial_{\tau_{n}}P_{A}u(\cdot,t_{n+1}),\chi) - \dual{\partial_{\tau_{n}}P_{A}u(\cdot,t_{n+1}),\chi} .
\end{align*}
\tn{The estimations of} $\textup{I}$--$\textup{IV}$ are straightforward. That is, we have 
\begin{align*}
 |\textup{I}(\chi)| & \le  M(\|w^{n}\|+ \|v_{h}^{n}\|)\cdot\|\chi\|,\\
 |\textup{II}(\chi)| & \le \tau_{n} \|u_{tt}\|_{L^{\infty}(Q_T)}  \|\chi \|,\\
 |\textup{III}(\chi)| & \le  \tau_{n}M\|u_{t}\|_{L^{\infty}(Q_T)}\|\chi\|,\\
 |\textup{IV}(\chi)| & \le Ch^{2}\|u_{txx}\|_{L^{\infty}(Q_T)}\|\chi\|. 
\end{align*}

To \tn{estimate} $\textup{V}$, we use Lemmas 
\ref{prop:dual} and \ref{prop:interval}. 
\tn{Lemma \ref{prop:interval} is applicable because} $\partial_{\tau_{n}}P_{A}u(\cdot,t_{n+1})\chi$ is a piecewise quadratic function. 
That is, 
\begin{align*}
|\textup{V}(\chi)|  &=(1, \Pi_{h} (\partial_{\tau_{n}}P_{A}u(\cdot,t_{n+1})\chi)-\partial_{\tau_{n}}P_{A}u(\cdot,t_{n+1})\chi) \\
&\le  Ch^{2}\|x^{N-1}\{\partial_{\tau_{n}}P_{A}u(\cdot,t_{n+1})\chi\}_{xx}\|_{L^{1}(I)}\\
&\le Ch^{2}\|x^{N-1}(P_{A}\partial_{\tau_{n}}u(\cdot,t_{n+1}))_x\cdot \chi_x\|_{L^{1}(I)}\\
&\le Ch^{2}\|(P_{A}\partial_{\tau_{n}}u(\cdot,t_{n+1}))_x\|\cdot\|\chi_x\|\\
&\le Ch^{2}\|(\partial_{\tau_{n}}u(\cdot,t_{n+1}))_x\|\cdot\|\chi_x\|\\
&\le Ch^{2}\|u_{tx}\|_{L^\infty(Q_T)}\|\chi_x\|.
\end{align*}

\tn{Substituting} $\chi=v_{h}^{n+1}$ \tn{in} \eqref{eq:tmp1} gives 
\begin{multline*}
 \frac{1}{2\tau_n}\left(
\vnorm{v_{h}^{n+1}}^2-\vnorm{v_{h}^{n}}^2
\right)+\|(v_{h}^{n+1})_x\|^2 \le  \\
C\vnorm{v_{h}^n}\cdot \|(v_{h}^{n+1})_x\| + 
C(h^2+\tau_n)\kappa(u)\|(v_{h}^{n+1})_x\| .
\end{multline*}
\tn{Herein,} we have used Lemma \ref{prop:norm} and the Poincar{\' e} inequality (\tn{Lemma 18.1 in \cite{tho06}}). \tn{By} Young's inequality, \tn{we then deduce that} 
\begin{equation*}
\frac{1}{\tau_n}\left(
\vnorm{v_{h}^{n+1}}^2-\vnorm{v_{h}^{n}}^2
\right)\le 
C\vnorm{v_{h}^{n}}^2+C(h^2+\tau_n)^2\kappa(u)^2.
\end{equation*}
Therefore, 
\[
 \vnorm{v_{h}^{n}}^2\le e^{CT}\vnorm{v_{h}^{0}}^2+C(\exp(CT)-1)(h^2+\tau_n)^2\kappa(u)^2,
\]
which completes the proof.  

\smallskip

\noindent \emph{Step 2.} 
Let ${r}=1+\|u\|_{L^\infty(Q_T)}$. 
Consider \eqref{eq:1} and (ML--1) with replacement $f(s)$ in:
\[
\tilde{f}(s)=
\begin{cases}
f(r)& (s\ge {r})\\ 
f(s) & (|s|\le{r})\\
-f(-r) & (s\le -{r}). 
\end{cases}
\]
The function $\tilde{f}$ is a locally Lipschitz function \tn{satisfying} 
\[
 M
=
\sup_{\begin{subarray}{c}s,s'\in\mathbb{R} \\ s\ne s'\end{subarray}}
\frac{|\tilde{f}(s)-\tilde{f}(s')|}{|s-s'|}
=
\sup_{\begin{subarray}{c}|s|,|s'|\le r \\ s\ne s'\end{subarray}}
\frac{|{f}(s)-{f}(s')|}{|s-s'|}.
\]
Let $\tilde{u}$ and $\tilde{u}_{h}^{n}$ be the solutions of \eqref{eq:1} and (ML-1) with $\tilde{f}$, respectively.
Applying Step 1 and Theorem \ref{th:ml1-2} to $\tilde{u}$ and $\tilde{u}_{h}^{n}$, 
we obtain 
\begin{subequations} 
 \begin{align}
\sup_{0\le t_n\le T}\|\tilde{u}_{h}^{n}-\tilde{u}(\cdot,t_{n})\| &\le C(h^{2}+\tau),\label{eq:ml1.1} \\
\sup_{0\le t_n\le T}\|\tilde{u}_{h}^{n}-\tilde{u}(\cdot,t_{n})\|_{L^{\infty}(I)}& \le C\left(h+h^{2}\log\frac{1}{h}+\tau\right).
\label{eq:ml1.2}
\end{align}
\end{subequations}
By the definition of ${r}$ and the uniqueness of the solution of \eqref{eq:1}, we know \tn{that} $u=\tilde{u}$ in $Q_T$. \tn{For} sufficiently small $h$ and $\tau$, \tn{we have} 
\[
C\left(h+h^{2}\log\frac{1}{h}+\tau\right)\le 1.
\]
Consequently, $\|\tilde{u}_{h}^{n}\|_{L^{\infty}(I)}\le {r}$ for $0\le t_n\le T$ and, by the uniqueness of \tn{the} solution of (ML--1),  we have $u_h^n=\tilde{u}_h^n$. 
Therefore, \eqref{eq:ml1.1} implies the desired \tn{result}.  
\end{proof}


We \tn{now} proceed to the proof of Theorem \ref{th:ml1-2}.

\begin{proof}[Proof of Theorem \ref{th:ml1-2}]
\tn{The} notation is \tn{that of} the previous proof. 
It suffices to prove Theorem \ref{th:ml1-2} under assumption \eqref{eq:f1}\tn{, which is generalizable to an arbitrary} $f$ as \tn{demonstrate in} the previous proof.  
\tn{By} \eqref{eq:tj2}, \tn{we have} 
$\|v_h^0\|_{L^\infty(I)}\le Ch^2\log(1/h)\kappa(u)$ and 
$\|w^n\|_{L^\infty(I)}\le Ch^2\log(1/h)\kappa(u)$ for $0\le t_n\le T$.  
Therefore, it remains to \tn{estimate} $v_h^n$ when $0< t_n\le T$. 
Setting 
\[
G_h^n=\sum_{i=0}^{m-1}G_i^n\phi_i,\quad 
 G_i^n=\frac{(\textup{I}+\textup{II}+\textup{III}+\textup{IV}+\textup{V})(\phi_i)}{(1,\phi_i)},
\]
we rewrite \eqref{eq:tmp1} as 
\begin{equation*}
 \dual{\partial_{\tau_{n}}v_{h}^{n+1},\chi} + A(v_{h}^{n+1},\chi)=
\dual{G_h^n,\chi}\quad (\chi\in S_h).
\end{equation*}

\tn{Showing that} 
\begin{equation}
 \|G_h^n\|_{L^\infty(I)}\le 
M\|v_h^n\|_{L^\infty(I)}+
C\left(h+\tau\right)\kappa(u),
\label{eq:G}
\end{equation}
we can apply Lemma \ref{prop:comp1} to obtain 
\[
 \|v_h^{n+1}\|_{L^\infty(I)}\le (1+M\tau_n)\|v_h^{n}\|_{L^\infty(I)} + \tau_n\cdot C(h+\tau)\kappa(u),
\] 
and, consequently, 
\[
 \|v_h^n\|_{L^\infty(I)} \le e^{Mt_n} \|v_h^0\|_{L^\infty(I)} 
+\frac{e^{Mt_n}-1}{M}C(h+\tau)\kappa(u).
\]
\tn{Thereby}, we deduce the desired estimate. 

Below we prove \tn{the truth of} \eqref{eq:G}. 
Recall that we \tn{assumed} global Lipschitz continuity \eqref{eq:f1} on $f$. 

$\textup{I}(\phi_i)$--$\textup{IV}(\phi_i)$ are \tn{straightforwardly estimated as follows:} 
\begin{align*}
 |\textup{I}(\phi_i)| & \le  M[\|v_{h}^{n}\|_{L^\infty(I)}+Ch^2\log(1/h)\kappa(u)]\cdot(1,\phi_i),\\
 |\textup{II}(\phi_i)| & \le \tau_{n} \kappa(u) (1,\phi_i),\\
 |\textup{III}(\phi_i)| & \le  M \tau_{n} \kappa(u) (1,\phi_i),\\
 |\textup{IV}(\phi_i)| & \le Ch^{2}\log(1/h) \kappa(u) (1,\phi_i). 
\end{align*}

To \tn{estimate} $\textup{V}(\phi_i)$, we write
\[
  \textup{V}(\phi_i)=\textup{V}_1(\phi_i)+\textup{V}_2(\phi_i)+\textup{V}_3(\phi_i),
\]
where 
\begin{align*}
\textup{V}_1(\phi_i)&=(\partial_{\tau_{n}}P_{A}u(\cdot,t_{n+1}),\phi_i) -(\partial_{\tau_{n}}u(\cdot,t_{n+1}),\phi_i), \\
\textup{V}_2(\phi_i)&=(\partial_{\tau_{n}}u(\cdot,t_{n+1}),\phi_i) - \dual{\partial_{\tau_{n}}u(\cdot,t_{n+1}),\phi_i},\\
\textup{V}_3(\phi_i)&=\dual{\partial_{\tau_{n}}u(\cdot,t_{n+1}),\phi_i}- \dual{\partial_{\tau_{n}}P_{A}u(\cdot,t_{n+1}),\phi_i}.
\end{align*}
\tn{The above terms are respectively estimated} as: 
\begin{align*}
 |\textup{V}_1(\phi_i)|
&\le \| P_{A}(\partial_{\tau_{n}}u(\cdot,t_{n+1}))-\partial_{\tau_{n}}u(\cdot,t_{n+1})\|_\infty (1,\phi_i)\\
&\le Ch^2\log(1/h)\kappa(u) (1,\phi_i);\\
 |\textup{V}_2(\phi_i)|
&\le 
\int_{I} x^{N-1}\left|\partial_{\tau_{n}}u(x,t_{n+1})-\partial_{\tau_{n}}u(x_i,t_{n+1})\right| \phi_i(x)~dx\\
&\le Ch\kappa(u) (1,\phi_i);\\
|\textup{V}_3(\phi_i)|
&\le | \partial_{\tau_{n}}u(x_i,t_{n+1})-P_{A}\partial_{\tau_{n}}u(x_i,t_{n+1})| (1,\phi_i)\\
&\le Ch^2\log(1/h)\kappa(u) (1,\phi_i).
\end{align*}
\tn{We thereby} deduce \tn{that} 
\[
 \|G_h^n\|_{L^\infty(I)}\le M\|v_h\|_{L^\infty(I)}+C\left(h+h^2\log(1/h)+\tau\right)\kappa(u)
\]
which implies \eqref{eq:G}. This \tn{step} completes the proof. 
\end{proof}

\begin{proof}[Proof of Theorem \ref{th:ml2-1}]
\tn{The proof is identical to} that of Theorem \ref{th:ml1-2}. 
\end{proof}


\section{Blow-up analysis}
\label{sec:5}

\subsection{Results}

\tn{This} section \tn{considers the} spacial nonlinearlity
\[
 f(s)=s|s|^{\alpha},\quad \alpha>0.
\]
\tn{As we are} interested in non-negative \tn{solutions}, we assume that 
\begin{equation}
\label{eq:ivp}
 u^0\ge 0,\not\equiv 0,\quad u_h^0\ge 0,\not\equiv 0.
\end{equation}
\tn{Therefore}, the solution $u$ of \eqref{eq:1} is non-negative and the solution $u_h^n$ of {(ML--2)} is also non-negative under condition \eqref{eq:tau1}. 
Generally, the solution of \eqref{eq:1} blows up \tn{when the} initial data $u_0$ \tn{are sufficiently large}, and the blow up is controlled by the energy functional associated with \eqref{eq:1}. 
Herein, we study whether or not the numerical solution behaves similarly by initially defining some properties of the solution $u$ of \eqref{eq:1}.
In particular, we see that {(ML--2)} is suitable for this purpose. 

\tn{The} energy functionals associated with \eqref{eq:1} \tn{are defined} as 
\begin{align*}
K(v)&=\frac{1}{2}\|v_{x}\|^{2}-\frac{1}{\alpha+2}\int_{I}x^{N-1}|v(x)|^{\alpha+2}~dx,\\
I(v)&=\int_{I}x^{N-1}v(x)\psi(x)~dx,
\end{align*}
where $\psi\in\dot{H}^{1}$ denotes the eigenfunction associated with the first eigenvalue $\mu>0$ 
of the eigenvalue problem
\begin{equation}
 \label{eq:evp}
A(\psi,\chi)=\mu(\psi,\chi)\quad (\chi\in \dot{H}^{1}). 
\end{equation}
Without loss of generality, we assume that~$\psi\ge0~\mbox{in}~I~\mbox{and}~\int_{I}x^{N-1}\psi(x)~dx=1$.

\tn{The following propositions} \ref{prop:5.1}, \ref{prop:5.2}, and \ref{prop:5.3} are \tn{often applied to the} semilinear heat equation in a bounded domain. \tn{They are easily extended to the} radially symmetric case.  

\begin{prop}
$K(u(t))$ is a non-increasing function of $t$, where $u$ is the solution of \eqref{eq:1}.  
\label{prop:5.1}
\end{prop}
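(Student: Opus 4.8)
The plan is to show that $K(u(t))$ is non-increasing by computing $\frac{d}{dt}K(u(t))$ and identifying it with a non-positive quantity. First I would differentiate the energy functional term by term. For the gradient term, $\frac{d}{dt}\frac{1}{2}\|u_x(t)\|^2 = A(u_t, u)$ by definition of $A(\cdot,\cdot)$ in \eqref{eq:4} (using symmetry of the bilinear form), and for the nonlinear term $\frac{d}{dt}\frac{1}{\alpha+2}\int_I x^{N-1}|u|^{\alpha+2}\,dx = \int_I x^{N-1}|u|^{\alpha}u\,u_t\,dx = (f(u), u_t)$, since $\frac{d}{ds}|s|^{\alpha+2} = (\alpha+2)s|s|^{\alpha}$ and $f(s)=s|s|^{\alpha}$. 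Thus $\frac{d}{dt}K(u(t)) = A(u_t,u) - (f(u),u_t)$.

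The key step is then to recognize the right-hand side as $-\|u_t(t)\|^2$ by invoking the weak formulation \eqref{eq:w1}. Taking the test function $\chi = u_t(\cdot,t)$ in \eqref{eq:w1} gives $(u_t,u_t) + A(u,u_t) = (f(u),u_t)$, which rearranges to $A(u_t,u) - (f(u),u_t) = -(u_t,u_t) = -\|u_t(t)\|^2 \le 0$. Therefore $\frac{d}{dt}K(u(t)) = -\|u_t(t)\|^2 \le 0$, establishing that $K(u(t))$ is non-increasing. I would write this as a short displayed computation, being careful to use the symmetry $A(u_t,u)=A(u,u_t)$ implicitly so that the substitution into \eqref{eq:w1} lines up cleanly.

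The main obstacle here is not the formal computation, which is routine, but ensuring the differentiation under the integral sign and the interchange of $\frac{d}{dt}$ with the spatial integrals are justified. This requires enough regularity of $u$ in time and space so that $u_t \in \dot{H}^1$ for each $t$ and the integrands are dominated appropriately on $Q_T$; the smoothness hypothesis \eqref{eq:smooth1} (or the standard parabolic regularity for a classical solution of \eqref{eq:1}) supplies exactly this, so I would remark briefly that these operations are legitimate for the smooth solution under consideration and not dwell on it. Since the proposition is stated as being classical for the semilinear heat equation in a bounded domain and merely transferred to the radially symmetric (weighted) setting, I expect the proof to be quite short, with the only genuine content being the clean identification of $\frac{d}{dt}K(u(t))$ with $-\|u_t\|^2$ via the weak form.
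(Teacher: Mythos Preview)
Your argument is correct and is exactly the standard energy-decay computation: differentiate $K(u(t))$ term by term, substitute $\chi=u_t$ into \eqref{eq:w1}, and obtain $\frac{d}{dt}K(u(t))=-\|u_t(t)\|^2\le 0$. The paper itself does not supply a proof of Proposition~\ref{prop:5.1}; it simply remarks that Propositions~\ref{prop:5.1}--\ref{prop:5.3} are classical for the semilinear heat equation in a bounded domain and carry over to the radially symmetric (weighted) setting, so your write-up fills in precisely what the authors leave implicit.
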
   

\begin{prop}
\label{prop:5.2}
Suppose that $u^0\ge 0, \not\equiv 0$ and $u$ is the solution of \eqref{eq:1}. Then, the following statements are equivalent: 
\begin{itemize}
 \item[\textup{(i)}] There exists $T_{\infty}>0$ such that $u$ blows up at $t=T_{\infty}$ in the sense that $\displaystyle{\lim_{t\to T_{\infty}}\|u(\cdot,t)\|=\infty}$.
 \item[\textup{(ii)}] There exists $t_{0}\ge 0$ such that $K(u(\cdot,t_{0}))<0$.
\end{itemize}
\end{prop}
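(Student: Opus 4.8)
The plan is to prove the equivalence by establishing the two implications separately, using throughout two identities obtained from the weak formulation \eqref{eq:w1}. Taking $\chi=u(\cdot,t)$ in \eqref{eq:w1} gives the first-variation identity
$\tfrac12\tfrac{d}{dt}\|u\|^{2}=-\|u_x\|^{2}+\int_I x^{N-1}|u|^{\alpha+2}\,dx$,
while taking $\chi=u_t(\cdot,t)$ gives the energy-dissipation identity $\tfrac{d}{dt}K(u(t))=-\|u_t\|^{2}$ that already underlies Proposition \ref{prop:5.1}. Writing $\Phi(t)=\|u(\cdot,t)\|^{2}$, both directions reduce to controlling $\Phi$ by these identities together with the monotonicity of $K$.

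For the implication (ii) $\Rightarrow$ (i), I would substitute $\|u_x\|^{2}=2K(u)+\frac{2}{\alpha+2}\int_I x^{N-1}|u|^{\alpha+2}\,dx$ (the definition of $K$) into the first-variation identity to obtain $\tfrac12\Phi'(t)=-2K(u(t))+\frac{\alpha}{\alpha+2}\int_I x^{N-1}|u|^{\alpha+2}\,dx$. Since $K$ is non-increasing by Proposition \ref{prop:5.1} and $K(u(t_0))<0$, for $t\ge t_0$ the term $-2K(u(t))$ is positive and may be dropped, leaving $\Phi'(t)\ge\frac{2\alpha}{\alpha+2}\int_I x^{N-1}|u|^{\alpha+2}\,dx$. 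The decisive step is then a H\"older inequality on the finite-measure space $(I,x^{N-1}\,dx)$, whose total mass is $\int_0^1 x^{N-1}\,dx=1/N$: with exponents $\frac{\alpha+2}{2}$ and $\frac{\alpha+2}{\alpha}$ one gets $\|u\|^{2}\le (1/N)^{\alpha/(\alpha+2)}\bigl(\int_I x^{N-1}|u|^{\alpha+2}\,dx\bigr)^{2/(\alpha+2)}$, equivalently $\int_I x^{N-1}|u|^{\alpha+2}\,dx\ge N^{\alpha/2}\,\Phi^{(\alpha+2)/2}$. Substituting yields the closed differential inequality $\Phi'(t)\ge c\,\Phi(t)^{(\alpha+2)/2}$ with $c=\frac{2\alpha}{\alpha+2}N^{\alpha/2}>0$ and super-linear exponent $(\alpha+2)/2>1$. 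Because $K(u(t_0))<0$ forces $\Phi(t_0)>0$, integrating this inequality shows that $\Phi$ blows up at a finite time $T_\infty\le t_0+\frac{2}{\alpha c}\Phi(t_0)^{-\alpha/2}$, which is precisely statement (i).

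For the implication (i) $\Rightarrow$ (ii) I would argue by contraposition: assume $K(u(t))\ge 0$ on the whole interval of existence and rule out finite-time blow-up of $\|u\|$. Integrating the energy-dissipation identity gives $\int_0^t\|u_t\|^{2}\,ds=K(u(0))-K(u(t))\le K(u(0))$. Applying the Cauchy--Schwarz inequality to $\tfrac12\tfrac{d}{dt}\|u\|^{2}=(u_t,u)$ yields $\bigl|\tfrac{d}{dt}\|u(\cdot,t)\|\bigr|\le\|u_t\|$, and integrating, again with Cauchy--Schwarz in time, gives $\|u(\cdot,t)\|\le\|u(\cdot,0)\|+\sqrt{t\,K(u(0))}$. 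This is finite for every finite $t$, which contradicts $\lim_{t\to T_\infty}\|u(\cdot,t)\|=\infty$; hence blow-up forces $K(u(t_0))<0$ for some $t_0\ge 0$.

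The main obstacle is the (ii) $\Rightarrow$ (i) direction, specifically closing the energy estimate into a self-contained differential inequality for $\Phi=\|u\|^{2}$: the H\"older step must be carried out in the weighted space $x^{N-1}\,dx$ with the correct total mass $1/N$ so that the constant $N^{\alpha/2}$ and the super-linear exponent $(\alpha+2)/2$ emerge correctly and independently of the particular solution. Once this inequality is secured, the blow-up follows from a routine ODE comparison; the reverse implication is comparatively soft, resting only on the a priori bound $\int_0^t\|u_t\|^{2}\,ds\le K(u(0))$ and the Cauchy--Schwarz control of the growth of $\|u\|$.
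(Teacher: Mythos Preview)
The paper does not actually supply a proof of Proposition~\ref{prop:5.2}; it simply records it as a standard fact for semilinear heat equations that extends to the radially symmetric setting. Your argument is correct and is in fact the continuous counterpart of what the paper carries out discretely: your (ii)$\Rightarrow$(i) computation (test with $\chi=u$, eliminate $\|u_x\|^{2}$ via the definition of $K$, then apply the weighted H\"older inequality on $(I,x^{N-1}\,dx)$ with total mass $1/N$) mirrors Lemma~\ref{prop:5.4} line for line and recovers exactly the blow-up bound $T_{\infty}\le t_{0}+\frac{\alpha+2}{\alpha^{2}}N^{-\alpha/2}\|u(\cdot,t_{0})\|^{-\alpha}$ stated in the Remark following Proposition~\ref{prop:5.3}. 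Your (i)$\Rightarrow$(ii) direction, via the dissipation bound $\int_{0}^{t}\|u_{t}\|^{2}\,ds\le K(u(0))$ and the resulting estimate $\|u(\cdot,t)\|\le\|u(\cdot,0)\|+\sqrt{t\,K(u(0))}$, is a clean and valid argument; the paper does not indicate any alternative route for this implication.
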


\begin{prop}
\label{prop:5.3}
Suppose that $u^0\ge 0, \not\equiv 0$ and that $u$ is the solution of \eqref{eq:1}. Then, the following statements are equivalent: 
\begin{itemize}
 \item[\textup{(i)}] There exists  $T_{\infty}>0$ such that $u$ blows up at $t=T_{\infty}$ in the sense that 
$\displaystyle{\lim_{t\to T_{\infty}}I(u(\cdot,t))=\infty}$.
 \item[\textup{(ii)}] There exists $t_{0}\ge 0$ such that $I(u(\cdot,t_{0}))>\mu^{\frac{1}{\alpha}}$.
\end{itemize}
\end{prop}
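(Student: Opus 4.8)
The plan is to reduce everything to a single autonomous differential inequality for the scalar quantity $y(t)=I(u(\cdot,t))=(u(\cdot,t),\psi)$ and then read off both implications. First I would differentiate $y$ in $t$ and test the weak formulation \eqref{eq:w1} against $\chi=\psi\in\dot{H}^{1}$, which gives
\begin{equation*}
y'(t)=(u_t,\psi)=(f(u),\psi)-A(u,\psi).
\end{equation*}
Since $A(\cdot,\cdot)$ is symmetric and $\psi$ is the first eigenfunction of \eqref{eq:evp}, choosing $\chi=u(\cdot,t)\in\dot{H}^{1}$ in \eqref{eq:evp} yields $A(u,\psi)=A(\psi,u)=\mu(\psi,u)=\mu\,y(t)$. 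Because $u^0\ge 0,\not\equiv 0$ forces $u\ge 0$, we have $f(u)=u^{\alpha+1}$; and since $\psi\ge 0$ with $\int_I x^{N-1}\psi\,dx=1$, the weighted measure $d\nu=x^{N-1}\psi\,dx$ is a probability measure. Convexity of $s\mapsto s^{\alpha+1}$ on $[0,\infty)$ together with Jensen's inequality then gives $(f(u),\psi)=\int_I u^{\alpha+1}\,d\nu\ge\left(\int_I u\,d\nu\right)^{\alpha+1}=y(t)^{\alpha+1}$. Combining these produces the key inequality
\begin{equation*}
y'(t)\ge y(t)^{\alpha+1}-\mu\,y(t).
\end{equation*}

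For the implication (ii)$\Rightarrow$(i), suppose $y(t_0)=y_0>\mu^{1/\alpha}$, so that $y_0^{\alpha}-\mu>0$. I would first show $y$ stays above $y_0$ for all $t\ge t_0$ in the existence interval: whenever $y\ge y_0>\mu^{1/\alpha}$ the right-hand side $y(y^{\alpha}-\mu)$ is strictly positive, so $y$ is strictly increasing there, and a short continuity (first-exit) argument rules out $y$ ever returning to the level $y_0$. With $y\ge y_0$ secured, the elementary bound $\mu y\le(\mu/y_0^{\alpha})\,y^{\alpha+1}=:\theta y^{\alpha+1}$ with $\theta\in(0,1)$ upgrades the inequality to $y'\ge(1-\theta)y^{\alpha+1}$. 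Integrating by separation of variables gives
\begin{equation*}
y(t)^{-\alpha}\le y_0^{-\alpha}-\alpha(1-\theta)(t-t_0),
\end{equation*}
so $y(t)\to\infty$ at some finite $T_\infty\le t_0+[\alpha(1-\theta)y_0^{\alpha}]^{-1}$, which is exactly statement (i).

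The converse (i)$\Rightarrow$(ii) is immediate by contraposition: if (ii) fails, then $y(t)=I(u(\cdot,t))\le\mu^{1/\alpha}$ for every $t$ in the existence interval, so $y$ is bounded and cannot satisfy $\lim_{t\to T_\infty}y=\infty$; hence (i) fails as well.

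I expect the only genuinely delicate point to be the rigorous passage from the differential inequality to finite-time blow-up, specifically the monotonicity/first-exit argument guaranteeing $y(t)\ge y_0$ for all $t\ge t_0$, which is what licenses replacing the linear term $\mu y$ by $\theta y^{\alpha+1}$ and thereby isolating the superlinear growth. Everything else (the eigenfunction identity, the Jensen step, and the ODE integration) is routine, though I would be careful to record that $f(u)=u^{\alpha+1}$ relies on the previously established non-negativity of $u$ and that $\psi$ is an admissible test function precisely because $\psi\in\dot{H}^{1}$.
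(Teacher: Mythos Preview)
Your proof is correct and follows exactly the standard eigenfunction (Kaplan-type) argument that the paper invokes; note that the paper does not actually prove Proposition~\ref{prop:5.3}, stating only that it is a well-known result for the semilinear heat equation in a bounded domain which ``easily extends to the radially symmetric case.'' The derivation of the scalar differential inequality $y'\ge y^{\alpha+1}-\mu y$ via testing \eqref{eq:w1} with $\psi$, the eigenvalue identity $A(u,\psi)=\mu(u,\psi)$, and Jensen's inequality against the probability measure $x^{N-1}\psi\,dx$ is precisely the intended mechanism.

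The only divergence from the paper is in the final integration step. The Remark immediately following Proposition~\ref{prop:5.3} records the bound
\[
T_\infty\le t_0+\int_{I(u(\cdot,t_0))}^\infty\frac{ds}{-\mu s+s^{1+\alpha}},
\]
which comes from separating variables directly in $y'\ge y^{\alpha+1}-\mu y$ and using that the improper integral converges. You instead first weaken the inequality to $y'\ge(1-\theta)y^{\alpha+1}$ with $\theta=\mu/y_0^{\alpha}<1$ before integrating. Your route trades the sharper implicit bound for an explicit closed-form estimate $T_\infty\le t_0+[\alpha(1-\theta)y_0^{\alpha}]^{-1}$; both establish (ii)$\Rightarrow$(i) with the same ingredients, and your contrapositive for (i)$\Rightarrow$(ii) is the standard one.
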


\begin{rem}
In Propositions \ref{prop:5.2} and \ref{prop:5.3},  the blow up time $T_{\infty}$ is estimated \tn{respectively as}
\[
T_{\infty}\le t_{0}+\frac{\alpha+2}{\alpha^{2}} N^{-\frac{\alpha}{2}}\|u(\cdot,t_0)\|^{-\alpha}.
\] 
and 
\[
T_{\infty}\le t_{0}+\int_{I(u(\cdot,t_{0}))}^{\infty}\frac{ds}{-\mu s+s^{1+\alpha}}.
\] 
\end{rem}

\medskip

We \tn{now} proceed to the discrete energy functionals. To this end, we \tn{employ} the finite element version of the eigenvalue problem: 
\begin{equation}
\label{eigen_h}
A(\hat{\psi}_{h},\chi_{h})=\hat{\mu}_{h}\dual{\hat{\psi}_{h},\chi_{h}}\quad (\chi_{h}\in S_{h}).
\end{equation}
\tn{Let} $\hat{\psi}_{h}\in S_{h}$ be the eigenfunction associated with 
the smallest eigenvalue $\hat{\mu}_{h}>0$ of \eqref{eigen_h}. 
For the eigenvalue problem \eqref{eigen_h}, we \tn{state} the following result\tn{, postponing} the proof \tn{to the} Appendix.  

\begin{prop}
\label{conv:eigenvalue-main}
If the partition $\{x_j\}_{j=0}^m$ is quasi-uniform, that is, \tn{satisfies} \eqref{eq:beta}, we have the following\tn{:}
\begin{itemize}
 \item[\textup{(i)}] $\hat{\mu}_{h}\to \mu$ as $h\to 0$.
 \item[\textup{(ii)}] The first eigenfunction $\hat{\psi}_{h}$ of \eqref{eigen_h} does not change sign.
 \item[\textup{(iii)}] $\|(\hat{\psi}_{h}-\psi)_x\|\to 0$ as $h\to 0$. 
\end{itemize}
\end{prop}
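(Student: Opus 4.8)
The plan is to treat the three assertions by, respectively, a Rayleigh-quotient (min--max) argument for the eigenvalue convergence (i), a Perron--Frobenius argument on the lumped stiffness and mass matrices for the sign property (ii), and a weak-to-strong upgrade for the eigenfunction convergence (iii). The single most useful ingredient is the lumping-error bound extracted inside the proof of Lemma \ref{prop:norm}, namely $|\dual{w,w}-(w,w)|\le Ch^{2}\|w_x\|^{2}$ for $w\in S_h$; it shows that on any family of discrete functions with $A$-bounded energy the lumped norm $\vnorm{\cdot}$ and the true weighted norm $\|\cdot\|$ agree up to $O(h^{2})$. Throughout I use the variational characterisations $\hat{\mu}_{h}=\min_{0\ne v_h\in S_h}A(v_h,v_h)/\vnorm{v_h}^{2}$ and $\mu=\min_{0\ne v\in\dot{H}^{1}}A(v,v)/\|v\|^{2}$, together with the smoothness of $\psi$ (so that Lemma \ref{prop:tj} applies to $\psi$).

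For (i) I would first obtain the upper bound by inserting the test function $P_A\psi$ into the discrete Rayleigh quotient. Since $P_A$ is the $A$-orthogonal projection \eqref{eq:pA}, $A(P_A\psi,P_A\psi)\le A(\psi,\psi)=\mu$, while $\|(P_A\psi)_x\|^{2}=A(P_A\psi,P_A\psi)\le\mu$ stays bounded; combining $\|P_A\psi-\psi\|\le Ch^{2}\|\psi_{xx}\|\to0$ from \eqref{eq:tj1} with the lumping-error bound gives $\vnorm{P_A\psi}^{2}=(P_A\psi,P_A\psi)+O(h^{2})\to\|\psi\|^{2}$, hence $\limsup_{h\to0}\hat{\mu}_{h}\le\mu$. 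For the lower bound I normalise $\vnorm{\hat{\psi}_{h}}=1$, so that $\|(\hat{\psi}_{h})_x\|^{2}=A(\hat{\psi}_{h},\hat{\psi}_{h})=\hat{\mu}_{h}$ is bounded and $\{\hat{\psi}_{h}\}$ is bounded in $\dot{H}^{1}$. Passing to a subsequence I extract a weak $\dot{H}^{1}$ limit $\psi_\ast$, strong in the weighted $L^{2}$ by the compact embedding of $\dot{H}^{1}$; the lumping-error bound again forces $\|\psi_\ast\|^{2}=\lim\|\hat{\psi}_{h}\|^{2}=\lim\vnorm{\hat{\psi}_{h}}^{2}=1$, so $\psi_\ast\ne0$. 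Testing \eqref{eigen_h} against $\chi_h=P_A\chi$ for fixed $\chi\in\dot{H}^{1}$ and letting $h\to0$ (using $P_A\chi\to\chi$ in $\dot{H}^{1}$, weak convergence on the left, and the lumping bound on the right) shows that $\psi_\ast$ solves \eqref{eq:evp} with eigenvalue $\lim\hat{\mu}_{h}$. Because $\mu$ is the smallest eigenvalue of \eqref{eq:evp} and $\psi_\ast\ne0$, this yields $\lim\hat{\mu}_{h}\ge\mu$; together with the upper bound one gets $\hat{\mu}_{h}\to\mu$, and since the argument pins down every subsequential limit the full sequence converges.

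For (ii) I would pass to the matrix form of \eqref{eigen_h}: with $\mathbf{A}=(A(\phi_j,\phi_i))$ and the \emph{diagonal} lumped mass matrix $\mathbf{M}=\mathrm{diag}((1,\phi_i))$ (diagonal precisely because of \eqref{eq:lump}), the problem reads $\mathbf{A}\vec{\psi}=\hat{\mu}_{h}\mathbf{M}\vec{\psi}$. Symmetrising via $\mathbf{B}=\mathbf{M}^{-1/2}\mathbf{A}\mathbf{M}^{-1/2}$, one checks that $\mathbf{A}$ is tridiagonal with strictly negative off-diagonal entries $a_{i,i\pm1}=A(\phi_{i\pm1},\phi_i)<0$ (adjacent hat functions have opposite-signed derivatives on the shared element, integrated against the positive weight $x^{N-1}$), so $\mathbf{B}$ is an irreducible symmetric matrix with nonpositive off-diagonal entries. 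Then $s\mathbf{I}-\mathbf{B}$ is, for large $s$, entrywise nonnegative and irreducible, so Perron--Frobenius provides a simple largest eigenvalue with a strictly positive eigenvector; this corresponds to the smallest eigenvalue $\hat{\mu}_{h}$ of $\mathbf{B}$, whence the coefficient vector of $\hat{\psi}_{h}$ is of one sign and $\hat{\psi}_{h}$ does not change sign (and $\hat{\mu}_{h}$ is simple). For (iii), once the sign and normalisation of $\hat{\psi}_{h}$ are fixed to match $\psi$, part (i) already yields $\hat{\psi}_{h}\rightharpoonup\psi$ in $\dot{H}^{1}$ together with $\|(\hat{\psi}_{h})_x\|^{2}=\hat{\mu}_{h}\to\mu=\|\psi_x\|^{2}$; weak convergence plus convergence of norms in the Hilbert space $(\dot{H}^{1},A(\cdot,\cdot))$ upgrades to strong convergence $\|(\hat{\psi}_{h}-\psi)_x\|\to0$.

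The main obstacle I expect is the careful bookkeeping of the mass-lumping inner product in the limit passage of (i): one must repeatedly convert between $\dual{\cdot,\cdot}$ and $(\cdot,\cdot)$ and verify that the $O(h^{2}\|\cdot_x\|^{2})$ discrepancy is harmless on the energy-bounded eigenfunctions, and one must reconcile the normalisations of $\psi$ and $\hat{\psi}_{h}$ so that the statement of (iii) is meaningful. Secondary technical points are the compact embedding $\dot{H}^{1}\hookrightarrow L^{2}(x^{N-1}\,dx)$ used for the strong $L^{2}$ compactness and the smoothness of $\psi$ needed to invoke Lemma \ref{prop:tj}; both are standard for this weighted Sturm--Liouville problem but should be stated explicitly.
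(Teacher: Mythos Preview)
Your proposal is correct, and for part (ii) it essentially matches the paper: both pass to matrix form, note the strictly negative off-diagonal stiffness entries, shift to obtain an entrywise nonnegative irreducible matrix, and apply Perron--Frobenius (you symmetrise via $\mathbf{M}^{-1/2}$ whereas the paper works with $\mathcal{M}^{-1}\mathcal{A}$ directly, a cosmetic difference). For (i) and (iii), however, the approaches diverge genuinely. The paper introduces the solution operators $T,T_h,\hat{T}_h$ and invokes the abstract spectral-approximation framework of Descloux--Nassif--Rappaz and Banerjee--Osborn: operator-norm convergence $\|T-\hat{T}_h\|_{1,S_h}\to0$, uniform resolvent bounds on a circle $\Gamma$ around $1/\mu$, spectral projections $E,\hat{E}_h$, gap convergence $\hat{\delta}(E(\dot{H}^1),\hat{E}_h(S_h))\to0$, and Kato's dimension-preservation lemma; eigenvalue convergence and, for (iii), $A$-norm eigenfunction convergence are then read off from this machinery. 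Your route is the direct Rayleigh-quotient/compactness argument---upper bound by testing with $P_A\psi$, lower bound by extracting a weak $\dot{H}^1$ limit of the normalised $\hat{\psi}_h$ and passing to the limit in \eqref{eigen_h}, then for (iii) the Hilbert-space fact that weak convergence plus convergence of norms in $(\dot{H}^1,A(\cdot,\cdot))$ yields strong convergence. Your way is more elementary and self-contained, needing only the lumping estimate $|\dual{w,w}-(w,w)|\le Ch^{2}\|w_x\|^{2}$ and standard weighted Sturm--Liouville facts (simplicity of $\mu$, compact embedding $\dot{H}^{1}\hookrightarrow L^{2}(x^{N-1}dx)$, smoothness of $\psi$); the paper's framework is heavier but would extend immediately to higher eigenpairs and deliver convergence rates. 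The caveats you anticipate---normalisation bookkeeping between $\vnorm{\cdot}=1$, $\|\cdot\|=1$, and the paper's $\int_I x^{N-1}\psi\,dx=1$, and the compact embedding---are real but routine.
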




\medskip

Therefore, \tn{without loss of generality,} we \tn{can} assume that $\hat{\psi}_{h}\ge 0$ and $\int_{I}x^{N-1}\hat{\psi}_{h}(x)~dx=1$. 

\tn{For} $v\in S_h$, \tn{we set}  
\begin{align*}
K_{h}(v)&=\frac{1}{2}\|v_{x}\|^{2}-\frac{1}{\alpha+2}\sum_{i=0}^{m}|v(x_{i})|^{\alpha+2}(1,\phi_{i}),\\
I_{h}(v)&=\dual{v,\hat{\psi}_{h}}=\int_{I}x^{N-1}\Pi_{h}(v\hat{\psi}_{h})(x)~dx.
\end{align*}

We introduce the approximate blow-up time~$\hat{T}_{\infty}(h)$ by setting 
\begin{equation}
\hat{T}_{\infty}(h)=\lim_{n\to\infty}t_{n}=\lim_{n\to\infty}\sum_{j=0}^{n-1}\tau_{j}.
\end{equation}

We are now in a position to mention the main theorems in this section:

\begin{thm}
\label{prop:5.6}
Let \eqref{eq:ivp} be satisfied. 
Suppose that the solution $u$ of \eqref{eq:1} blows up at finite time $T_{\infty}$ in the sense that 
\begin{equation}
\label{blow1}
\|u(\cdot,t)\|_{L^{\infty}(I)}\to \infty \  \mbox{ and } \  \|u(\cdot,t)\|_{L^2(I)}\to \infty\quad (t\to T_{\infty}-0).
\end{equation}
Assume that for any $T<T_\infty$, $u$ is sufficiently smooth that \eqref{eq:smooth1} holds. 
Assuming \tn{also} that \eqref{eq:beta} is satisfied, we set 
\begin{equation}
 \label{eq:tau9}
\tau=\delta \frac{\beta^2}{N+1}h^2
\end{equation}
for some $\delta\in (0,1]$. 
The time increment $\tau_{n}$ is \tn{iteratively} defined as 
\begin{equation}
\label{time1}
\tau_{n}=\tau_{n}(h)=\tau\min\left\{1,\frac{1}{\vnorm{u_{h}^{n}}^{\alpha}}\right\}
\end{equation}
\tn{where we have used} the solution $u_h^n$ of \textup{(ML--2)}. 
Moreover, assume that \eqref{eq:tau1} is satisfied and that
\begin{equation}
\label{eq:K}
\forall T< T_{\infty},\quad 
\lim_{h\to0}\sup_{0\le t_{n}\le T}|K(u(\cdot,t_{n}))-K_{h}(u_{h}^{n})|=0.
\end{equation}
\tn{We then} have 
\begin{equation}
\lim_{h\to0}\hat{T}_{\infty}(h)=T_{\infty}.
 \label{eq:77}
\end{equation}
\end{thm}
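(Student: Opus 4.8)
The plan is to establish \eqref{eq:77} by proving the two one-sided bounds $\liminf_{h\to0}\hat{T}_\infty(h)\ge T_\infty$ and $\limsup_{h\to0}\hat{T}_\infty(h)\le T_\infty$ separately; the finiteness of $\hat{T}_\infty(h)$ will come out of the second bound. Throughout I would use that \eqref{eq:tau9} with $\delta\in(0,1]$ forces $\tau\le\frac{\beta^2}{N+1}h^2$, so \eqref{eq:tau1} holds by Theorem \ref{th:tau}, that $\tau_n\le\tau$ always by \eqref{time1}, and that $u_h^n\ge0$ by Theorem \ref{prop:1}(iii).

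\emph{Lower bound.} Fix any $T<T_\infty$. On $Q_T$ the exact solution is smooth, so Theorem \ref{th:ml2-1} gives $\sup_{0\le t_n\le T}\|u_h^n-u(\cdot,t_n)\|_{L^\infty(I)}\le C(h+\tau)$ with $\tau\sim h^2$. Hence, for $h$ small, $\|u_h^n\|_{L^\infty(I)}\le\|u\|_{L^\infty(Q_T)}+1$ for every $n$ with $t_n\le T$, and since $\vnorm{u_h^n}\le N^{-1/2}\|u_h^n\|_{L^\infty(I)}$, the adaptive step \eqref{time1} obeys $\tau_n\ge c\,\tau>0$ with $c$ depending only on that bound. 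Thus at most $T/(c\tau)$ discrete times lie in $[0,T]$, so some $t_n>T$ with no blow-up before it, whence $\hat{T}_\infty(h)=\lim_n t_n\ge T$. (The mild interdependence between this a priori bound and the applicability of the error estimate under adaptive stepping is removed, as in the proofs above, by first running the argument for the truncated nonlinearity $\tilde f$ and then checking the truncation is inactive.) Letting $T\uparrow T_\infty$ gives $\liminf_{h\to0}\hat{T}_\infty(h)\ge T_\infty$.

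\emph{Upper bound.} Since $u$ blows up, Proposition \ref{prop:5.2} yields $t_0<T_\infty$ with $K(u(\cdot,t_0))<0$, and by the monotonicity of Proposition \ref{prop:5.1} such $t_0$ may be taken arbitrarily close to $T_\infty$. Fix one together with $T\in(t_0,T_\infty)$, and let $t_{n_0}$ be the discrete time nearest $t_0$. Hypothesis \eqref{eq:K} then forces $K_h(u_h^{n_0})<0$ for all small $h$. The central step is a discrete analogue of Proposition \ref{prop:5.2}: I would show that $K_h(u_h^n)$ does not increase along (ML--2), and that negativity of $K_h(u_h^{n_0})$ drives a discrete differential inequality for $\vnorm{u_h^n}^2$ which, combined with the step rule \eqref{time1}, forces the numerical solution to blow up with the quantitative estimate $\hat{T}_\infty(h)-t_{n_0}\le C\vnorm{u_h^{n_0}}^{-\alpha}$, mirroring the continuous bound $T_\infty\le t_0+\frac{\alpha+2}{\alpha^2}N^{-\alpha/2}\|u(\cdot,t_0)\|^{-\alpha}$ recorded in the Remark above. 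In particular $\hat{T}_\infty(h)<\infty$. Sending $h\to0$ with $t_0$ fixed, the error estimate on $[0,T]$ gives $t_{n_0}\to t_0$ and $\vnorm{u_h^{n_0}}\to\|u(\cdot,t_0)\|$, so $\limsup_{h\to0}\hat{T}_\infty(h)\le t_0+C\|u(\cdot,t_0)\|^{-\alpha}$; finally, sending $t_0\uparrow T_\infty$ and using $\|u(\cdot,t_0)\|\to\infty$ from \eqref{blow1}, the right-hand side tends to $T_\infty$, which yields $\limsup_{h\to0}\hat{T}_\infty(h)\le T_\infty$.

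\emph{Main obstacle.} The hard part is the quantitative discrete blow-up estimate $\hat{T}_\infty(h)-t_{n_0}\le C\vnorm{u_h^{n_0}}^{-\alpha}$ with $C$ independent of $h$ and $t_0$. This needs a discrete energy identity showing $K_h(u_h^n)$ is non-increasing along the mass-lumped scheme (ML--2), followed by a Levine/Kaplan-type discrete concavity argument on $\vnorm{u_h^n}^2$ using the adaptive step \eqref{time1}. The bookkeeping is delicate because (ML--2) is built on the lumped form $\dual{\cdot,\cdot}$ while $K_h$ mixes $\|\cdot\|$ with the lumped nonlinear term, and because the step size itself depends on the unknown; the positivity $u_h^n\ge0$ guaranteed by Theorem \ref{prop:1}(iii) under \eqref{eq:tau1} is precisely what keeps these manipulations valid.
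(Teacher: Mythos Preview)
Your proposal is correct and follows essentially the same route as the paper: the paper also splits into lower and upper bounds (deferring the standard lower-bound/assembly argument to Nakagawa \cite{nak76} and its successors), and the ``main obstacle'' you identify is precisely what the paper isolates as Lemmas \ref{prop:5.3a}, \ref{prop:5.4} and \ref{prop:5.5} --- namely, the non-increase of $K_h(u_h^n)$ along (ML--2), the difference inequality $\tfrac12\partial_{\tau_n}\vnorm{u_h^{n+1}}^2\ge \tfrac{\alpha}{\alpha+2}N^{\alpha/2}\vnorm{u_h^n}^{\alpha+2}$ obtained by testing with $\chi=u_h^n$, and the resulting bound $\hat T_\infty(h)\le t_{n_0}+C\vnorm{u_h^{n_0}}^{-\alpha}$. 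One small sharpening: for the paper's Lemma \ref{prop:5.5} one also needs $\vnorm{u_h^{n_0}}\ge 1$, which you should secure from the blow-up of the exact solution together with the $L^\infty$ convergence before passing to the double limit $h\to0$, $t_0\uparrow T_\infty$.
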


\begin{thm}
\label{prop:5.7}
Let \eqref{eq:ivp} be satisfied. 
Suppose that the solution $u$ of \eqref{eq:1} blows up at finite time $T_{\infty}$ in the sense that 
\begin{equation}
\label{blow2}
I(u(\cdot,t))\to\infty\quad \mbox{and}\quad \|u(\cdot,t)\|_{L^{\infty}(I)}\to \infty\quad (t\to T_{\infty}-0).
\end{equation}
Assume that, for any $T<T_\infty$, $u$ is sufficiently smooth that \eqref{eq:smooth1} holds. 
Assuming \tn{also} that \eqref{eq:beta} is satisfied, we set $\tau$ by \eqref{eq:tau9} with some $\delta\in (0,1]$. 
The time increment $\tau_{n}$ is \tn{iteratively} defined as 
\begin{equation}
\label{time2}
\tau_{n}=\tau_{n}(h)=\tau\min\left\{1,\frac{1}{I_{h}(u_{h}^{n})^{\alpha}}\right\}
\end{equation}
\tn{where we have used} the solution $u_h^n$ of \textup{(ML--2)} with \eqref{eq:iv}. \tn{We then} obtain \eqref{eq:77}. 
\end{thm}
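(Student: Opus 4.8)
The plan is to establish \eqref{eq:77} by squeezing $\hat{T}_{\infty}(h)$ from both sides: I will show $\liminf_{h\to 0}\hat{T}_{\infty}(h)\ge T_{\infty}$ and $\limsup_{h\to 0}\hat{T}_{\infty}(h)\le T_{\infty}$. Throughout I use that $\tau=\delta\beta^{2}h^{2}/(N+1)\le \beta^{2}h^{2}/(N+1)$ forces \eqref{eq:tau1} by Theorem \ref{th:tau}, so Theorem \ref{prop:1}(iii) applies and the iterates are nonnegative, $u_{h}^{n}\ge 0$; this nonnegativity is what renders the nonlinear term cooperative below. I also record that $\hat{\mu}_{h}$ is bounded, since $\hat{\mu}_{h}\to\mu$ by Proposition \ref{conv:eigenvalue-main}(i).

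For the lower bound, fix $T<T_{\infty}$. On $Q_{T}$ the exact solution is smooth, so Theorem \ref{th:ml2-1} gives $\sup_{0\le t_{n}\le T}\|u_{h}^{n}-u(\cdot,t_{n})\|_{L^{\infty}(I)}\le C(h+\tau)$, whence $\|u_{h}^{n}\|_{L^{\infty}(I)}$ stays bounded by some $C_{T}$ for $t_{n}\le T$. Since $\hat{\psi}_{h}\ge 0$ with $\dual{1,\hat{\psi}_{h}}=(1,\Pi_{h}\hat{\psi}_{h})=1$ (Lemma \ref{prop:dual} and the normalization of $\hat{\psi}_{h}$), we have $0\le I_{h}(u_{h}^{n})\le \|u_{h}^{n}\|_{L^{\infty}(I)}$, so $I_{h}(u_{h}^{n})$ is bounded on $[0,T]$ and $\tau_{n}\ge \tau\min\{1,C_{T}^{-\alpha}\}>0$ there. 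Because each increment is bounded below by a positive multiple of $\tau$, the scheme passes the level $T$ after finitely many steps, i.e. $\hat{T}_{\infty}(h)>T$ for $h$ small; letting $h\to 0$ and then $T\to T_{\infty}$ yields $\liminf_{h\to 0}\hat{T}_{\infty}(h)\ge T_{\infty}$. A standard continuation argument guarantees the iterates are defined and bounded up to $T$, removing the apparent circularity that the adaptive mesh depends on the solution.

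For the upper bound I track $J_{n}:=I_{h}(u_{h}^{n})=\dual{u_{h}^{n},\hat{\psi}_{h}}$. Testing \textup{(ML--2)} with $\chi=\hat{\psi}_{h}$, using $A(u_{h}^{n},\hat{\psi}_{h})=\hat{\mu}_{h}\dual{\hat{\psi}_{h},u_{h}^{n}}=\hat{\mu}_{h}J_{n}$ from \eqref{eigen_h}, and the convexity estimate
\[
\dual{f(u_{h}^{n}),\hat{\psi}_{h}}=\sum_{i}(u_{h}^{n}(x_{i}))^{1+\alpha}\hat{\psi}_{h}(x_{i})(1,\phi_{i})\ge\Big(\sum_{i}u_{h}^{n}(x_{i})\hat{\psi}_{h}(x_{i})(1,\phi_{i})\Big)^{1+\alpha}=J_{n}^{1+\alpha},
\]
which is Jensen's inequality for the probability weights $w_{i}=\hat{\psi}_{h}(x_{i})(1,\phi_{i})\ge 0$, $\sum_{i}w_{i}=1$ (valid since $u_{h}^{n}\ge 0$ and $s\mapsto s^{1+\alpha}$ is convex on $[0,\infty)$), I obtain the discrete differential inequality $J_{n+1}\ge J_{n}+\tau_{n}J_{n}(J_{n}^{\alpha}-\hat{\mu}_{h})$. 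Fix a threshold $M\ge 1$ with $M^{\alpha}\ge 2\sup_{h}\hat{\mu}_{h}$; once $J_{n}\ge M$ one has $\tau_{n}=\tau/J_{n}^{\alpha}$ and hence $J_{n+1}\ge J_{n}(1+\tau/2)$, so $J_{n}$ grows geometrically and the tail increments satisfy $\sum_{k\ge n}\tau_{k}\le (\tau/M^{\alpha})\sum_{j\ge 0}(1+\tau/2)^{-\alpha j}\le 4/(\alpha M^{\alpha})$ for small $\tau$.

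It remains to force $J_{n}$ above $M$ near $T_{\infty}$. Given $\varepsilon>0$, choose $M$ so large that $4/(\alpha M^{\alpha})\le\varepsilon/2$, and then $T_{1}<T_{\infty}$ with $I(u(\cdot,T_{1}))\ge 2M$. The convergence $\sup_{0\le t_{n}\le T_{1}}|I_{h}(u_{h}^{n})-I(u(\cdot,t_{n}))|\to 0$, which I will prove from $\|u_{h}^{n}-u(\cdot,t_{n})\|_{L^{\infty}(I)}\le C(h+\tau)$, the lumping error (Lemma \ref{prop:interval}), and $\|\hat{\psi}_{h}-\psi\|\to 0$ (Poincar\'e plus Proposition \ref{conv:eigenvalue-main}(iii)), then yields for small $h$ an index $n_{1}$ with $t_{n_{1}}\le T_{1}$ and $J_{n_{1}}\ge M$. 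Consequently $\hat{T}_{\infty}(h)=t_{n_{1}}+\sum_{k\ge n_{1}}\tau_{k}\le T_{1}+\varepsilon/2<T_{\infty}+\varepsilon$, giving $\limsup_{h\to 0}\hat{T}_{\infty}(h)\le T_{\infty}$. The main obstacle is precisely this last synthesis: deriving the clean super-solution recursion for $J_{n}$ (the Jensen step together with the eigen-relation is the crux) and coupling the geometric-growth and tail-summability estimate with the convergence of $I_{h}(u_{h}^{n})$ so that the numerical solution is driven past the threshold before $T_{\infty}$. The nonnegativity of both $u_{h}^{n}$ and $\hat{\psi}_{h}$ is used decisively and must be preserved throughout.
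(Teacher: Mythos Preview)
Your proposal is correct and shares all the essential ingredients with the paper's proof: the $\liminf$ bound via the $L^\infty$ error estimate (Lemma~\ref{lem5.9} in the paper, argued there by contradiction rather than your direct continuation argument), the convergence $I_h(u_h^n)\to I(u(\cdot,t_n))$ (Lemma~\ref{energy_error}), and the key recursion for $J_n=I_h(u_h^n)$ obtained by testing \textup{(ML--2)} with $\hat{\psi}_h$, invoking the eigen-relation \eqref{eigen_h}, and applying Jensen's inequality with the probability weights $\hat{\psi}_h(x_i)(1,\phi_i)$. The only real difference is in the packaging of the $\limsup$ bound: the paper verifies the hypotheses of an abstract blow-up framework from \cite{ss16b} (Propositions~4.2 and 4.3; the decisive condition (H2) there is exactly your Jensen-based inequality, written as $\partial_{\tau_n}I_h(u_h^{n+1})\ge\tfrac12 f(I_h(u_h^n))$ once $I_h(u_h^n)$ exceeds a threshold $s_0$), whereas you carry out directly the geometric-growth and tail-summability argument that this framework encapsulates. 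Your route is more self-contained; the paper's is terser by outsourcing the endgame; the two are the same argument in substance.
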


\begin{rem}
\tn{The above theorems differ in that Theorem \ref{prop:5.6} requires the} convergence property \eqref{eq:K} of the discrete energy functional $K_{h}(u_{h}^{n})$, whereas \tn{no} convergence property \tn{of} $I_h$ is necessary in Theorem \ref{prop:5.7}.  
%
\end{rem}

\begin{rem}
Unfortunately, we could not prove Theorems \ref{prop:5.6} and \ref{prop:5.7} using the solution of (ML--1). In particular, the proof of the difference inequalities \eqref{eq:J10} and \eqref{eq:h2} \tn{failed in scheme (ML--1)}. 
\end{rem}

\subsection{Proof of Theorem \ref{prop:5.6}}

\tn{To prove Theorem \ref{prop:5.6}, we} follow Nakagawa's blow-up analysis \cite{nak76}. 
\tn{For this purpose}, \tn{we must} derive 
the difference inequality \eqref{eq:J10} and the boundedness \eqref{eq:T10} of $\hat{T}_\infty$ \tn{(see Lemmas \ref{prop:5.4} and \ref{prop:5.5})}. \tn{The original proof in \cite{nak76} immediately follows from these} results; 
see also \cite{che86}, \cite{che92}, and \cite{cho07}. Therefore, we concentrate our \tn{efforts on proving} Lemmas \ref{prop:5.4} and \ref{prop:5.5}. 

Throughout this \tn{subsection}, we \tn{take} the same assumptions \tn{of} Theorem \ref{prop:5.6}\tn{; in} particular, the time-increment control \eqref{time1}. \tn{Note} that \tn{condition} \eqref{eq:tau1} is satisfied by the definition of $\tau_n$. 
Consequently, the solution $u$ of \eqref{eq:1} and the solution $u_h^n$ of (ML--2) are non-negative. 

\begin{lemma}
\label{prop:5.3a}
$K_{h}(u_{h}^{n})$ is a non-\tn{increasing} sequence of $n$. 
\end{lemma}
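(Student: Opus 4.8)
The plan is to establish the monotonicity by a discrete energy argument: test scheme \textup{(ML--2)} against the increment, treat the quadratic and nonlinear parts of $K_h$ separately using a bilinear-form identity and the convexity of the nonlinearity, and thereby reduce everything to a single inequality between $A(\cdot,\cdot)$ and $\dual{\cdot,\cdot}$ that is furnished by the step-size restriction \eqref{eq:tau1}.

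First I would set $e_h=u_h^{n+1}-u_h^n\in S_h$, so that $\partial_{\tau_n}u_h^{n+1}=e_h/\tau_n$, and take $\chi=e_h$ in \textup{(ML--2)} to get $\tfrac{1}{\tau_n}\vnorm{e_h}^2+A(u_h^n,e_h)=\dual{f(u_h^n),e_h}$. For the quadratic part of $K_h$, since $\|v_x\|^2=A(v,v)$ I would use the symmetric-bilinear-form identity
\[
\tfrac12 A(u_h^{n+1},u_h^{n+1})-\tfrac12 A(u_h^{n},u_h^{n})=A(u_h^n,e_h)+\tfrac12 A(e_h,e_h),
\]
and substitute $A(u_h^n,e_h)$ from the tested equation. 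For the nonlinear part, set $F(s)=|s|^{\alpha+2}/(\alpha+2)$, so that $F'=f$ and $F$ is convex (its derivative $|s|^\alpha s$ is nondecreasing). Because every weight $(1,\phi_i)=\int_I x^{N-1}\phi_i\,dx\ge 0$, applying the tangent inequality $F(b)-F(a)\ge f(a)(b-a)$ nodewise yields
\[
\frac{1}{\alpha+2}\sum_{i=0}^{m}\bigl(|u_h^{n+1}(x_i)|^{\alpha+2}-|u_h^{n}(x_i)|^{\alpha+2}\bigr)(1,\phi_i)\ \ge\ \dual{f(u_h^n),e_h}.
\]

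Combining the two parts, the $\dual{f(u_h^n),e_h}$ contributions cancel, and I expect to arrive at
\[
K_h(u_h^{n+1})-K_h(u_h^{n})\ \le\ -\frac{1}{\tau_n}\vnorm{e_h}^2+\frac12\|(e_h)_x\|^2 .
\]
Thus the lemma reduces to the coercivity-type inequality $\tfrac{\tau_n}{2}A(e_h,e_h)\le\dual{e_h,e_h}$, or equivalently $\tau_n A(\chi,\chi)\le 2\dual{\chi,\chi}$ for all $\chi\in S_h$.

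This last inequality is the crux and the only nonroutine point. I would deduce it from \eqref{eq:tau1}: writing $a_{i,j}=A(\phi_j,\phi_i)$ and $m_i=(1,\phi_i)$, the reduced stiffness matrix is tridiagonal with nonpositive off-diagonal entries and nonnegative row sums, so $\sum_{j}|a_{i,j}|\le 2a_{i,i}$ for every unknown index $i$. A Gershgorin estimate for the symmetric pencil then shows that the largest generalized eigenvalue of $A(\chi,\cdot)=\lambda\dual{\chi,\cdot}$ obeys $\lambda_{\max}\le\max_i 2a_{i,i}/m_i$, whence \eqref{eq:tau1} forces $\tau_n\lambda_{\max}\le 2$ and $\tau_n A(\chi,\chi)\le 2\dual{\chi,\chi}$ follows. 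I expect the main obstacle to be organizing this discrete spectral estimate cleanly, in particular handling the origin node $i=0$ and the node adjacent to the Dirichlet boundary $i=m-1$, where the reduced row sums are only nonnegative rather than exactly zero; the energy identity and the convexity step are otherwise straightforward.
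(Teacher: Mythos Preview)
Your proposal is correct and follows essentially the same route as the paper: test \textup{(ML--2)} against the increment, handle the quadratic part by the polarization identity and the nonlinear part by a tangent/mean-value inequality, and close via $\tau_n A(\chi,\chi)\le 2\dual{\chi,\chi}$ from \eqref{eq:tau1}. The paper obtains this last bound by the elementary estimate $A(v_h,v_h)\le 2\sum_j a_{jj}v_j^2$ (using $(a-b)^2\le 2a^2+2b^2$ on each element) rather than Gershgorin, and treats the nonlinearity via two applications of the mean value theorem exploiting the nonnegativity of $u_h^n$ rather than your convexity argument, but these are cosmetic variations of the same idea.
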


\begin{proof}
\tn{Fixing some} $n\ge 0$, we write $w=u_h^{n+1}$, $u=u_h^n$, $w_j=w(x_j),$ and $u_j=u({x_j})$. 
\tn{To} show that $K_{h}(w)-K_{h}(u)\le 0$, we \tn{perform the following division}:
\[
 K_{h}(w)-K_{h}(u)=\textup{X}+\textup{Y},
\]
where 
\begin{align*}
\textup{X} &=\frac{1}{2}\|w_{x}\|^{2}-\frac{1}{2}\|u_{x}\|^{2},\\
\textup{Y} &= -\frac{1}{\alpha+2}\sum_{j=0}^{m-1}w_j^{\alpha+2}(1,\phi_{j})+\frac{1}{\alpha+2}\sum_{j=0}^{m-1}u_j^{\alpha+2}(1,\phi_{j}).
\end{align*}
$\textup{X}$ is expressed as 
\[
 \textup{X} =A(u,w-u)+\frac{1}{2}A(w-u,w-u).
\] 
\tn{By} the mean value theorem, there exists $\theta_j\in [0,1]$ such that 
\[
w_j^{\alpha+2}-u_j^{\alpha+2}=(\alpha+2)\tilde{u}_j^{\alpha+1}(w_j-u_j),
\]
where $\tilde{u}_j={u}+\theta_{j}(w-u)$. Therefore, 
\begin{align*}
\textup{Y}& = 
-\sum_{j=0}^{m-1} \tilde{u}_{j}^{\alpha+1}(1,\phi_{j}) (w_j-u_j) \\
&=-\sum_{j=0}^{m-1}[ \tilde{u}_{j}^{\alpha+1}-u^{\alpha+1}] (1,\phi_{j}) (w_j-u_j)-\sum_{j=0}^{m-1}u_j^{\alpha+1}(1,\phi_{j})(w_j-u_j)=\textup{Y}_1+\textup{Y}_2.
\end{align*}

We calculate 
\[
 A(u,w-u)+\textup{Y}_2= -\dual{\frac{w-u}{\tau_n},w-u}=-\frac{1}{\tau}\vnorm{w-u}^2 
\]
and 
\[
  \textup{Y}_1 = -\sum_{j=0}^{m-1}(\alpha+1)\theta_j(1,\phi_j)\hat{u}_j^\alpha (w_j-u_j)^2\le 0,
\]
where $\hat{u}_j=u+\hat{\theta}_j(\tilde{u}_j-u_j)$ with some $\hat{\theta}_j\in [0,1]$. 

\tn{Meanwhile}, for $v_{h}\in S_{h}(I)$, \tn{we write}
\begin{align*}
A(v_{h},v_{h})
&\le 2\sum_{j=1}^{m}\int_{I_{j}}x^{N-1}\cdot\frac{1}{h_{j}^{2}}~dx\cdot(v_{j}^2+v_{j-1}^2)\\
&=2\sum_{j=0}^{m-1}a_{jj}v_{j}^{2}.
\end{align*}
Using \eqref{eq:tau1}, we have 
\[
 A(v_{h},v_{h})\le 2\sum_{j=0}^{m-1} \frac{ (1,\phi_j)}{\tau_n }v_j^2
=\frac{2}{\tau_{n}}\dual{v_{h},v_{h}}=\frac{2}{\tau_{n}}\vnorm{v_{h}}^2\quad (v_h\in S_h).
\]

\tn{We thereby deduce that}
\[
  \textup{X}+\textup{Y}
= -\frac{1}{\tau_n}\vnorm{w-u}^2+\frac{1}{2}A(w-u,w-u)+\textup{Y}_1\le 0,
\]
which implies that $K_h(u_h^n)$ is non-increasing in $n$. 
 \end{proof}

\begin{lemma}
\label{prop:5.4}
If there exists a non-negative integer $n'$ such that $K_{h}(u_{h}^{n})\le 0$ for all $n\ge n'$, we have 
\begin{equation}
 \frac{1}{2}\partial_{\tau_{n}}\vnorm{u_{h}^{n+1}}^{2}\ge \frac{\alpha}{\alpha+2}N^{\frac{\alpha}{2}}\vnorm{u_{h}^{n}}^{\alpha+2} \quad (n\ge n').
\label{eq:J10}
\end{equation}
\end{lemma}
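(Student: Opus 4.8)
The plan is to reproduce, at the discrete level, the classical energy argument that yields the blow-up differential inequality in the continuous setting, using the lumped inner product $\dual{\cdot,\cdot}$ and converting the energy hypothesis $K_h(u_h^n)\le 0$ into a lower bound. Fix $n\ge n'$ and abbreviate $w=u_h^{n+1}$ and $u=u_h^n$. The crucial first step is to test scheme \textup{(ML--2)} with $\chi=u=u_h^n$ rather than with $u_h^{n+1}$: because \textup{(ML--2)} treats the stiffness term explicitly (it contains $A(u_h^n,\chi)$), testing against $u_h^n$ produces precisely $A(u,u)=\|u_x\|^2$ together with the nonlinear pairing $\dual{f(u),u}$, with no time-level mismatch in the nonlinearity. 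Applying the polarization identity $\dual{w-u,u}=\tfrac12(\vnorm{w}^2-\vnorm{u}^2)-\tfrac12\vnorm{w-u}^2$ and discarding the nonnegative remainder $\tfrac{1}{2\tau_n}\vnorm{w-u}^2$ then gives the clean bound
\[
\frac{1}{2}\partial_{\tau_n}\vnorm{u_h^{n+1}}^2 \ge -\|u_x\|^2+\dual{f(u_h^n),u_h^n}.
\]

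Next I would exploit the hypothesis $K_h(u_h^n)\le 0$. Since $f(s)=s|s|^\alpha$ and $u_h^n\ge 0$ (guaranteed by \eqref{eq:tau1} through Theorem \ref{prop:1}), the pairing $\dual{f(u),u}$ equals $\sum_{i=0}^{m-1}u(x_i)^{\alpha+2}(1,\phi_i)$, which is exactly the nonlinear part appearing in $K_h$ up to the factor $\alpha+2$; the node $x_m=1$ contributes nothing because $u(1)=0$. Thus $K_h(u)\le 0$ reads $\|u_x\|^2\le \tfrac{2}{\alpha+2}\dual{f(u),u}$, and substituting this into the previous bound yields $\tfrac12\partial_{\tau_n}\vnorm{u_h^{n+1}}^2\ge \tfrac{\alpha}{\alpha+2}\dual{f(u),u}$.

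The final step is a discrete Jensen (reverse-Hölder) inequality converting $\dual{f(u),u}$ into a power of $\vnorm{u}$. Writing $m_i=(1,\phi_i)$ and $S=\sum_{i=0}^{m-1}m_i$, I would apply Jensen with the convex map $t\mapsto t^{(\alpha+2)/2}$ (valid since $\alpha>0$) to the probability weights $m_i/S$ and the values $u(x_i)^2$, obtaining $\sum_i m_i u(x_i)^{\alpha+2}\ge S^{-\alpha/2}\vnorm{u}^{\alpha+2}$. The sharp constant $N^{\alpha/2}$ then follows from $S\le 1/N$: the partition-of-unity identity $\sum_{i=0}^m\phi_i\equiv 1$ gives $\sum_{i=0}^m m_i=(1,1)=\int_0^1 x^{N-1}\,dx=1/N$, so discarding the nonnegative boundary mass $m_m$ leaves $S\le 1/N$ and hence $S^{-\alpha/2}\ge N^{\alpha/2}$. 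Combining these estimates produces $\dual{f(u),u}\ge N^{\alpha/2}\vnorm{u}^{\alpha+2}$, which is exactly \eqref{eq:J10}.

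I expect the main obstacle to be conceptual rather than computational, and it is concentrated in two places. First, the choice to test with $u_h^n$ is what makes the explicit stiffness term collapse to $\|u_x\|^2$ and keeps the remainder term $\vnorm{w-u}^2$ on the \emph{favourable} side of the inequality; the same manoeuvre fails for \textup{(ML--1)}, where testing with $u_h^{n+1}$ forces a remainder of the wrong sign and a nonlinear pairing mixing the two time levels, which is presumably the obstruction noted in the remark preceding this subsection. Second, securing the \emph{optimal} constant $N^{\alpha/2}$—as opposed to a generic mesh-independent one—relies on the exact identity $\sum_{i=0}^m(1,\phi_i)=1/N$ and the resulting bound $S\le 1/N$, so this bookkeeping must be done carefully.
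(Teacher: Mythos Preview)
Your proposal is correct and follows essentially the same route as the paper's proof: test \textup{(ML--2)} with $\chi=u_h^n$, use the polarization identity to drop the nonnegative remainder, invoke $K_h(u_h^n)\le 0$ to absorb $\|u_x\|^2$, and finish with a H\"older/Jensen step using $\sum_{i=0}^{m-1}(1,\phi_i)\le 1/N$. The only cosmetic difference is that the paper phrases the final step as H\"older's inequality rather than Jensen, and writes the mass bound $(1/N)^{\alpha/(\alpha+2)}$ without spelling out the partition-of-unity argument you give.
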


\begin{proof}
Substituting $\chi_{h}=u_{h}^{n}$ \tn{in} \textup{(ML--2)}, we obtain
\[
\dual{\frac{u_{h}^{n+1}-u_{h}^{n}}{\tau_{n}},u_{h}^{n}}+A(u_{h}^{n},u_{h}^{n})=\dual{u_{h}^{n}(u_{h}^{n})^{\alpha},u_{h}^{n}}.
\]
We note that 
\[
\dual{u_{h}^{n+1}-u_{h}^{n},u_{h}^{n}}
\le \dual{u_{h}^{n+1}-u_{h}^{n},\frac{1}{2}(u_{h}^{n+1}+u_{h}^{n})}
=\frac{1}{2\tau_n}(\vnorm{u_h^{n+1}}^2-\vnorm{u_h^{n}}^2).
\]
\tn{By the} decreasing property of $K_{h}(u_{h}^{n})$, \tn{we have} 
\[
 \|(u_{h}^{n})_{x}\|^{2}\le\frac{2}{\alpha+2}\dual{(u_{h}^{n})^{\alpha+2},1}\quad (n\ge n').
\]
Combining \tn{these results}, we get 
\[
\frac{1}{2}\cdot\frac{1}{\tau_{n}}(\vnorm{u_{h}^{n+1}}^{2}-\vnorm{u_{h}^{n}}^{2})
\ge\frac{\alpha}{\alpha+2}\dual{(u_{h}^{n})^{\alpha+2},1}.
\]

Using H{\" o}lder's inequality, we calculate 
\[
 \vnorm{u_{h}^{n}}^{2}\le \left( 1/N\right)^{\frac{\alpha}{\alpha+2}}\cdot\dual{(u_{h}^{n})^{\alpha+2},1}^{\frac{2}{\alpha+2}}.
\]
\tn{We thereby} deduce \eqref{eq:J10}. 
\end{proof}

\begin{lemma}
\label{prop:5.5}
If $J_{h}(u_{h}^{n_{0}})\le 0$ and $\vnorm{u_{h}^{n_{0}}}\ge 1$ for some integer $n_0\ge 0$, then we have  
\begin{equation}
 \hat{T}_{\infty}(h)\le t_{n_{0}}+\left\{\frac{\alpha+2}{\alpha^{2}}N^{-\frac{\alpha}{2}}+\tau\left(  1+\frac{2}{\alpha}\right)\right\}\vnorm{u_{h}^{n_{0}}}^{-\alpha}.
\label{eq:T10}
\end{equation}
\end{lemma}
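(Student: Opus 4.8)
The plan is to convert the sequence $\vnorm{u_{h}^{n}}^{2}$ into a geometrically growing one by feeding the explicit time-increment control \eqref{time1} into the differential inequality of Lemma~\ref{prop:5.4}, and then to bound $\hat T_{\infty}(h)$ by summing the resulting geometric series of step sizes. First I would note that the hypothesis $K_{h}(u_{h}^{n_{0}})\le 0$ (the symbol $J_{h}$ in the statement is $K_{h}$) combines with the non-increase of $K_{h}(u_{h}^{n})$ from Lemma~\ref{prop:5.3a} to give $K_{h}(u_{h}^{n})\le 0$ for every $n\ge n_{0}$, so Lemma~\ref{prop:5.4} applies with $n'=n_{0}$. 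Writing $a_{n}=\vnorm{u_{h}^{n}}^{2}$, that lemma reads $a_{n+1}-a_{n}\ge \frac{2\alpha}{\alpha+2}N^{\alpha/2}\tau_{n}\,a_{n}^{(\alpha+2)/2}$; in particular the right-hand side is non-negative, so $a_{n}$ is non-decreasing.

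Since $a_{n_{0}}\ge 1$ by hypothesis, monotonicity forces $\vnorm{u_{h}^{n}}\ge 1$ for all $n\ge n_{0}$, so the minimum in \eqref{time1} is always realized by its second argument and $\tau_{n}=\tau\vnorm{u_{h}^{n}}^{-\alpha}=\tau a_{n}^{-\alpha/2}$. Substituting this exact value of $\tau_{n}$ into the differential inequality cancels the nonlinear factor and yields the clean recursion $a_{n+1}\ge(1+c\tau)a_{n}$ with $c=\frac{2\alpha}{\alpha+2}N^{\alpha/2}$, hence $a_{n}\ge(1+c\tau)^{\,n-n_{0}}a_{n_{0}}$. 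Because the partial sums of $\tau_{j}$ telescope, $\hat T_{\infty}(h)-t_{n_{0}}=\sum_{n\ge n_{0}}\tau_{n}=\tau\sum_{n\ge n_{0}}a_{n}^{-\alpha/2}$, and the geometric lower bound turns this into a convergent geometric series, giving $\hat T_{\infty}(h)-t_{n_{0}}\le \dfrac{\tau\,\vnorm{u_{h}^{n_{0}}}^{-\alpha}}{1-(1+c\tau)^{-\alpha/2}}$. This already shows finiteness of $\hat T_{\infty}(h)$ and the correct order, so the only remaining task is to sharpen the prefactor into the stated form.

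The main obstacle, and the only genuinely nontrivial step, is the final scalar inequality $\dfrac{\tau}{1-(1+c\tau)^{-\alpha/2}}\le \dfrac{\alpha+2}{\alpha^{2}}N^{-\alpha/2}+\tau\bigl(1+\tfrac{2}{\alpha}\bigr)$. I would set $p=\alpha/2$ and $x=c\tau$ and use the identity $\frac{\alpha+2}{\alpha^{2}}N^{-\alpha/2}=\frac{2}{\alpha c}=\frac{1}{pc}$; after multiplying through by $c$ and clearing the denominator $1-(1+x)^{-p}$, the claim reduces in a few algebraic steps to $1+(p+1)x\le(1+x)^{p+1}$, which is precisely Bernoulli's inequality for the exponent $p+1>1$ and $x>0$. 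The leading term $\frac{2}{\alpha c}$ is the $\tau\to 0$ limit of the left-hand side, while $\tau(1+\tfrac{2}{\alpha})$ is exactly the first-order correction that Bernoulli controls uniformly in $\tau$. Assembling the geometric-series bound with this scalar estimate yields \eqref{eq:T10}.
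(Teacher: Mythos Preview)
Your argument is correct and follows essentially the same route as the paper: use Lemma~\ref{prop:5.4} together with the time-increment rule \eqref{time1} to obtain the geometric growth $\vnorm{u_h^{n}}^{2}\ge(1+c\tau)^{n-n_0}\vnorm{u_h^{n_0}}^{2}$ (the paper writes $c=2C$ with $C=\frac{\alpha}{\alpha+2}N^{\alpha/2}$), then sum the resulting geometric series of step sizes. The paper stops at that point and defers the remaining scalar estimate to \cite[Corollary~2.1]{cho07}, whereas you actually carry it out via the reduction to Bernoulli's inequality $1+(p+1)x\le(1+x)^{p+1}$; this is a nice self-contained completion of the step the paper leaves to the reference.
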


\begin{proof}
From Lemma \ref{prop:5.4},
\[
\vnorm{u_{h}^{n+1}}^{2}\ge(1+2\tau_{n}C\vnorm{u_{h}^{n}}^{\alpha})\vnorm{u_{h}^{n}}^{2}=(1+2\tau C)\vnorm{u_{h}^{n}}^{2},
\]
where $C=\frac{\alpha}{\alpha+2}N^{\frac{\alpha}{2}}$. Therefore, 
\[
\lim_{n\to\infty}\vnorm{u_{h}^{n}}=\infty
\]
and, for $n\ge n_{0}$,
\[
t_{n}=t_{n_{0}}+\sum_{m=n_{0}}^{n-1}\tau_{m}=t_{n_{0}}+\sum_{m=n_{0}}^{n-1}\frac{\tau}{\vnorm{u_{h}^{m}}^{\alpha}}.
\]
The \tn{remainder} is \tn{identical to} that of \tn{Corollary 2.1 in \cite{cho07}, so the details are omitted here}.   
\end{proof}

\subsection{Proof of Theorem \ref{prop:5.7}}
\tn{To prove Theorem \ref{prop:5.7}, we} apply abstract theory by \tn{(Propositions 4.2 and 4.3) in} \cite{ss16b}. 
\tn{In this subsection}, we \tn{take} the same assumptions \tn{of} Theorem \ref{prop:5.7}, \tn{in} particular, the time-increment control \eqref{time2}. 

\begin{lemma}
\label{lem5.9}
We have 
$\displaystyle{T_{\infty}\le \liminf_{h\to0}}~\hat{T}_{\infty}(h)$. 
\end{lemma}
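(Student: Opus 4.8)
The plan is to show that the discrete scheme cannot blow up any earlier than the continuous solution, i.e.\ to bound $\hat{T}_\infty(h)$ from below by an arbitrary $T<T_\infty$. Fix such a $T$. Since $u$ is smooth on $Q_T$ by hypothesis and condition \eqref{eq:tau1} is guaranteed by \eqref{eq:tau9} (with $\delta\le 1$) together with Theorem \ref{th:tau}, Theorem \ref{th:ml2-1} applies: because the adaptive steps obey $\tau_n\le\tau=\delta\frac{\beta^2}{N+1}h^2$, the $L^\infty$ error estimate yields
\[
\sup_{0\le t_n\le T}\|u_h^n-u(\cdot,t_n)\|_{L^\infty(I)}\le C(h+\tau),
\]
and since $\tau=O(h^2)$ the right-hand side tends to $0$ as $h\to0$. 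Hence, for all sufficiently small $h$ and every node with $t_n\le T$, one has $\|u_h^n\|_{L^\infty(I)}\le r:=1+\|u\|_{L^\infty(Q_T)}$.

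Next I would convert this uniform bound into a lower bound on the increments $\tau_n$. Under \eqref{eq:ivp} and Theorem \ref{prop:1}, the solution satisfies $u_h^n\ge0$, while $\hat{\psi}_h\ge0$ with the normalization $\dual{1,\hat{\psi}_h}=(1,\hat{\psi}_h)=1$ (by Lemma \ref{prop:dual} and $\Pi_h\hat{\psi}_h=\hat{\psi}_h$). Therefore
\[
0\le I_h(u_h^n)=\dual{u_h^n,\hat{\psi}_h}\le \|u_h^n\|_{L^\infty(I)}\,\dual{1,\hat{\psi}_h}\le r\qquad(t_n\le T),
\]
so the adaptive rule \eqref{time2} forces $\tau_n=\tau\min\{1,I_h(u_h^n)^{-\alpha}\}\ge\tau\min\{1,r^{-\alpha}\}=:\tau_*>0$ at every step with $t_n\le T$.

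With this uniform floor $\tau_*$ on the steps below $T$, the grid must overshoot $T$ after finitely many steps: if $t_n\le T$ held for all $n$, then $t_n\ge n\tau_*\to\infty$, a contradiction. Thus $\hat{T}_\infty(h)=\lim_n t_n>T$ for all small $h$, whence $\liminf_{h\to0}\hat{T}_\infty(h)\ge T$; letting $T\uparrow T_\infty$ gives the claim.

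The main obstacle is the apparent circularity between the error estimate---stated as a supremum over all nodes $t_n\le T$---and the boundedness of $u_h^n$ that keeps the steps from collapsing. I would resolve this by appealing to the inductive structure of Theorem \ref{th:ml2-1}: its proof runs step-by-step in $n$ (a discrete Gr\"onwall argument valid for any step sequence with $\sup_j\tau_j=\tau$, combined with truncating $f$ to the ball of radius $r$), so the bound $\|u_h^n\|_{L^\infty(I)}\le r$ is available at each $t_n\le T$ without presupposing how many such nodes exist. One should also record that the constant $C$ and the radius $r$ stay finite for fixed $T<T_\infty$, since $\kappa(u)$ is finite on $Q_T$.
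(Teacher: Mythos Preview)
Your proof is correct and rests on the same key tool as the paper---the $L^\infty$ error estimate of Theorem~\ref{th:ml2-1} on $[0,T]$ for $T<T_\infty$---though you organize it directly (bounding $I_h(u_h^n)\le\|u_h^n\|_{L^\infty(I)}\,\dual{1,\hat\psi_h}\le r$ via the normalization, then deducing a floor $\tau_n\ge\tau_*$) whereas the paper argues by contradiction (if $\hat T_\infty(h)<M<T_\infty$ then $\tau_n\to0$ forces $I_h(u_h^n)\to\infty$, hence $\|u_h^n\|_{L^\infty(I)}\to\infty$, contradicting the error bound). Your remark on the apparent circularity is well taken and correctly resolved: since the proof of Theorem~\ref{th:ml2-1} proceeds by a step-by-step Gr\"onwall argument with constants depending on $t_n$ rather than on $n$, the bound applies at every node with $t_n\le T$ regardless of how many there are.
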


\begin{proof}
\tn{The proof is shown} by contradiction. Setting $\displaystyle{S_{\infty}=\liminf_{h\to0}}~\hat{T}_{\infty}(h)$, 
we assume that $S_{\infty}<T_{\infty}$. 
Then, there exists $h_{0}>0$ such that $\hat{T}_{\infty}(h)<  M$ for all $h\le h_{0}$, 
where $M=\frac{T_{\infty}+S_{\infty}}{2}<T_{\infty}$.
That is, for \tn{some} fixed $h\le h_{0}$, we have $t_{n}\le \hat{T}_{\infty}(h)<M$ and $I_h(u_h^n)\to \infty$ as $n\to\infty$. This implies that \tn{for some $0\le j(n)\le m-1$\tn{, we have} $u_h^{n}(x_{j(n)})\to \infty$}, and consequently $\|u_{h}^{n}\|_{L^{\infty}(I)}\to \infty$. 
However, from Theorem~\ref{th:ml2-1}, we \tn{observe that}
\[
\lim_{h\to0}\sup_{0\le t_n\le M}\|u_{h}^{n}-u(\cdot,t_{n})\|_{L^{\infty}(I)}=0 .
\]
\tn{If this expression is true, then} $T_{\infty}$ \tn{cannot be the} blow-up time of the solution $u$ of \eqref{eq:1}. 
\tn{This contradiction completes the proof.}
\end{proof}

\begin{lemma}
\label{energy_error}
For any $T<T_{\infty}$, \tn{we have}
\[
\lim_{h\to0}\sup_{0\le t_{n}\le T}|I_{h}(u_{h}^{n})-I(u(\cdot,t_{n}))|=0.
\]
\end{lemma}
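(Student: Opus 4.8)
The plan is to bound the difference $|I_h(u_h^n)-I(u(\cdot,t_n))|$ by inserting the intermediate quantity $I_h(u(\cdot,t_n))$ and splitting via the triangle inequality into
\[
|I_h(u_h^n)-I(u(\cdot,t_n))|\le |I_h(u_h^n)-I_h(u(\cdot,t_n))|+|I_h(u(\cdot,t_n))-I(u(\cdot,t_n))|.
\]
The first term measures the discrete functional applied to the numerical error, and the second is a consistency error for the functional $I_h$ applied to the exact solution. For the first term I would use the definition $I_h(v)=\dual{v,\hat\psi_h}$, so that $I_h(u_h^n)-I_h(u(\cdot,t_n))=\dual{u_h^n-u(\cdot,t_n),\hat\psi_h}$ (reading $u(\cdot,t_n)$ through its interpolant, which the quadrature $\dual{\cdot,\cdot}$ only sees at the nodes). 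By Cauchy--Schwarz in the $\vnorm{\cdot}$ inner product and then Lemma \ref{prop:norm} to pass to $\|\cdot\|$, this is controlled by $\|u_h^n-\Pi_h u(\cdot,t_n)\|\cdot\vnorm{\hat\psi_h}$; since $\vnorm{\hat\psi_h}$ is bounded uniformly in $h$ (the normalization $\int_I x^{N-1}\hat\psi_h\,dx=1$ together with Lemma \ref{prop:norm} and Proposition \ref{conv:eigenvalue-main}(iii) keeps $\hat\psi_h$ bounded in the relevant norm), the first term is dominated by the $L^\infty$ error from Theorem \ref{th:ml2-1}, hence tends to $0$ uniformly on $0\le t_n\le T$.

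For the second term I would write out the definitions explicitly:
\[
I_h(u(\cdot,t_n))-I(u(\cdot,t_n))=\int_I x^{N-1}\Pi_h\!\big(u(\cdot,t_n)\hat\psi_h\big)\,dx-\int_I x^{N-1}u(\cdot,t_n)\psi\,dx.
\]
I would split this once more by inserting $\int_I x^{N-1}u(\cdot,t_n)\hat\psi_h\,dx$. The resulting interpolation-error piece $\int_I x^{N-1}\big[\Pi_h(u\hat\psi_h)-u\hat\psi_h\big]\,dx$ is bounded by Lemma \ref{prop:interval} applied to the piecewise-quadratic function $u(\cdot,t_n)\hat\psi_h$, giving a factor $Ch^2\|x^{N-1}(u\hat\psi_h)_{xx}\|_{L^1(I)}$; expanding the second derivative by the product rule and using $\kappa(u)<\infty$ together with the uniform $H^1$-type bound on $\hat\psi_h$, this is $O(h)$ (or better). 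The remaining piece $\int_I x^{N-1}u(\cdot,t_n)(\hat\psi_h-\psi)\,dx$ is bounded by $\|u(\cdot,t_n)\|\cdot\|\hat\psi_h-\psi\|$, and by the Poincar\'e inequality $\|\hat\psi_h-\psi\|\le C\|(\hat\psi_h-\psi)_x\|$, which tends to $0$ by Proposition \ref{conv:eigenvalue-main}(iii). Both subterms therefore vanish as $h\to0$, uniformly on the compact time interval since $\|u(\cdot,t_n)\|$ is bounded by $\kappa(u)$ there.

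The main obstacle I anticipate is handling the eigenfunction $\hat\psi_h$ cleanly, since it is itself an $h$-dependent object that enters both terms. The convergence $\|(\hat\psi_h-\psi)_x\|\to0$ from Proposition \ref{conv:eigenvalue-main}(iii) is the crucial input, but to use it I must first establish that $\hat\psi_h$ is bounded uniformly in $h$ in whatever norm appears (for the Cauchy--Schwarz and product-rule estimates), which requires combining the normalization, the norm equivalence of Lemma \ref{prop:norm}, and the $H^1$ convergence. A subtle point is that $I_h$ uses the lumped product $\Pi_h(v\hat\psi_h)$ rather than $\dual{v,\hat\psi_h}$ literally for the exact-solution term, so I must verify via Lemma \ref{prop:dual} that these two descriptions of $I_h$ agree and keep the bookkeeping consistent. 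Once the uniform control on $\hat\psi_h$ is in place, the remaining estimates are routine applications of the already-established lemmas, and combining the two vanishing contributions yields the claim.
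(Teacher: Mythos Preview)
Your overall strategy---split by an intermediate quantity, control the pieces via Theorem \ref{th:ml2-1} and Proposition \ref{conv:eigenvalue-main}(iii)---is the same as the paper's. However, your choice of intermediate point, $I_h(u(\cdot,t_n))$, introduces a genuine gap in the second term. You claim that $u(\cdot,t_n)\hat\psi_h$ is piecewise quadratic and invoke Lemma \ref{prop:interval}, but this is false: $\hat\psi_h$ is piecewise linear, while $u(\cdot,t_n)$ is the smooth exact solution of the PDE, not a piecewise polynomial. Their product is therefore not piecewise quadratic, and Lemma \ref{prop:interval} (as stated and proved, especially for the interval $I_1$ near the origin where the weighted estimate relies on the explicit quadratic form) does not apply.

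The paper avoids this by inserting instead the auxiliary functional $\tilde I_h(u_h^n)=\int_I x^{N-1}u_h^n\hat\psi_h\,dx$, keeping the \emph{discrete} solution $u_h^n\in S_h$ in the product. Then $u_h^n\hat\psi_h$ is genuinely piecewise quadratic, Lemma \ref{prop:interval} applies, and $(u_h^n\hat\psi_h)_{xx}=2(u_h^n)_x(\hat\psi_h)_x$ on each element; a Cauchy--Schwarz followed by the elementary bound $\|(u_h^n)_x\|\le Ch^{-1}\|u_h^n\|_{L^\infty}$ gives $|I_h(u_h^n)-\tilde I_h(u_h^n)|\le Ch\|u_h^n\|_{L^\infty}\|\hat\psi_h'\|$. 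The remaining piece $|\tilde I_h(u_h^n)-I(u(\cdot,t_n))|$ is then handled exactly as in your last paragraph. Your argument can be salvaged (e.g.\ by first replacing $u(\cdot,t_n)$ with $\Pi_h u(\cdot,t_n)$ and controlling the additional interpolation error, or by proving a version of Lemma \ref{prop:interval} for piecewise-$C^2$ functions), but as written the key lemma is invoked on a function that does not satisfy its hypothesis.
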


\begin{proof}
We derive \tn{separate} estimations for $|I_{h}(u_{h}^{n})-\tilde{I}_{h}(u_{h}^{n})|$ and $|\tilde{I}_{h}(u_{h}^{n})-I(u(\cdot,t_{n}))|$, where $\tilde{I}_{h}(v)$ denotes \tn{the} auxiliary functional
\[
\tilde{I}_{h}(v)=\int_{I}x^{N-1}v(x)\hat{\psi}_{h}(x)~dx.
\]
 
\tn{From} Theorem \ref{th:ml2-1}, Lemma \ref{prop:interval} and Proposition \ref{conv:eigenvalue-main} (iii), we \tn{first} derive  
\begin{align*}
|I_{h}(u_{h}^{n})-\tilde{I}_{h}(u_{h}^{n})|
&\le Ch^{2}\|x^{N-1}(u_{h}^{n} \hat{\psi}_{h})_{xx}\|_{L^{1}(I)} \\
&\le Ch^{2} \|(u_{h}^{n})_x\| \cdot \|\hat{\psi}'_{h}\| \\
& \le Ch\|u_{h}^{n}\|_{L^\infty(I)}\|\hat{\psi}'_{h}\| \\
& \le Ch (\|u(\cdot,t_n)\|_{L^\infty(I)}+1)(\|\psi'\|+1),
\end{align*}
where we have used \tn{the elemental} inequality 
$\|\psi_x\|\le C h^{-1}\|\psi\|_{L^{\infty}(I)}$ for $\psi\in S_{h}$.
This implies that 
$|I_{h}(u_{h}^{n})-\tilde{I}_{h}(u_{h}^{n})| \to 0$ as $h\to 0$.


\tn{Conversely}, as $h\to 0$, we have 
\begin{align*}
|\tilde{I}_{h}(u_{h}^{n})-I(u(\cdot,t_{n}))|
&=\left|\int_{I}x^{N-1}(u_{h}^{n}-u(\cdot,t_{n}))\hat{\psi}_{h}~dx\right|+\left|\int_{I}x^{N-1}u(\cdot,t_{n})(\hat{\psi}_{h}-\psi)~dx\right|\\
&\le \|u_{h}^{n}-u(\cdot,t_{n})\|\cdot\|\hat{\psi}_{h}\|+\|u(\cdot,t_{n})\|\cdot\|\hat{\psi}_{h}-\psi\| \to 0.
\end{align*}
\tn{This expression concludes the proof}. 
\end{proof}

The following is a readily obtainable consequence of Lemma \ref{energy_error}. 
\begin{lemma}
\label{lem:s0}
There exists a nonnegative integer $n_{0}=n_{0}(h)$ such that
$I_{h}(u_{h}^{n_{0}})>s_{0}$. 
\end{lemma}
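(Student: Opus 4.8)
The plan is to combine the blow-up hypothesis $I(u(\cdot,t))\to\infty$ as $t\to T_\infty-0$ (assumed in Theorem \ref{prop:5.7}) with the uniform convergence $I_h(u_h^n)\to I(u(\cdot,t_n))$ furnished by Lemma \ref{energy_error}. First I would fix a reference time $t^\ast<T_\infty$ at which the continuous functional already overshoots the threshold by a definite margin, say $I(u(\cdot,t^\ast))>s_0+1$. Such a $t^\ast$ exists because $I(u(\cdot,t))$ diverges and $s_0$ is bounded uniformly in $h$ (the relevant discrete threshold is controlled through $\hat{\mu}_h\to\mu$, Proposition \ref{conv:eigenvalue-main}, so one may simply take the margin large enough to dominate $s_0$ for all small $h$). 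Since $u$ is smooth on $[0,T]$ for every $T<T_\infty$, the map $t\mapsto I(u(\cdot,t))$ is continuous, so there is a radius $\delta>0$ with $I(u(\cdot,t))>s_0+1$ throughout $[t^\ast-\delta,t^\ast+\delta]\subset[0,T_\infty)$; I then freeze $T=t^\ast+\delta<T_\infty$, independently of $h$.

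The second step is to place a grid point inside this interval. Because the time increments obey $\tau_n\le\tau=\delta\,\beta^2 h^2/(N+1)\to0$ by \eqref{time2} and \eqref{eq:tau9}, the partition becomes arbitrarily fine, while Lemma \ref{lem5.9} gives $\hat{T}_\infty(h)>t^\ast-\delta$ for all sufficiently small $h$, so the grid $t_0<t_1<\cdots$ actually penetrates the interval. Choosing $n_0=n_0(h)$ as the first index with $t_{n_0}>t^\ast-\delta$, the elementary bound $t_{n_0}=t_{n_0-1}+\tau_{n_0-1}\le(t^\ast-\delta)+\tau\le t^\ast+\delta=T$ holds once $\tau\le2\delta$, whence $t_{n_0}\in(t^\ast-\delta,t^\ast+\delta]\subset[0,T]$ and in particular $I(u(\cdot,t_{n_0}))>s_0+1$.

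Finally I would apply Lemma \ref{energy_error} on $[0,T]$: for $h$ small enough the uniform error $\sup_{0\le t_n\le T}|I_h(u_h^n)-I(u(\cdot,t_n))|$ drops below $1$, and therefore $I_h(u_h^{n_0})>I(u(\cdot,t_{n_0}))-1>s_0$, which is the asserted conclusion. The one point requiring care — and the reason Lemma \ref{lem5.9} is invoked — is that both the adaptive grid and the convergence in Lemma \ref{energy_error} depend on $h$: one must guarantee that, for the \emph{same} small $h$, the grid already reaches into $[t^\ast-\delta,t^\ast+\delta]$ (so that an index $n_0$ with $t_{n_0}\le T$ exists) and that the energy-functional error is simultaneously below the margin there. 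Matching these two $h$-thresholds is the main bookkeeping obstacle, but it is routine once $t^\ast$, $\delta$, and $T$ have been chosen independently of $h$; this is what makes the statement a \emph{readily obtainable} consequence of Lemma \ref{energy_error}.
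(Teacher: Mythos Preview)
Your argument is correct and matches what the paper has in mind: it declares Lemma~\ref{lem:s0} a ``readily obtainable consequence of Lemma~\ref{energy_error}'' and gives no further details, so the route you describe---pick $t^\ast$ and a neighbourhood where $I(u(\cdot,t))>s_0+1$, place a grid point there, and transfer the lower bound via Lemma~\ref{energy_error}---is exactly the intended one.

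Two minor remarks. First, your parenthetical about $s_0$ being ``bounded uniformly in $h$'' via $\hat{\mu}_h\to\mu$ is superfluous: in this paper $s_0$ is the constant from Lemma~\ref{lem5.11}, defined through the \emph{continuous} eigenvalue $\mu$, and therefore does not depend on $h$ at all. Second, invoking Lemma~\ref{lem5.9} to ensure the grid reaches $[t^\ast-\delta,t^\ast+\delta]$ is legitimate (its proof relies only on Theorem~\ref{th:ml2-1}, so there is no circularity), but you can also bypass it: if $\hat{T}_\infty(h)<t^\ast-\delta$, then $\sum_n\tau_n<\infty$ forces $\tau_n\to0$, hence $I_h(u_h^n)\to\infty$ by the definition \eqref{time2} of $\tau_n$, and $I_h(u_h^{n_0})>s_0$ is immediate for some $n_0$. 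Either route is fine.
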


The following lemma is elementary and \tn{was} originally stated \tn{as Lemma 3.1 in \cite{cho07}}. 

\begin{lemma}
\label{lem5.11}
There exists $s_0>1$ satisfying 
\[
\frac12 f(s)+(1+\mu) s\le f(s)\quad (s\ge s_{0}),
\]
where $f(s)=s^{1+\alpha}$. 
\end{lemma}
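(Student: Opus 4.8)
The plan is to find an explicit threshold $s_0$ by comparing the growth of the linear term against the superlinear nonlinearity. The claimed inequality $\frac12 f(s) + (1+\mu)s \le f(s)$ is equivalent, after subtracting $\frac12 f(s)$ from both sides, to the simpler requirement
\begin{equation*}
(1+\mu)s \le \frac12 f(s) = \frac12 s^{1+\alpha}.
\end{equation*}
Since we only care about $s \ge s_0 > 1 > 0$, we may divide through by $s$ without changing the direction of the inequality, reducing the whole statement to
\begin{equation*}
1+\mu \le \tfrac12 s^{\alpha}, \qquad\text{i.e.}\qquad s^{\alpha} \ge 2(1+\mu).
\end{equation*}

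First I would solve this last inequality explicitly: it holds precisely when $s \ge \bigl(2(1+\mu)\bigr)^{1/\alpha}$. Therefore I would simply \emph{define}
\begin{equation*}
s_0 = \max\Bigl\{\,2,\ \bigl(2(1+\mu)\bigr)^{1/\alpha}\,\Bigr\},
\end{equation*}
which is a fixed positive number because $\mu>0$ is the first eigenvalue from \eqref{eq:evp} and $\alpha>0$ is the nonlinearity exponent. Taking the maximum with $2$ guarantees $s_0 > 1$ as required by the statement. Then for every $s \ge s_0$ one has $s^\alpha \ge 2(1+\mu)$ by monotonicity of $t \mapsto t^\alpha$ on $(0,\infty)$, and reversing the algebraic reduction above yields the desired inequality. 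The verification is entirely elementary: monotonicity of the power function and a single division by the positive quantity $s$.

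There is essentially no obstacle here; the lemma is stated as ``elementary,'' and the only point requiring any care is the bookkeeping that ensures $s_0 > 1$ (hence the explicit maximum with $2$) and that the division by $s$ is legitimate (guaranteed since $s \ge s_0 > 0$). I would present the argument in the forward direction for readability: assume $s \ge s_0$, deduce $s^\alpha \ge 2(1+\mu)$, multiply by $\frac12 s > 0$ to get $(1+\mu)s \le \frac12 s^{1+\alpha}$, and finally add $\frac12 f(s) = \frac12 s^{1+\alpha}$ to both sides to recover $\frac12 f(s) + (1+\mu)s \le s^{1+\alpha} = f(s)$. This closes the proof.
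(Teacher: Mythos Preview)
Your proof is correct and is exactly the elementary computation one expects; the paper itself does not give a proof but simply labels the lemma as elementary and cites \cite{cho07}, so your argument is essentially what the authors had in mind.
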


We can now \tn{prove Theorem \ref{prop:5.7}}. 

\begin{proof}[Proof of Theorem \ref{prop:5.7}]
It remains to verify \tn{that}
\begin{equation}
\label{eq:butime1}
T_{\infty}\ge \limsup_{h\to 0}~\hat{T}_{\infty}(h). 
\end{equation}
To this end, we apply abstract theory \tn{(Propositions 4.2 and 4.3) in \cite{ss16b}}. 
\tn{Adopting the} notation of \cite{ss16b}, \tn{we respectively set} $X$, $X_{h}$, $G$, $H$, $J$ and $J_{h}$ in \cite{ss16b} as 
\begin{align*}
&X=\dot{H}^{1},\quad \|v\|_{X}=\|v\|_{L^{\infty}(I)},\quad X_{h}=S_{h},\\
&G(s)=\frac{1}{2}f(s)=\frac{1}{2}s^{1+\alpha},\quad H(s)=s^{\alpha}\quad (s\ge 0),\\
&J(t,v)=\int_{I}x^{N-1}v(x)\cdot\psi(x)~dx,\quad (t,v)\in(0,\infty)\times X,\\
&J_{h}(t,v_{h})=I_{h}(v_{h})=\sum_{j=0}^{m-1}v_{j}\hat{\psi}_{h}(x_{j})(1,\phi_{j}),\quad (t,v_{h})\in(0,\infty)\times X_{h}, \quad ~v_{j}=v_{h}(x_{j}).
\end{align*}  
\tn{Here}, $G$ and $H$ are functions of class (G) and class (H), \tn{respectively}. 

\tn{To} avoid \tn{unnecessary} complexity, we assume \tn{that} $n_0=0$ \tn{(see Lemma \ref{lem:s0})}.  
Our problem setting \tn{matches} problem \tn{settings} (I)--(VIII) in \tn{\S 4 of \cite{ss16b}}. 

It is readily \tn{verified} that conditions (H1), (H3), (H4) and (H5) in \tn{\S 4 of \cite{ss16b}} hold. 
We \tn{need only} check \tn{that condition} (H2) 
\begin{equation}
 \label{eq:h2}  
\partial_{\tau_{n}}I_{h}(u_{h}^{n+1}) \ge \frac12 f(I_{h}(u_{h}^{n}))
\quad (n\ge 0)
 \end{equation}
\tn{also holds}.

Substituting $\chi_{h}=\hat{\psi}_{h}\in S_{h}$ \tn{in} \textup{(ML--2)} and using the relation \eqref{eigen_h}, we have 
\[
 \partial_{\tau_{n}}I_{h}(v_{h}^{n+1})+\hat{\mu}_{h}I_{h}(v_{h}^{n+1})=\dual{f(v_{h}^{n}),\hat{\psi}_{h}}.  \]

From Proposition \ref{conv:eigenvalue-main} (i), we know \tn{that} $\hat{\mu}_{h}<\mu+1$.
Moreover, because $\dual{1,\hat{\psi}_{h}}=1$, we can apply Jensen's inequality to get 
\[
\partial_{\tau_{n}}I_{h}(u_{h}^{n+1})\ge -(\mu+1)\cdot I_{h}(u_{h}^{n})+f(I_{h}(u_{h}^{n})).
\] 

\tn{By} Lemma \ref{lem5.11}, $-(\mu+1)s+f(s)\ge\frac{1}{2}f(s)$ for $s\ge s_0$.
Because $I_{h}(u_{h}^{0})>s_{0}$ by Lemma \ref{lem:s0}, we deduce \tn{that}
\[
\partial_{\tau_{n_{0}}}I_{h}(u_{h}^{1})\ge\frac{1}{2}f(I_{h}(u_{h}^{0})).
\] 
\tn{We thus obtain} $I_{h}(v_{h}^{1})>s_{0}$. \tn{By this process}, we \tn{finally} obtain  
\[
\partial_{\tau_{n}}I_{h}(u_{h}^{n+1})\ge\frac{1}{2}f(I_{h}(u_{h}^{n})),\quad 
I_{h}(u_{h}^{n})>s_{0}\quad (n\ge 0).
\]
\end{proof}

\begin{rem}
Theorem~\ref{prop:5.7} remains true \tn{after replacing} \eqref{time2} by 
\[
\tau_{n}=\tau_{n}(h)=\tau\min\left\{1,\frac{1}{\|u_{h}^{n}\|_{L^{\infty}(I)}^{\alpha}}\right\}.
\]
However, \tn{this definition increases} the computational cost \tn{over that based on}~\eqref{time2}. 
Therefore, \tn{in the following numerical evaluation, we adopt} \eqref{time2}.
\end{rem}

\section{Numerical examples}
\label{sec:7}

This section \tn{validates our theoretical results with} numerical examples. 

\tn{We first} examine the error estimates of the solutions \tn{on a} uniform spatial mesh $x_j=jh$ ($j=0,\ldots,m$) \tn{with} $h=1/m$, \tn{regarding} the numerical solution with $h'=1/480$ as the exact solution. 
The following quantities were compared: 
\begin{align*}
\mathcal{E}_1(h)&= \|u_{h'}^{n}-u_{h}^{n}\|_{L^{1}(I)},\\
\mathcal{E}_2(h)&=\left\|u_{h'}^{n}-u_{h}^{n}\right\|=\left\|x^{\frac{N-1}{2}}(u_{h'}^{n}-u_{h}^{n})\right\|_{L^{2}(I)},\\
 \mathcal{E}_\infty (h)&= \|u_{h'}^{n}-u_{h}^{n}\|_{L^{\infty}(I)}.
\end{align*}

Fig.~\ref{fig:3} shows the results for 
$N=3$, $\alpha=4$ and $u(0,x)=\cos\frac{\pi}{2}x$. 
\tn{The} time increment \tn{was uniform} ($\tau_n=\tau=\lambda h^2,$ $n=0,1,\ldots,$ $\lambda=1/2)$ and \tn{the iterations were continued} until $t\le T=0.005$. 
\tn{In scheme} (ML-1), the theoretical convergence rate \tn{was} $h^2+\tau$ in the $\|\cdot\|$ norm (see Theorem \ref{th:ml1-1}), \tn{but was slightly deteriorated} in the $L^\infty$ norm.

\begin{figure}[htbp]
        \begin{center}
           \includegraphics[width=.4\textwidth]{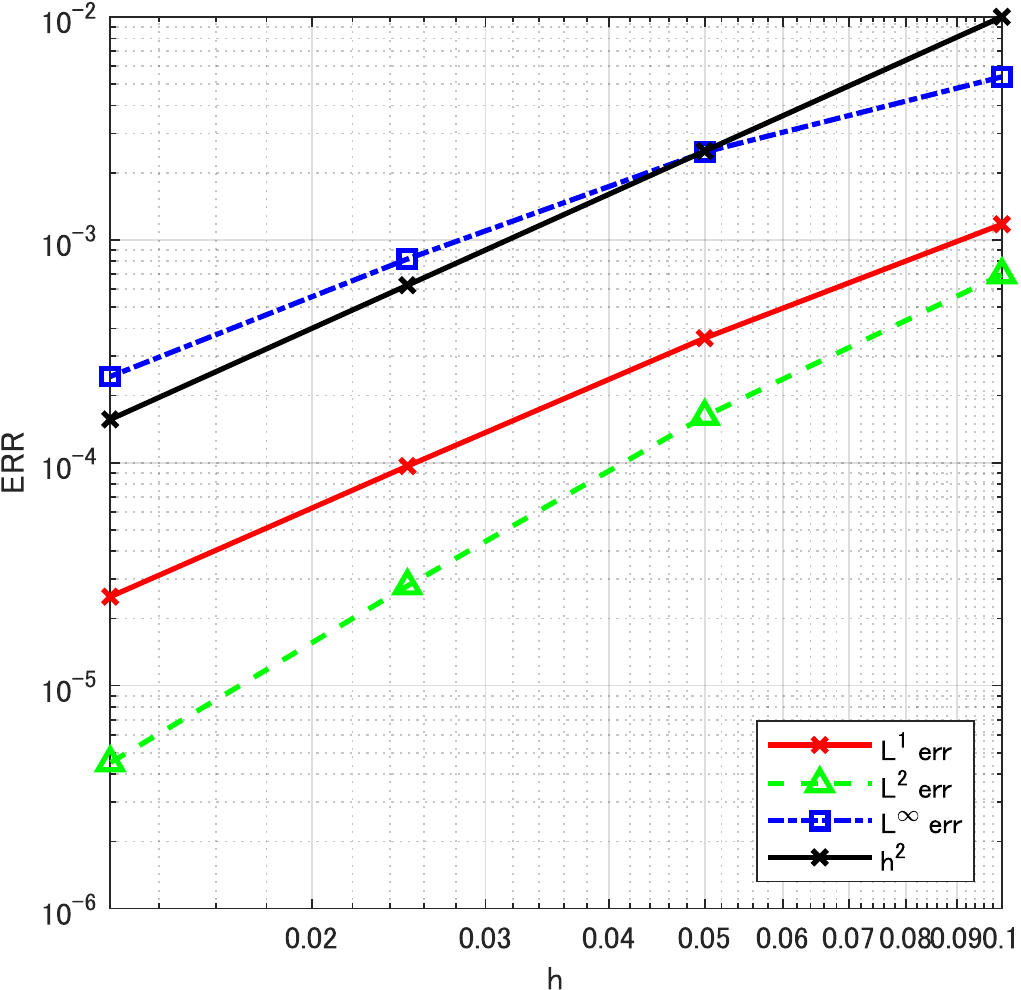}\\
        \end{center}
     \caption{\tn{Error convergences versus granularity: $N=3$, $\alpha=4$ and $u(0,x)=\cos\frac{\pi}{2}x$}}
\label{fig:3}
\end{figure}

\tn{In the case} $N=4$, which is not supported in the convergence property of \tn{the} standard symmetric FEM \cite{ns20}, \tn{we} chose $\alpha=3$ and $u(0,x)=3\cos\frac{\pi}{2}x$. 
\tn{The} errors \tn{were computed up to} $T=0.0033$ \tn{on} the non-uniform meshes \tn{with} $x_i=\sin\frac{(i-1)\pi}{2m}$ and $\tau_n$ with $\lambda=0.11$. 
\tn{As shown in} Fig.~\ref{fig:4}, the $\|\cdot\|$ norm \tn{showed second-order convergence} in \tn{both} (ML-1) \tn{and standard FEM}, \tn{but the $\|\cdot\|_{L^{\infty}(I)}$ norm showed} first-order convergence in the standard FEM.

\begin{figure}[htbp]
      \begin{minipage}{.49\textwidth}
        \begin{center}
           \includegraphics[width=.9\textwidth]{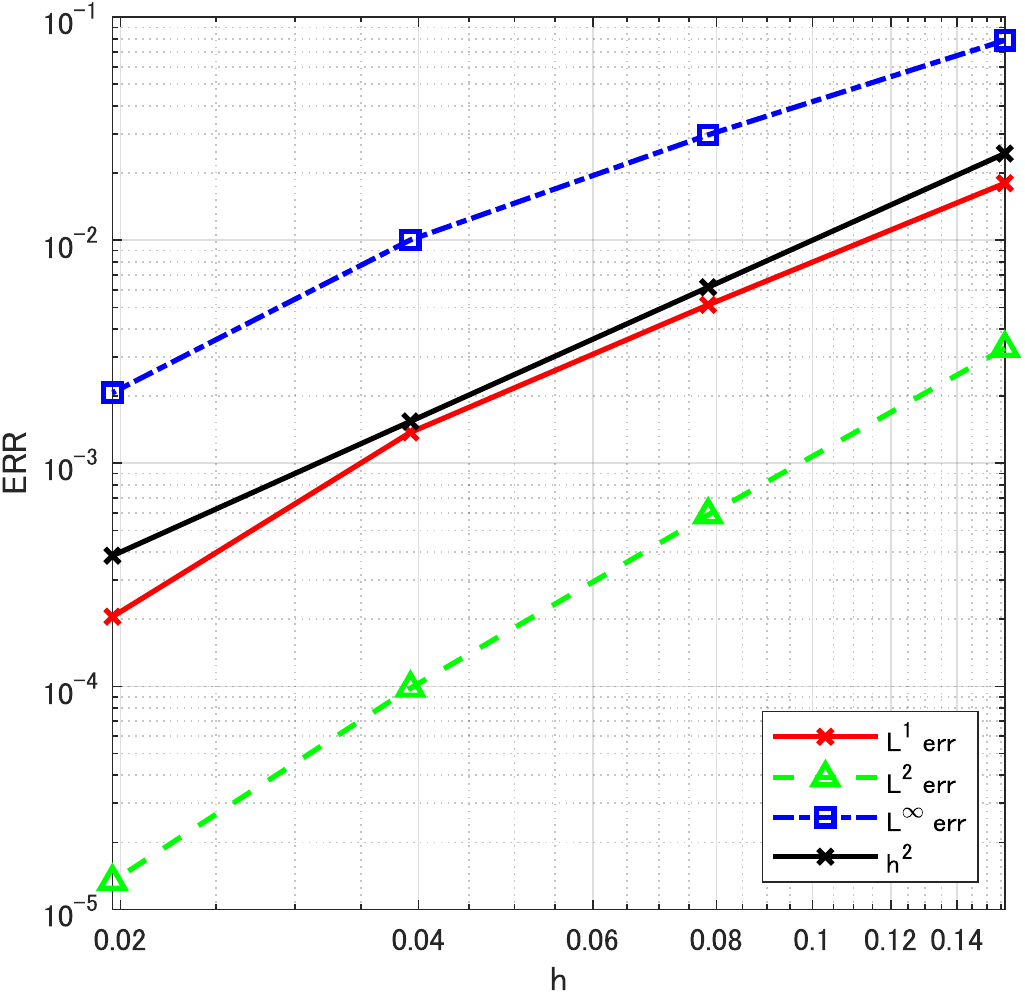} \\
         (a) \textup{(ML-1)}
        \end{center}
      \end{minipage}
      \begin{minipage}{.49\textwidth}
        \begin{center}
          \includegraphics[width=.9\textwidth]{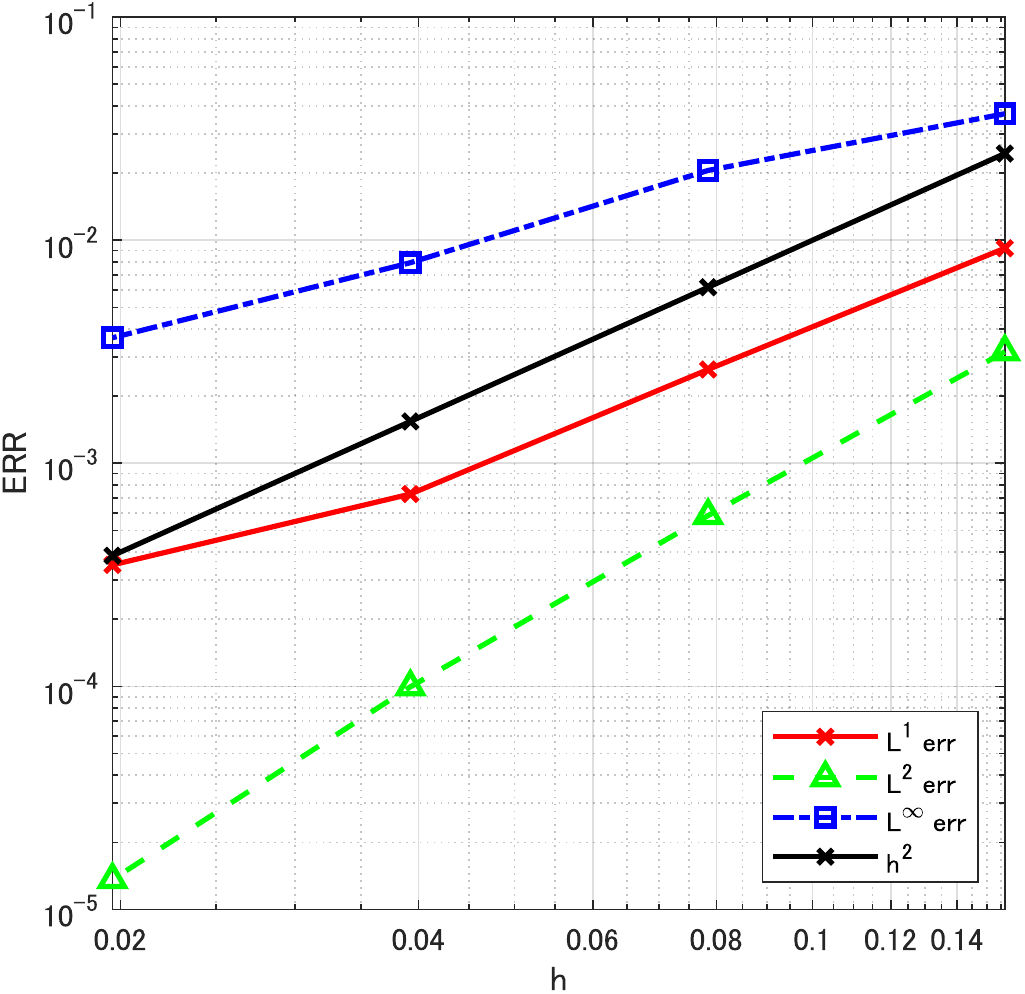}\\
          (b) Standard FEM
        \end{center}
      \end{minipage}
     \caption{\tn{Error convergences in the (ML--1) schemes (left) and the standard finite element method (right): $N=4$, $\alpha=3$ and $u(0,x)=3\cos\frac{\pi}{2}x$}}
\label{fig:4}
\end{figure}

 Second, we confirmed the non-increasing property of the energy functional $K_{h}(u_{h}^{n})$ \tn{in scheme} (ML--2) \tn{with} $N=5$, $\alpha=\frac{4}{3}$, and $u(0,x)=\cos\frac{\pi}{2}x,~13\cos\frac{\pi}{2}x$. \tn{The} time increment $\tau_{n}$ \tn{was determined} through Theorem~5.6 with $\beta=1$~and~$\delta=1$. \tn{Simulations were performed on a} uniform \tn{spatial} mesh $x_j=jh$ with $h=1/m$ and $m=50$. 
\tn{The} results \tn{(see Fig.~\ref{fig:5})} support Lemma~5.9. 
\tn{As shown in} Fig.~\ref{fig:6}, the energy functional $I_{h}(u_{h}^{n})$ \tn{increased} exponentially after~$t=0.04$~\tn{when the initial data was large}, \tn{but vanishes when the} initial data \tn{was small}. For $I_{h}(u_{h}^{n})$, we used the time increment in Theorem~5.7 with $\delta=1$.
\begin{figure}[htbp]
      \begin{minipage}{0.49\textwidth}
        \begin{center}
           \includegraphics[width=0.9\textwidth]{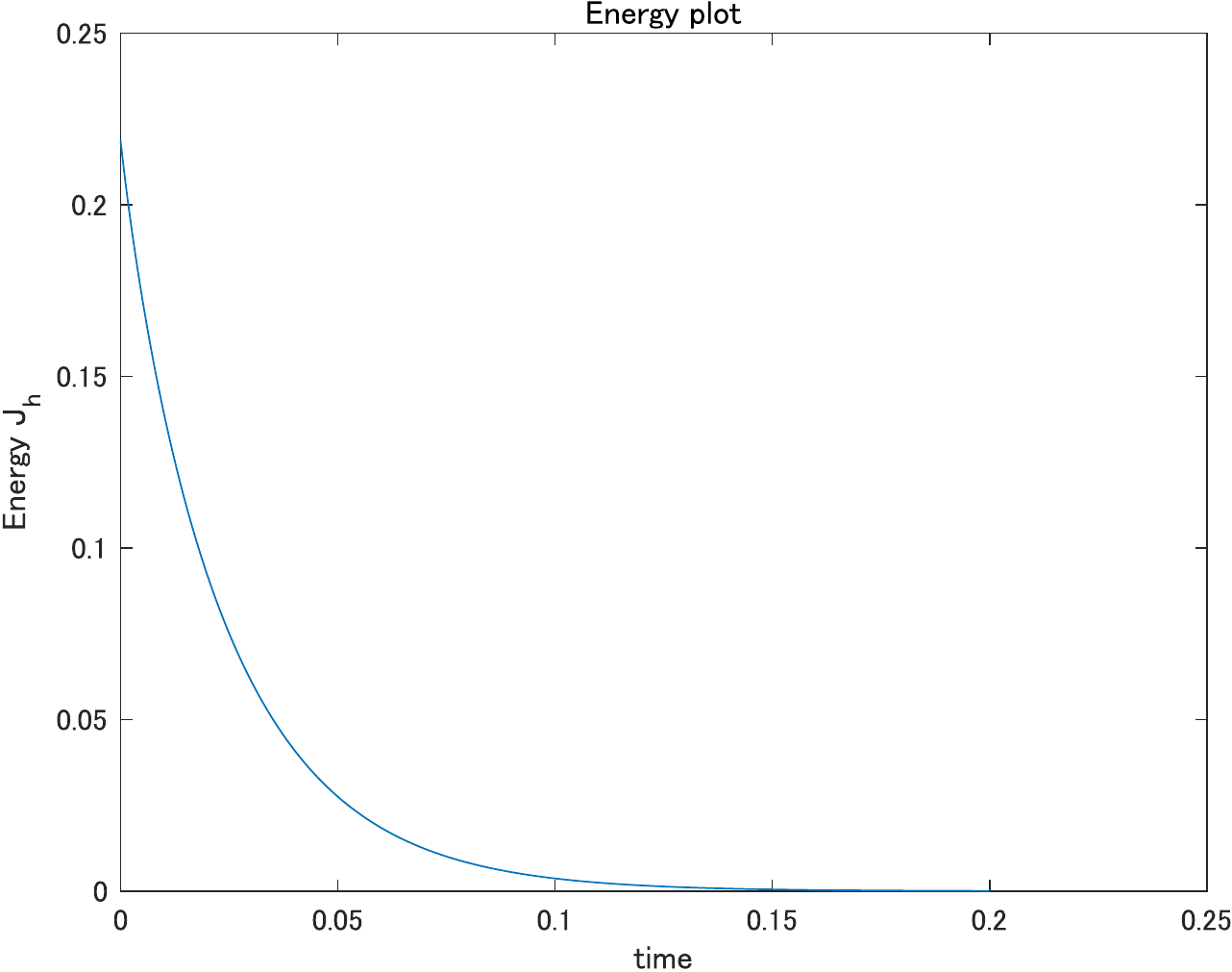} \\
         \textup{(ML--2)} \& $u(0,x)=\cos\frac{\pi}{2}x$
        \end{center}
      \end{minipage}
      \begin{minipage}{0.49\textwidth}
        \begin{center}
           \includegraphics[width=0.9\textwidth]{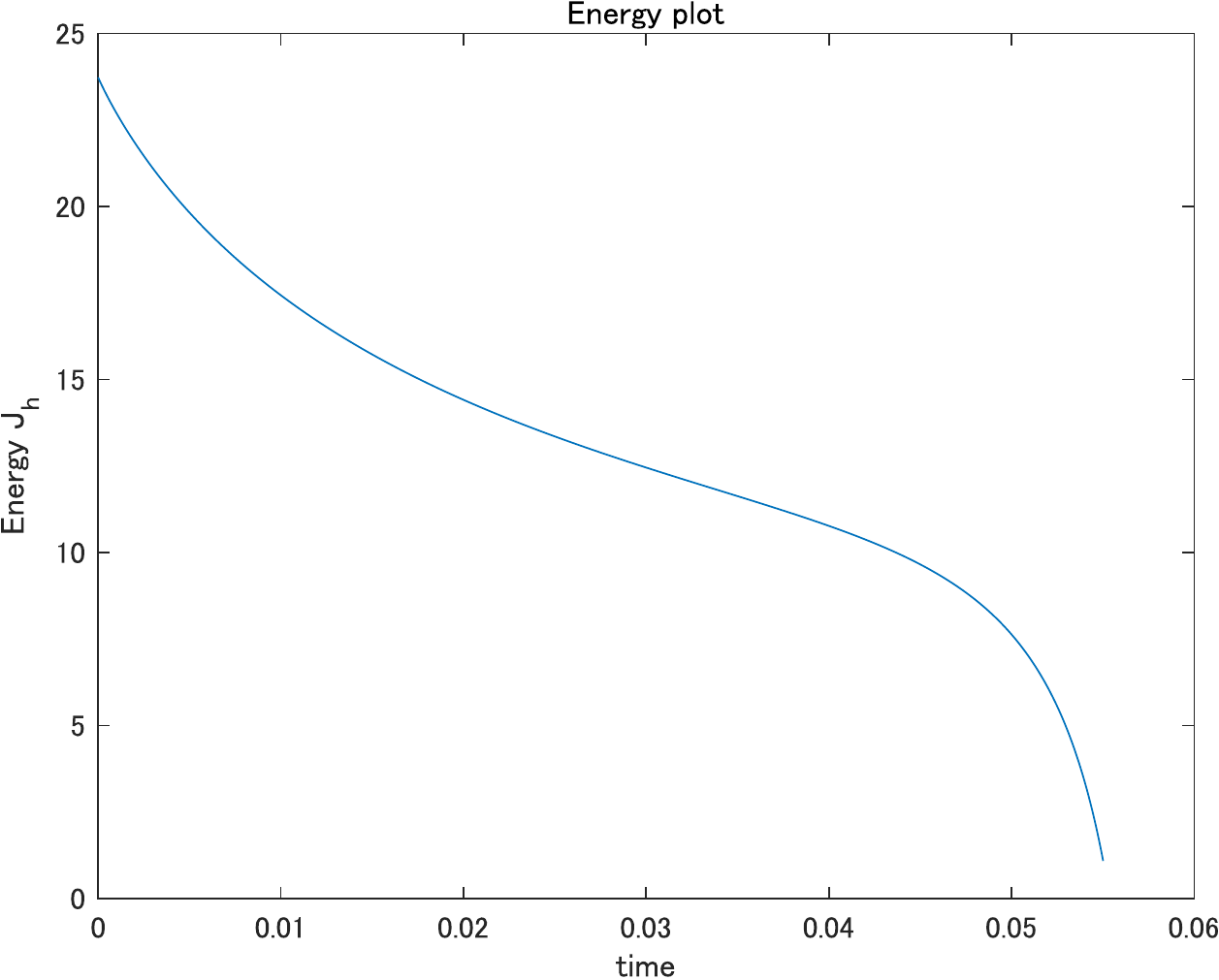}\\
         \textup{(ML--2)} \& $u(0,x)=13\cos\frac{\pi}{2}x$
        \end{center}
      \end{minipage}
     \caption{\tn{Temporal dynamics of the energy} functional $K_{h}(u_{h}^{n})$ \tn{in scheme (ML--2) with different initial conditions} }
\label{fig:5}
\end{figure}

\begin{figure}[htbp]
      \begin{minipage}{0.49\textwidth}
        \begin{center}
           \includegraphics[width=0.9\textwidth]{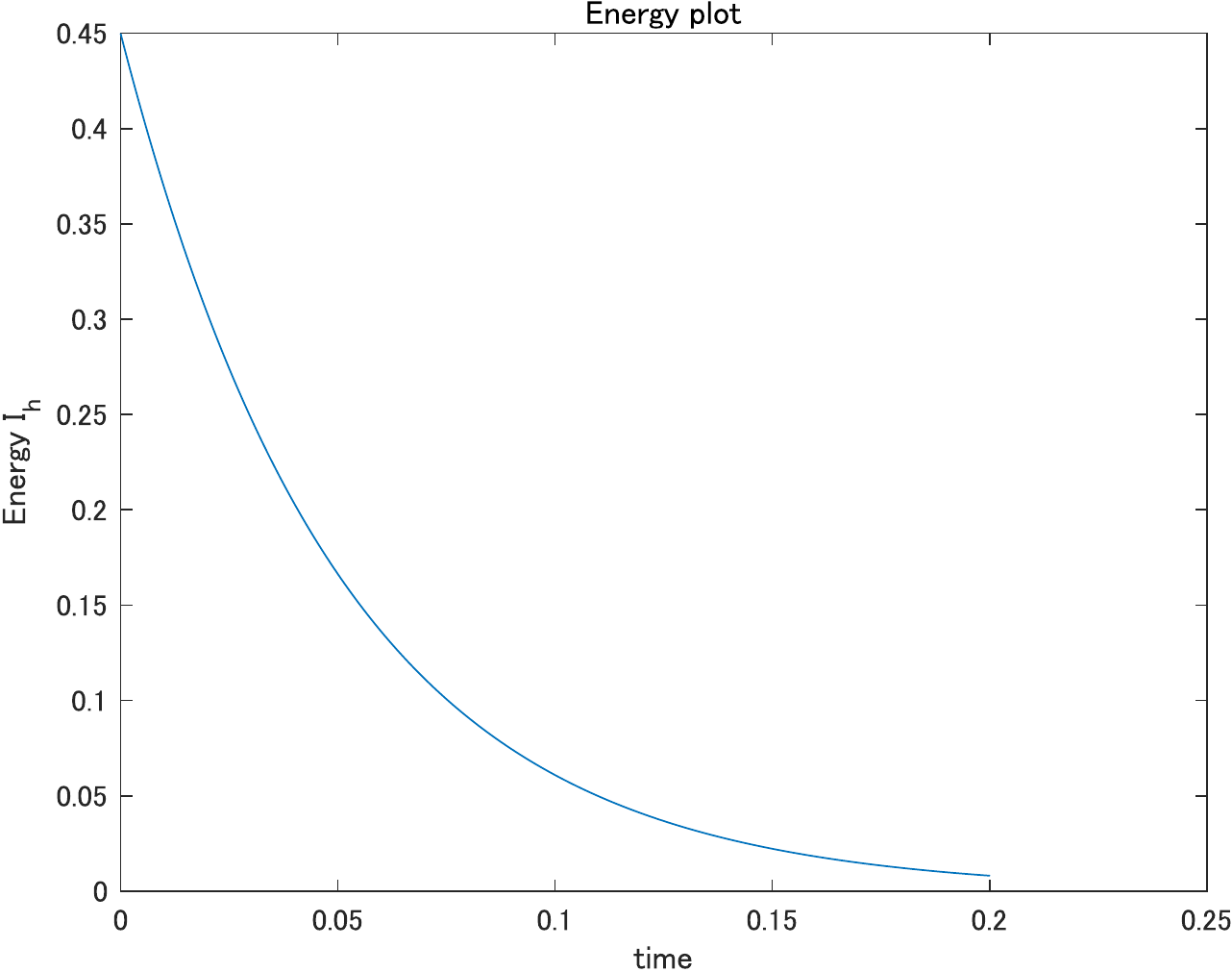} \\
         \textup{(ML--2)} \& $u(0,x)=\cos\frac{\pi}{2}x$
        \end{center}
      \end{minipage}
      \begin{minipage}{0.49\textwidth}
        \begin{center}
           \includegraphics[width=0.9\textwidth]{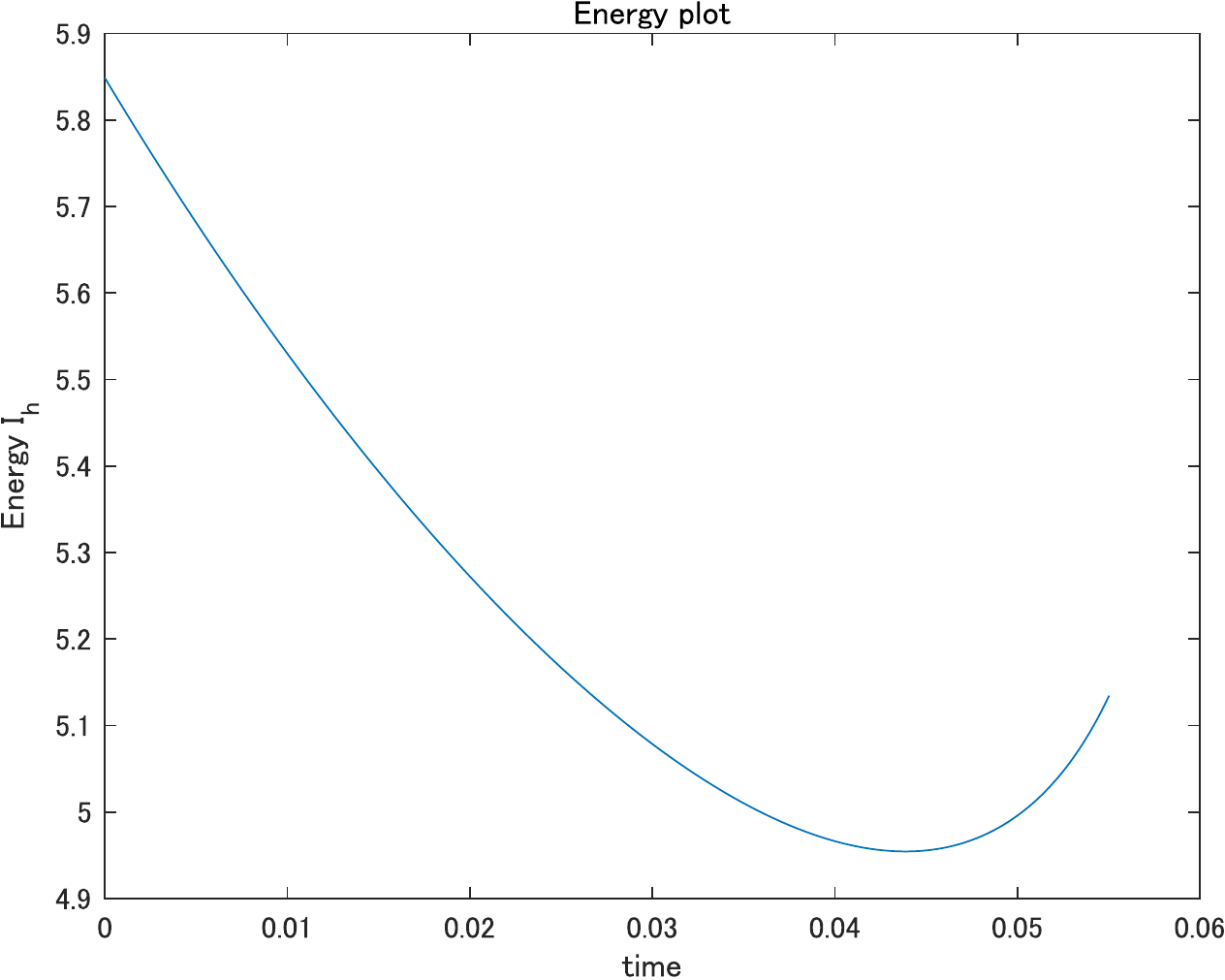}\\
         \textup{(ML--2)} \& $u(0,x)=13\cos\frac{\pi}{2}x$
        \end{center}
      \end{minipage}
     \caption{\tn{Temporal dynamics of the energy} functional $I_{h}(u_{h}^{n})$ \tn{in scheme (ML--2) with small (left) and large (right) data inputs}}
\label{fig:6}
\end{figure}

Finally, we \tn{calculated the} numerical blow-up time \tn{in scheme} (ML--2).  
\tn{Here, we} set $h=1/m~(m=16,32,64)$. \tn{The time} increments \tn{were} defined as 
\begin{align*}
\mbox{(K) } \tau_{n}&=\frac{1}{N+1}\min\left\{1,\frac{1}{\vnorm{u_{h}^{n}}^{\alpha}}\right\}\quad (\mbox{see Theorem \ref{prop:5.6}}),\\
\mbox{(I) } \tau_{n}&=\frac{1}{N+1}\min\left\{1,\frac{1}{I_{h}(u_{h}^{n})^{\alpha}}\right\}\quad (\mbox{see Theorem \ref{prop:5.7}}).
\end{align*} 
For \tn{a} comparison \tn{evaluation}, we \tn{executed} 
the FDM of Chen \cite{che92} and the FDM of Cho and Okamoto \cite{co20}. Specifically, 
set $\tau_{n}=\frac{1}{2} h^{2}\cdot\min\left\{1,\frac{1}{\|u_{h}^{n}\|_{2}^{\alpha}}\right\}$ \tn{in} Chen's FDM and 
$\tau_{n}=\frac{1}{3N}h^{2}\cdot\min\left\{1,\frac{1}{\|u_{h}^{n}\|_{2}^{\alpha}}\right\},~\sigma=\frac{3}{2N}$ \tn{in} Cho and Hamada's FDM. 

We then \tn{introduced} the \emph{truncated numerical blow-up time} $\hat{T}_M (h)$\tn{:}
\begin{equation*}
\hat{T}_M (h)=\min \left\{ t_n \mid \|u_h^n\|_\infty>M=10^{8}\right\}. 
\end{equation*}

\tn{Evaluations were performed with three parameter sets}: 
\begin{description}
 \item[Case 1] 
$N=5$, $\alpha=0.39$, and $u(0,x)=800\cos\frac{\pi}{2}x$; 
\item[Case 2] $N=4$,~$\alpha=0.49$, and $u(0,x)=800(1-x^2)$; 
\item[Case 3] $N=3$,~$\alpha=0.66$, and $u(0,x)=1000(e^{-x^{2}}-e^{-1})$.
\end{description}
\tn{Note that if $1>\frac{N\alpha}{2}$ holds true and $u^{0}\ge0$ is decreasing in $x$, then $I(u(t))$, $\|u(t)\|$ and $\|u(t)\|_{L^{\infty}(I)}$ blow up simultaneously; see
\cite{fm85}.}   
\tn{We chose these settings so that the assumptions in Theorem \ref{prop:5.6} and Theorem \ref{prop:5.7} hold true.}

Fig.~\ref{fig:10} compares the truncated numerical blow-up times $\hat{T}_M (h)$ \tn{as functions of} $h$ \tn{in the four schemes}. 
\tn{The} solution of Chen's FDM blew up \tn{later than the other schemes, whereas} that of (ML--2) with $I_{h}(u_{h}^{n})$ blew up \tn{sooner than the other schemes.}

\begin{figure}[htbp]
      \begin{minipage}{0.32\textwidth}
        \begin{center}
           \includegraphics[width=\textwidth,height=7.5cm]{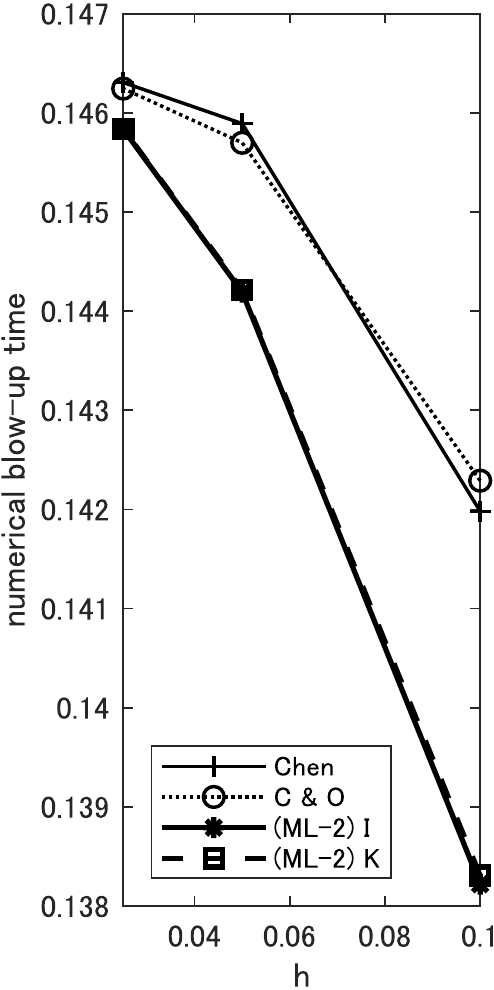} \\
         {Case 1}
        \end{center}
      \end{minipage}
      \begin{minipage}{0.32\textwidth}
        \begin{center}
           \includegraphics[width=\textwidth,height=7.5cm]{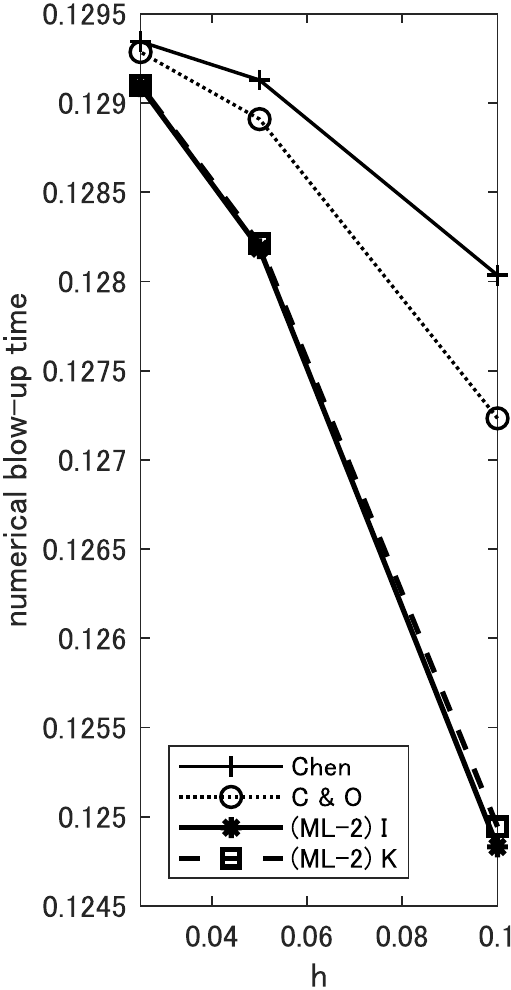}\\
         {Case 2}
\end{center}
      \end{minipage}
\begin{minipage}{0.32\textwidth}
        \begin{center}
           \includegraphics[width=\textwidth,height=7.5cm]{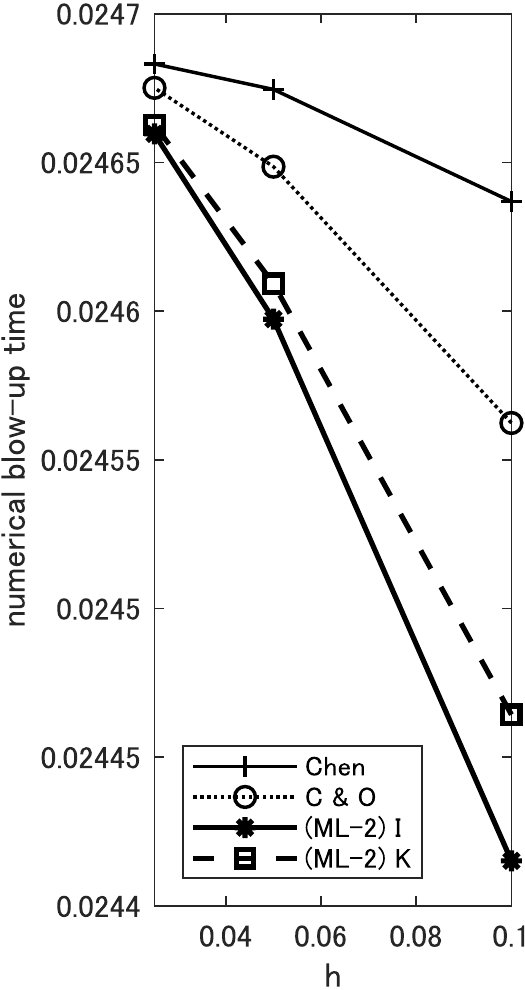}\\
         {Case 3}
        \end{center}
      \end{minipage}
     \caption{\tn{Truncated} numerical blow-up \tn{times} $\hat{T}_M (h)$ \tn{in the four schemes with three parameter settings} }
\label{fig:10}
\end{figure}

\appendix 
\section{Approximate eigenvalue problems}
\label{sec:6}

\tn{This Appendix establishes the proof of} Proposition \ref{conv:eigenvalue-main}. 
Recall that \tn{$\mu$ and $\hat{\mu}_{h}$ are the smallest eigenvalues of \eqref{eq:evp} and \eqref{eigen_h}, respectively}. Functions 
$\psi$ and $\hat{\psi}_h$ are the eigenfunctions \tn{associated} with $\mu$ and $\hat{\mu}_{h}$, respectively.


We introduce \tn{the following} linear operators $T:\dot{H}^{1}\to \dot{H}^{1}$, $T_{h}:S_{h}\to S_{h}$ and $\hat{T}_{h}:S_{h}\to S_{h}$
\begin{align*}
&A(Tv,\chi)=(v,\chi) && (\chi\in \dot{H}^{1},v\in \dot{H}^{1}),\\
&A(T_{h}v_{h},\chi_{h})=(v_{h},\chi_{h}) && (\chi_{h} \in S_{h},v_{h}\in S_{h}),\\
&A(\hat{T}_{h}v_{h},\chi_{h})=\dual{v_{h},\chi_{h}} &&(\chi_{h} \in S_{h},v_{h}\in S_{h}).
\end{align*}


\tn{We also write} $v'=v_x$ for \tn{some} function $v=v(x)$.

\begin{lemma}[\tn{Lemma 2.2 in \cite{se81}, Theorems 13 and 14 in \cite{fri64}}]
\label{lem:regularity}
For any $f_{h}\in S_{h}\subset \dot{H}^{1}$, 
\[
Tf_{h}\in H^{2}(I),~\|(Tf_{h})''\|\le C\|f_{h}\|.
\]
\end{lemma}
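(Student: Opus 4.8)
The plan is to solve the singular Sturm--Liouville problem hidden behind $T$ explicitly and then read off the estimate through a weighted Hardy inequality. Writing $u=Tf_h$, the defining relation $A(u,\chi)=(f_h,\chi)$ for all $\chi\in\dot H^1$ is the weak form of $-(x^{N-1}u')'=x^{N-1}f_h$ on $(0,1)$, together with $u(1)=0$ and the natural condition $\lim_{x\to0}x^{N-1}u'(x)=0$. First I would integrate this ODE once to obtain the closed form $u'(x)=-x^{-(N-1)}F(x)$ with $F(x)=\int_0^x s^{N-1}f_h(s)\,ds$, and integrate again using $u(1)=0$ to represent $u$ itself. Verifying that this candidate lies in $\dot H^1$ and satisfies the weak identity identifies it with $Tf_h$ and simultaneously disposes of existence and uniqueness.

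Differentiating the closed form---equivalently, rearranging the ODE---gives the pointwise identity $u''=-f_h-\frac{N-1}{x}u'$. Hence in the weighted norm $\|u''\|\le\|f_h\|+(N-1)\,\|u'/x\|$, and the whole estimate reduces to controlling $\|u'/x\|$. Since $u'/x=-x^{-N}F(x)$, one has $\|u'/x\|^2=\int_0^1 x^{-(N+1)}F(x)^2\,dx$, where I abbreviate $f(s):=s^{N-1}f_h(s)$ so that $F(x)=\int_0^x f(s)\,ds$.

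The central step is the weighted Hardy inequality $\int_0^1\big(\int_0^x f\big)^2 x^{-(N+1)}\,dx\le (2/N)^2\int_0^1 f(x)^2 x^{1-N}\,dx$, which is valid because the exponent $N+1>1$. Substituting back $f=s^{N-1}f_h$ turns the right-hand side into exactly $(2/N)^2\|f_h\|^2$, so $\|u'/x\|\le (2/N)\|f_h\|$ and therefore $\|u''\|\le\big(1+\tfrac{2(N-1)}{N}\big)\|f_h\|$, which is the asserted bound with $C=C(N)$.

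It remains to confirm $u\in H^2(I)$ in the ordinary sense. Because $f_h\in S_h$ is bounded and continuous, the representation yields $F(x)=O(x^N)$ near the origin, whence $u'(x)=O(x)$ and, from the identity for $u''$, $u''$ stays bounded as $x\to0$; away from $0$ the problem is uniformly elliptic, so $u$ is as smooth as $f_h$ permits. I expect the genuine obstacle to be the endpoint $x=0$: justifying the vanishing of the boundary term in the integration by parts (that is, the natural condition $x^{N-1}u'\to0$) and, above all, proving the weighted Hardy inequality with the correct admissible exponent. Once these are in place the estimate is immediate, and this degenerate-endpoint analysis is presumably what the abstract regularity results of \cite{se81,fri64} are invoked to supply.
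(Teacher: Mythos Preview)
Your argument is correct. The paper itself gives no proof of this lemma at all: it simply cites Lemma~2.2 of \cite{se81} and Theorems~13--14 of \cite{fri64} for the elliptic regularity near the degenerate endpoint. Your route is genuinely different and more concrete: you write down the explicit Green-type representation $u'(x)=-x^{1-N}\int_0^x s^{N-1}f_h(s)\,ds$, read off the pointwise identity $u''=-f_h-(N-1)u'/x$, and reduce the weighted bound $\|u''\|\le C\|f_h\|$ to the single weighted Hardy inequality
\[
\int_0^1 x^{-(N+1)}\Big(\int_0^x s^{N-1}f_h(s)\,ds\Big)^2\,dx\ \le\ \Big(\tfrac{2}{N}\Big)^2\int_0^1 x^{N-1}f_h(x)^2\,dx,
\]
which is the standard Hardy estimate with exponent $\alpha=N+1>1$ (and can itself be proved in two lines by integrating $F(x)^2 x^{-(N+1)}$ by parts and applying Cauchy--Schwarz, the boundary term at $0$ vanishing because $F(x)=O(x^N)$). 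This yields the explicit constant $C(N)=1+2(N-1)/N=3-2/N$, which the cited references do not provide. The only points you flag as ``obstacles''---the natural boundary condition $x^{N-1}u'\to 0$ and the Hardy inequality---are in fact immediate here because $f_h\in S_h$ is bounded, forcing $F(x)=O(x^N)$ and hence $u'(x)=O(x)$ near the origin; so your self-contained argument actually closes without any appeal to \cite{se81,fri64}.
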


For a linear operator $B:X\subset \dot{H}\to \dot{H}$, we define 
\[
 \|B\|_{1,X}=\sup_{v\in X,~v\neq0}\frac{\|(Bv)'\|}{\|v'\|}.
\]
\begin{lemma}[\tn{Lemma~3.3 in \cite{bo90}}]
\label{lem:conv_op}
$\|T-\hat{T}_{h}\|_{1,S_{h}}\to 0$ as $h\to 0$.
\end{lemma}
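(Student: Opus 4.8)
The plan is to bound $T-\hat{T}_h$ on $S_h$ by inserting the intermediate operator $T_h$ and splitting $T-\hat{T}_h=(T-T_h)+(T_h-\hat{T}_h)$, then estimating the two pieces separately in the operator norm $\|\cdot\|_{1,S_h}$, which is built from the weighted $H^1$-seminorm $\|(\cdot)_x\|=A(\cdot,\cdot)^{1/2}$. I expect $T-T_h$ to be $O(h)$ (a Ritz-projection error) and $T_h-\hat{T}_h$ to be $O(h^2)$ (a mass-lumping quadrature error), so the sum tends to $0$.

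First I would treat $T-T_h$. For $v_h\in S_h$ both $A(T_hv_h,\chi_h)=(v_h,\chi_h)$ and $A(Tv_h,\chi_h)=(v_h,\chi_h)$ hold for every $\chi_h\in S_h$, whence $A(T_hv_h-Tv_h,\chi_h)=0$; comparing with \eqref{eq:pA} this says exactly $T_hv_h=P_A(Tv_h)$. Therefore $(T-T_h)v_h=(I-P_A)Tv_h$, and the $A$-orthogonality of $P_A$ gives the best-approximation bound $\|((I-P_A)Tv_h)_x\|\le\|(Tv_h-\Pi_hTv_h)_x\|$. Using the weighted $H^1$ interpolation estimate of \cite{et84} together with the regularity $\|(Tv_h)''\|\le C\|v_h\|$ from Lemma \ref{lem:regularity} and the Poincar\'e inequality $\|v_h\|\le C\|(v_h)_x\|$ (Lemma 18.1 in \cite{tho06}), I would obtain $\|((T-T_h)v_h)_x\|\le Ch\|(v_h)_x\|$, i.e.\ $\|T-T_h\|_{1,S_h}\le Ch$.

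Next I would treat the lumping part. Setting $e_h=(T_h-\hat{T}_h)v_h\in S_h$ and subtracting the defining relations gives $A(e_h,\chi_h)=(v_h,\chi_h)-\dual{v_h,\chi_h}$ for all $\chi_h\in S_h$; the choice $\chi_h=e_h$ yields $\|(e_h)_x\|^2=(v_h,e_h)-\dual{v_h,e_h}$. The key observation is that for $w,v\in S_h$ one has $\dual{w,v}=(1,\Pi_h(wv))$ (since $\Pi_h(wv)=\sum_i w(x_i)v(x_i)\phi_i$, this follows from Lemma \ref{prop:dual}), so that $(w,v)-\dual{w,v}=\int_I x^{N-1}\bigl(wv-\Pi_h(wv)\bigr)\,dx$. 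Applying Lemma \ref{prop:interval} to the piecewise-quadratic function $wv$, using $(wv)''=2w_xv_x$ on each subinterval, and Cauchy--Schwarz in the weighted inner product, I get $|(w,v)-\dual{w,v}|\le Ch^2\|w_x\|\,\|v_x\|$. With $w=v_h$, $v=e_h$ this gives $\|(e_h)_x\|^2\le Ch^2\|(v_h)_x\|\,\|(e_h)_x\|$, hence $\|T_h-\hat{T}_h\|_{1,S_h}\le Ch^2$.

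Combining the two estimates yields $\|T-\hat{T}_h\|_{1,S_h}\le Ch+Ch^2\to 0$ as $h\to0$. The main obstacle is the first piece: the lumping error is a clean $O(h^2)$ quadrature bound assembled entirely from tools already present in the paper, whereas the $O(h)$ control of $(I-P_A)Tv_h$ in the \emph{weighted} $H^1$-seminorm requires the weighted approximation theory of \cite{et84} and the $H^2$-regularity of $T$; the delicate point is confirming that the degenerate weight $x^{N-1}$ near the origin does not destroy the first-order $H^1$ interpolation estimate.
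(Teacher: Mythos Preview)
Your argument is correct. The paper gives no proof of its own here, deferring to \cite{bo90} with the remark that the argument carries over verbatim; the standard route there is precisely your splitting $(T-T_h)+(T_h-\hat T_h)$, with the first piece controlled by the Ritz projection and the weighted $H^1$ interpolation estimate (available in \cite{et84,jes78}) together with Lemma~\ref{lem:regularity}, and the second by the quadrature bound you derive from Lemma~\ref{prop:interval}.
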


\begin{rem}
Lemma \ref{lem:conv_op} does not exactly \tn{agree} with \tn{Lemma~3.3 in \cite{bo90}}, \tn{but its proof is identical to} that of \tn{Lemma~3.3 in \cite{bo90}}. 
\end{rem}

\tn{The resolvent operator $R_{z}(\hat{T}_{h})$ for $z\in \mathbb{C}$ is defined as} 
\[
R_{z}(\hat{T}_{h})=(zI-\hat{T}_{h})^{-1}:S_{h}\to S_{h},
\]
where $\hat{T}_{h}:S_{h}\to S_{h}$ and $z\in \rho(\hat{T}_{h})$. 
\tn{Let} $I$ \tn{be} the identity operator and $\rho(B)$ \tn{be} the resolvent set of a linear operator $B$.

\begin{lemma}[{\cite[Lemma 1]{dnr78}}]
\label{lem:resolvent}
For any closed set $F\subset\rho(T)$, there exists $h_{0}>0$ such that for any $h\le h_{0}$, $R_{z}(\hat{T}_{h})$ exists and 
\[
\|R_{z}(\hat{T}_{h})\|_{1,S_{h}}\le C\quad (\forall z\in F),
\]
where $C$ is independent of $h$. 
\end{lemma}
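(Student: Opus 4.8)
The plan is to recognize this as the classical perturbation lemma underlying the Descloux--Nassif--Rappaz spectral framework, and to exploit that $\|\cdot\|_{1,\dot{H}^{1}}$ is exactly the operator norm induced by the energy seminorm $\|v'\|=A(v,v)^{1/2}$ (recall $A(v,v)=\|v_x\|^{2}$, which is a genuine norm on $\dot{H}^{1}$ by Poincar\'e). First I would observe that $T$ is self-adjoint with respect to $A(\cdot,\cdot)$, since $A(Tv,\chi)=(v,\chi)=(\chi,v)=A(T\chi,v)$, and that $T$ is compact: Lemma~\ref{lem:regularity} shows $T$ maps into $H^{2}(I)\cap\dot{H}^{1}$ boundedly, and the embedding $H^{2}\hookrightarrow H^{1}$ is compact. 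Hence $\sigma(T)$ is a compact subset of $\mathbb{R}$ (a null sequence of eigenvalues together with $0$), and for a normal operator one has $\|R_{z}(T)\|_{1,\dot{H}^{1}}=\dist(z,\sigma(T))^{-1}$. Because $F$ is closed and disjoint from the \emph{compact} set $\sigma(T)$, we get $\dist(F,\sigma(T))>0$, so that
\[
M:=\sup_{z\in F}\|R_{z}(T)\|_{1,\dot{H}^{1}}<\infty .
\]

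The heart of the argument is a purely algebraic identity on $S_{h}$. For every $w_{h}\in S_{h}$ and $z\in F$, splitting $zI-\hat{T}_{h}=(zI-T)+(T-\hat{T}_{h})$ on $S_{h}\subset\dot{H}^{1}$ and applying $R_{z}(T)=(zI-T)^{-1}$ yields
\[
R_{z}(T)(zI-\hat{T}_{h})w_{h}=w_{h}+R_{z}(T)(T-\hat{T}_{h})w_{h}.
\]
Every term here lives in $\dot{H}^{1}$, so the continuous resolvent may legitimately be applied even though $\hat{T}_{h}$ is defined only on $S_{h}$; this domain bookkeeping is the point that requires care. I would then estimate the perturbation term by Lemma~\ref{lem:conv_op},
\[
\|(R_{z}(T)(T-\hat{T}_{h})w_{h})'\|\le M\,\|T-\hat{T}_{h}\|_{1,S_{h}}\,\|w_{h}'\|,
\]
and fix $h_{0}$ so small that $M\,\|T-\hat{T}_{h}\|_{1,S_{h}}\le \tfrac12$ for all $h\le h_{0}$; crucially this choice is \emph{uniform in} $z$, since $M$ is a single scalar. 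Feeding this into the identity and using the reverse triangle inequality gives $\|(R_{z}(T)(zI-\hat{T}_{h})w_{h})'\|\ge\tfrac12\|w_{h}'\|$, while the left-hand side is bounded above by $M\,\|((zI-\hat{T}_{h})w_{h})'\|$, so that
\[
\|w_{h}'\|\le 2M\,\|((zI-\hat{T}_{h})w_{h})'\|\qquad(w_{h}\in S_{h},\ z\in F,\ h\le h_{0}).
\]

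This injectivity estimate finishes the proof. Since $\dim S_{h}<\infty$, injectivity of $zI-\hat{T}_{h}$ on $S_{h}$ forces bijectivity, so $R_{z}(\hat{T}_{h})$ exists for all $z\in F$ and $h\le h_{0}$; setting $w_{h}=R_{z}(\hat{T}_{h})v_{h}$ then reads off $\|(R_{z}(\hat{T}_{h})v_{h})'\|\le 2M\,\|v_{h}'\|$, i.e.\ $\|R_{z}(\hat{T}_{h})\|_{1,S_{h}}\le 2M=:C$, uniformly in $z\in F$ and $h\le h_{0}$. I expect the main obstacle to be precisely the two uniformity points noted above: verifying finiteness of $M$ from the closedness of $F$ against the compactness of $\sigma(T)$, and ensuring the threshold $h_{0}$ can be taken independent of $z$ (which it can, because the only $z$-dependence enters through the single bound $M$). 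The passage from injectivity to invertibility is then immediate by finite-dimensionality, so no discrete inf--sup condition must be established separately.
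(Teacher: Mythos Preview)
The paper does not supply its own proof of this lemma; it is quoted verbatim as \cite[Lemma~1]{dnr78} and used as a black box. Your proposal correctly reconstructs the standard Descloux--Nassif--Rappaz perturbation argument: self-adjointness and compactness of $T$ in $(\dot H^{1},A)$ give a uniform bound $M$ on $\|R_{z}(T)\|_{1,\dot H^{1}}$ over the closed set $F\subset\rho(T)$, and the resolvent identity together with Lemma~\ref{lem:conv_op} and finite-dimensionality of $S_{h}$ yields existence and the uniform bound $2M$ for $R_{z}(\hat T_{h})$.

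One small point of bookkeeping: you invoke Lemma~\ref{lem:regularity} for the compactness of $T$, but as stated that lemma only asserts $H^{2}$-regularity for data in $S_{h}$. What you actually need (and what suffices) is the factorisation $T:\dot H^{1}\hookrightarrow L^{2}_{x^{N-1}}\to\dot H^{1}$, where the first arrow is the compact weighted Rellich embedding and the second is bounded because $\|(Tv)'\|^{2}=A(Tv,Tv)=(v,Tv)\le C\|v\|\,\|(Tv)'\|$. This is routine, but it is worth saying explicitly rather than pointing to Lemma~\ref{lem:regularity}.
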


We \tn{now} define spectral projections of $T$ and $\hat{T}_{h}$.
Let $\Gamma\subset \mathbb{C}$ be a circle centered at $\frac{1}{\mu}$ \tn{enclosing} no other points of $\sigma(T)$ which stands for the spectral set of $T$.
Let $E=E(\frac{1}{\mu}):\dot{H}^{1}\to\dot{H}^{1}$ and $\hat{E}_{h}=\hat{E}_{h}(\frac{1}{\mu}):S_{h}\to S_{h}$ be the spectral projection operators
associated with $T$ and $\hat{T}_{h}$ and the parts of the corresponding spectrum enclosed by $\Gamma$, respectively:
\[
E=\frac{1}{2\pi i}\int_{\Gamma}R_{z}(T)~dz,\quad 
\hat{E}_{h}=\frac{1}{2\pi i}\int_{\Gamma}R_{z}(\hat{T}_{h})~dz.
\]

\begin{rem}
By Lemma \ref{lem:resolvent}, \tn{when $h$ is sufficiently small}, $\Gamma\subset\rho(\hat{T}_{h})$ holds and
$\|R_{z}(\hat{T}_{z})\|_{1,S_{h}}$ is bounded for all $z\in\Gamma$.
Thus the integral of $\hat{E}_{h}$ exists.
\end{rem}

\tn{To} examine the convergence property of $\hat{E}_{h}$\tn{, we use the following lemma.}
\begin{lemma}[\tn{Lemma 2 in \cite{dnr78}}]
$\|E-\hat{E}_{h}\|_{1,S_{h}}\to 0$ as $h\to 0$.
\end{lemma}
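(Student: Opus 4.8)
The plan is to reduce the statement to the operator convergence $\|T-\hat{T}_{h}\|_{1,S_{h}}\to 0$ already established in Lemma \ref{lem:conv_op}, by exploiting the contour-integral representation of the two spectral projections. Subtracting the defining integrals and restricting to $v_{h}\in S_{h}$, I would write
\[
(E-\hat{E}_{h})v_{h}=\frac{1}{2\pi i}\int_{\Gamma}\bigl(R_{z}(T)-R_{z}(\hat{T}_{h})\bigr)v_{h}~dz.
\]
The key algebraic step is the resolvent identity. Since $R_{z}(\hat{T}_{h})v_{h}\in S_{h}$, the operator $T-\hat{T}_{h}$ can be applied to it (with $T$ viewed on $S_{h}\subset\dot{H}^{1}$), and I would use
\[
R_{z}(T)-R_{z}(\hat{T}_{h})=R_{z}(T)\,(T-\hat{T}_{h})\,R_{z}(\hat{T}_{h})\quad\text{on }S_{h},
\]
which one verifies directly from $R_{z}(B)=(zI-B)^{-1}$ using $(zI-\hat{T}_{h})R_{z}(\hat{T}_{h})=I$ and $R_{z}(T)(zI-T)=I$. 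The composition maps $S_{h}\to\dot{H}^{1}$; the bookkeeping point is that the middle factor shifts the ambient space from $S_{h}$ to $\dot{H}^{1}$, so that $R_{z}(T)$ is applied in $\dot{H}^{1}$ while $R_{z}(\hat{T}_{h})$ acts in $S_{h}$.

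Next I would move the $\|\cdot\|_{1}$ estimate inside the integral. Applying the three operators in the order $R_{z}(\hat{T}_{h})$, then $T-\hat{T}_{h}$, then $R_{z}(T)$, and using that $\|\cdot\|_{1}$ is submultiplicative under composition, for each $v_{h}$ I obtain
\[
\|\bigl((E-\hat{E}_{h})v_{h}\bigr)'\|\le\frac{1}{2\pi}\int_{\Gamma}\|R_{z}(T)\|_{1}\,\|T-\hat{T}_{h}\|_{1,S_{h}}\,\|R_{z}(\hat{T}_{h})\|_{1,S_{h}}\,\|v_{h}'\|~|dz|.
\]
For the uniform control of the resolvents on $\Gamma$, I would note that $\Gamma$ is a compact subset of $\rho(T)$ (it encloses only $\tfrac{1}{\mu}$ and avoids $\sigma(T)$), so $z\mapsto R_{z}(T)$ is continuous there and hence $\sup_{z\in\Gamma}\|R_{z}(T)\|_{1}<\infty$; simultaneously Lemma \ref{lem:resolvent} applied with the closed set $F=\Gamma$ gives $\|R_{z}(\hat{T}_{h})\|_{1,S_{h}}\le C$ uniformly in $z\in\Gamma$ for all sufficiently small $h$. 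Dividing by $\|v_{h}'\|$ and taking the supremum over $v_{h}\in S_{h}$, then factoring these bounds out of the integral, yields
\[
\|E-\hat{E}_{h}\|_{1,S_{h}}\le C\,|\Gamma|\,\sup_{z\in\Gamma}\|R_{z}(T)\|_{1}\,\|T-\hat{T}_{h}\|_{1,S_{h}}.
\]
Lemma \ref{lem:conv_op} then forces the right-hand side to $0$ as $h\to 0$.

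I expect the main obstacle to be the careful handling of the mixed domains rather than any hard estimate: one must confirm that the resolvent identity is legitimate when the three factors live on $S_{h}$, $S_{h}\to\dot{H}^{1}$, and $\dot{H}^{1}$ respectively, and that the uniform bound of Lemma \ref{lem:resolvent} (which requires $h\le h_{0}$) applies simultaneously for every $z$ on the whole contour $\Gamma$. Once the contour is confirmed to sit in a closed subset of $\rho(T)$ and the resolvents are uniformly bounded there, the remainder is a routine integral estimate driven entirely by $\|T-\hat{T}_{h}\|_{1,S_{h}}\to 0$.
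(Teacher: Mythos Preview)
Your proposal is correct and is precisely the standard contour-integral argument: subtract the spectral projections, apply the second resolvent identity $R_{z}(T)-R_{z}(\hat{T}_{h})=R_{z}(T)(T-\hat{T}_{h})R_{z}(\hat{T}_{h})$ on $S_{h}$, and combine the uniform resolvent bounds on $\Gamma$ (compactness of $\Gamma\subset\rho(T)$ for $R_{z}(T)$, Lemma~\ref{lem:resolvent} for $R_{z}(\hat{T}_{h})$) with Lemma~\ref{lem:conv_op}. The paper does not give its own proof of this lemma at all; it simply cites \cite[Lemma~2]{dnr78}, and what you have written is exactly the argument found there, so there is nothing further to compare.
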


We use the following symbols. 
\begin{itemize}
\item $\dist(w,A)=\dps\inf_{y\in A}\|w'-y'\|$\quad ($w\in\dot{H}^{1}$, $A\subset \dot{H}^{1}$), 
\item $\delta(\hat{E}_{h}(S_{h}),E(\dot{H}^{1}))=\dps\sup_{v_{h}\in\hat{E}_{h}(S_{h}),~\|v'_{h}\|=1}\dist(v_{h},E(\dot{H}^{1}))$, 
\item $\delta(E(\dot{H}^{1}),\hat{E}_{h}(S_{h}))=\dps\sup_{v\in E(\dot{H}^{1}),~\|v'\|=1}\dist(v,\hat{E}_{h}(S_{h}))$, 
\item $\hat{\delta}(E(\dot{H}^{1}),\hat{E}_{h}(S_{h}))=\max\{{\delta(\hat{E}_{h}(S_{h}),E(\dot{H}^{1})),~\delta(E(\dot{H}^{1}),\hat{E}_{h}(S_{h}))}\}$. 
\end{itemize}

The \tn{next result} follows from the property of \tn{the} spectral projection operator.  

\begin{cor}
\label{cor:gap1}
$\delta(\hat{E}_{h}(S_{h}),E(\dot{H}^{1}))\to 0$ as $h\to 0$.
\end{cor}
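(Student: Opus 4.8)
The plan is to read off this gap estimate directly from the operator-norm convergence $\|E-\hat{E}_{h}\|_{1,S_{h}}\to 0$ established in the lemma immediately preceding the list of symbols (Lemma 2 in \cite{dnr78}); no additional machinery is needed. First I would fix an arbitrary $v_{h}\in\hat{E}_{h}(S_{h})$ with $\|v_{h}'\|=1$. Because $\hat{E}_{h}$ is a (spectral) projection, every element of its range is fixed by it, so $v_{h}=\hat{E}_{h}v_{h}$. The key observation is that $Ev_{h}$ lies in $E(\dot{H}^{1})$ and is therefore an admissible competitor in the infimum defining $\dist(v_{h},E(\dot{H}^{1}))$.

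Using this competitor, I would estimate
\[
\dist(v_{h},E(\dot{H}^{1}))\le \|v_{h}'-(Ev_{h})'\|
=\|((\hat{E}_{h}-E)v_{h})'\|
\le \|E-\hat{E}_{h}\|_{1,S_{h}}\,\|v_{h}'\|
=\|E-\hat{E}_{h}\|_{1,S_{h}},
\]
where the middle equality uses $v_{h}=\hat{E}_{h}v_{h}$, and the last inequality is just the definition of $\|\cdot\|_{1,S_{h}}$ together with the normalization $\|v_{h}'\|=1$. Since the right-hand side is independent of $v_{h}$, taking the supremum over all such $v_{h}$ yields
\[
\delta(\hat{E}_{h}(S_{h}),E(\dot{H}^{1}))\le \|E-\hat{E}_{h}\|_{1,S_{h}},
\]
and the claim follows by letting $h\to 0$ and invoking the preceding lemma.

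There is essentially no real obstacle here; the argument is a one-line consequence of the projection identity and the operator-norm bound. The only two points I would be careful to record are, first, that $\|v'\|$ is a genuine norm on $\dot{H}^{1}$ (by the Poincar\'e inequality, since $v(1)=0$), so the normalization $\|v_{h}'\|=1$ is legitimate and the quantity $\|\cdot\|_{1,S_{h}}$ indeed controls the seminorm distance $\dist$; and second, that idempotency $v_{h}=\hat{E}_{h}v_{h}$ is precisely what allows the difference $v_{h}-Ev_{h}$ to be rewritten as $(\hat{E}_{h}-E)v_{h}$, matching the operator to which the convergence lemma applies.
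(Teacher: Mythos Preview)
Your argument is correct and is exactly what the paper has in mind: it states the corollary without proof, noting only that it ``follows from the property of the spectral projection operator,'' which is precisely the idempotency $v_{h}=\hat{E}_{h}v_{h}$ combined with the preceding lemma $\|E-\hat{E}_{h}\|_{1,S_{h}}\to 0$ that you use.
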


\begin{lemma}[\tn{Theorem 2 in \cite{dnr78}}]
\label{lem:approximate_func}
\[
\lim_{h\to0}\inf_{\chi_{h}\in S_{h}}\|(u-\chi_{h})'\|=0\quad (u\in \dot{H}^{1}).
\] 
\end{lemma}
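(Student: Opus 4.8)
The plan is to reduce this weighted approximation statement to the familiar unweighted one, and then combine density of smooth functions with a standard piecewise-linear interpolation estimate. The key observation is that the weight is harmless from above: since $0\le x^{N-1}\le 1$ on $\bar I=[0,1]$ for $N\ge 2$, every $v\in\dot{H}^1$ satisfies
\[
\|v'\|=\left(\int_I x^{N-1}|v'|^2\,dx\right)^{1/2}\le \|v'\|_{L^2(I)}.
\]
Hence it suffices to show $\inf_{\chi_h\in S_h}\|(u-\chi_h)'\|_{L^2(I)}\to 0$ in the \emph{unweighted} $H^1$-seminorm; the degeneracy of $x^{N-1}$ at the origin only makes the weighted quantity smaller and therefore cannot obstruct the limit. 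Note also that $u\in\dot{H}^1\subset H^1(I)$, so $u$ is a genuine (unweighted) $H^1$ function.

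Fix $u\in\dot{H}^1$ and $\varepsilon>0$. First I would approximate $u$ by a smooth function respecting the boundary condition: since the functions in $C^2(\bar I)$ with $w(1)=0$ are dense in $\dot{H}^1$ for the $H^1$-norm (any smooth $H^1$-approximant $w_n$ of $u$ may be corrected to $w_n-w_n(1)$, which leaves the derivative unchanged while enforcing the constraint, using that $w_n(1)\to u(1)=0$), I may choose such a $w$ with $\|(u-w)'\|_{L^2(I)}<\varepsilon$. Because $w(1)=0$ matches the constraint defining $S_h$, the Lagrange interpolant $\Pi_h w=\sum_{j=0}^m w(x_j)\phi_j$ lies in $S_h$, its value at $x_m=1$ being $w(1)=0$. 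The standard $\mathcal P_1$ interpolation estimate on each subinterval then gives $\|(w-\Pi_h w)'\|_{L^2(I)}\le Ch\|w''\|_{L^2(I)}$, with $C$ independent of $h$ (this is the one-dimensional, weight-free analogue of the estimates underlying Lemmas \ref{prop:tj} and \ref{prop:interval}). Combining via the triangle inequality and the weight bound above,
\[
\inf_{\chi_h\in S_h}\|(u-\chi_h)'\|\le\|(u-\Pi_h w)'\|_{L^2(I)}\le\|(u-w)'\|_{L^2(I)}+\|(w-\Pi_h w)'\|_{L^2(I)}<\varepsilon+Ch\|w''\|_{L^2(I)}.
\]
Letting $h\to0$ with $w$ (and hence $\|w''\|_{L^2(I)}$) fixed yields $\limsup_{h\to0}\inf_{\chi_h}\|(u-\chi_h)'\|\le\varepsilon$, and since $\varepsilon$ was arbitrary the claim follows.

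I do not expect a genuine obstacle here, as the statement is a routine density (approximability) property of the finite element spaces. The only point demanding a little care is the singular weight near $x=0$, but as noted it enters only through $x^{N-1}\le1$ and thus works in our favour, so that no weighted interpolation estimate and no special control near the origin are needed. Alternatively, the argument can be phrased through the $A$-projection $P_A$ of \eqref{eq:pA}: since $P_Au$ is the best approximation in the energy seminorm, $\inf_{\chi_h\in S_h}\|(u-\chi_h)'\|=\|(u-P_Au)'\|$, and the displayed chain bounds this quantity directly, recovering the same conclusion.
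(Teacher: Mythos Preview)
Your argument is correct. The paper does not supply its own proof of this lemma; it simply cites the result from \cite{dnr78}, where it appears as an abstract approximability hypothesis verified in a general spectral-approximation framework. Your approach is more direct and tailored to the situation at hand: you exploit the specific facts that $\dot{H}^1\subset H^1(I)$ by definition and that the weight satisfies $x^{N-1}\le 1$ on $\bar I$, thereby reducing the weighted statement to the standard unweighted density-plus-interpolation argument. This is more elementary than invoking the general machinery of \cite{dnr78}, and makes transparent why the degeneracy of the weight at the origin causes no difficulty here. The closing remark identifying $\inf_{\chi_h}\|(u-\chi_h)'\|$ with $\|(u-P_Au)'\|$ via the $A$-projection is also correct and a nice way to connect the lemma to the rest of the paper.
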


\begin{cor}[\tn{Theorem 3 in \cite{dnr78}}]
$\delta(E(\dot{H}^{1}),\hat{E}_{h}(S_{h}))\to 0$ as $h\to 0$.
\end{cor}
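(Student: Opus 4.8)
The plan is to approximate an arbitrary unit vector $v\in E(\dot{H}^{1})$ by the image $\hat{E}_{h}\chi_{h}$ of a good $S_{h}$-approximant $\chi_{h}$ of $v$, and to bound the resulting error using the already-established operator convergence $\|E-\hat{E}_{h}\|_{1,S_{h}}\to0$ together with the approximation property of $S_{h}$ in Lemma \ref{lem:approximate_func}. This is the dual counterpart of Corollary \ref{cor:gap1}, which bounds the gap in the opposite direction.

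First I would fix $v\in E(\dot{H}^{1})$ with $\|v'\|=1$; since $E$ is a projection, $v=Ev$. By Lemma \ref{lem:approximate_func} I can choose $\chi_{h}\in S_{h}$ with $\|(v-\chi_{h})'\|\to0$ as $h\to0$. Because $\chi_{h}\in S_{h}$, both $E\chi_{h}$ and $\hat{E}_{h}\chi_{h}$ are defined, and the error splits as
\[
v-\hat{E}_{h}\chi_{h}=E(v-\chi_{h})+(E-\hat{E}_{h})\chi_{h}.
\]
Noting that $\|w'\|^{2}=A(w,w)$, that $T$ is $A$-self-adjoint, and that $E$ is the $A$-orthogonal spectral projection, one has $\|(E(v-\chi_{h}))'\|\le\|(v-\chi_{h})'\|$. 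The second term obeys $\|((E-\hat{E}_{h})\chi_{h})'\|\le\|E-\hat{E}_{h}\|_{1,S_{h}}\,\|\chi_{h}'\|$, and $\|\chi_{h}'\|\le\|(v-\chi_{h})'\|+1$ stays bounded. Since $\hat{E}_{h}\chi_{h}\in\hat{E}_{h}(S_{h})$, this yields
\[
\dist(v,\hat{E}_{h}(S_{h}))\le\|(v-\hat{E}_{h}\chi_{h})'\|\le\|(v-\chi_{h})'\|+\|E-\hat{E}_{h}\|_{1,S_{h}}\bigl(\|(v-\chi_{h})'\|+1\bigr),
\]
whose right-hand side tends to $0$.

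The point requiring care is the uniformity over the unit sphere of $E(\dot{H}^{1})$, since $\delta(E(\dot{H}^{1}),\hat{E}_{h}(S_{h}))$ is a supremum over all such $v$ and the approximant $\chi_{h}$ depends on $v$. Here I would exploit the finite-dimensionality of $E(\dot{H}^{1})$: fixing a basis $\{e_{1},\dots,e_{k}\}$ and $S_{h}$-approximants $\chi_{h}^{(i)}$ with $\|(e_{i}-\chi_{h}^{(i)})'\|\to0$, a general unit vector $v=\sum_{i}c_{i}e_{i}$ has coefficients bounded by equivalence of norms, so $\chi_{h}=\sum_{i}c_{i}\chi_{h}^{(i)}$ approximates $v$ uniformly. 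In the present application $\mu$ is simple and $E(\dot{H}^{1})$ is one-dimensional, so this uniformity is automatic. The main obstacle is thus not the individual estimate but securing this uniform approximation while keeping $\|\chi_{h}'\|$ bounded; once finite-dimensionality supplies it, taking the supremum in the displayed bound gives $\delta(E(\dot{H}^{1}),\hat{E}_{h}(S_{h}))\to0$.
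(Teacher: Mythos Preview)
The paper does not supply its own proof of this corollary; it merely cites it as Theorem~3 in \cite{dnr78}. Your argument is correct and is essentially the standard proof one finds in that reference: approximate a unit vector $v=Ev\in E(\dot{H}^{1})$ by some $\chi_{h}\in S_{h}$ via Lemma~\ref{lem:approximate_func}, split $v-\hat{E}_{h}\chi_{h}=E(v-\chi_{h})+(E-\hat{E}_{h})\chi_{h}$, control the first piece by the $A$-orthogonality of $E$ (which follows from the $A$-self-adjointness of $T$) and the second by the already-proved $\|E-\hat{E}_{h}\|_{1,S_{h}}\to0$, then use finite-dimensionality of $E(\dot{H}^{1})$ to pass from pointwise to uniform convergence over the unit sphere. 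Your remark that in the present setting $\dim E(\dot{H}^{1})=1$ makes the uniformity step trivial is also to the point and matches how the paper uses this result downstream.
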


\begin{lemma}[\tn{Corollary 2.6 in \cite{kat95}}]
If $\hat{\delta}(E(\dot{H}^{1}),\hat{E}_{h}(S_{h}))<1$, then
$\dim E(\dot{H}^{1}) = \dim \hat{E}_{h}(S_{h})$. 
\end{lemma}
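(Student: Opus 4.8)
The plan is to regard $\dot{H}^{1}$ as a Hilbert space under the inner product $A(\cdot,\cdot)$, whose induced norm is precisely $A(v,v)^{1/2}=\|v'\|$, the norm appearing in the definitions of $\dist$ and of the gaps $\delta$. Writing $M=E(\dot{H}^{1})$ and $N=\hat{E}_{h}(S_{h})$, both are finite-dimensional (hence closed) subspaces, being the ranges of the spectral projections. The key observation is that in this Hilbert-space setting the distance to a closed subspace is realized by the $A$-orthogonal projection: if $P_{N}$ denotes the $A$-orthogonal projection onto $N$, then $\dist(w,N)=\|(w-P_{N}w)'\|$ for every $w\in\dot{H}^{1}$.

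First I would establish the one-sided implication that $\delta(M,N)<1$ forces $\dim M\le\dim N$. Arguing by contraposition, suppose $\dim M>\dim N$. Then the linear map $P_{N}|_{M}:M\to N$ cannot be injective by rank--nullity, so there is $u\in M$ with $\|u'\|=1$ and $P_{N}u=0$, i.e.\ $A(u,y)=0$ for all $y\in N$. By the Pythagorean identity $\|(u-y)'\|^{2}=\|u'\|^{2}+\|y'\|^{2}\ge 1$ for every $y\in N$, whence $\dist(u,N)\ge 1$ and therefore $\delta(M,N)\ge 1$, a contradiction.

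Next I would invoke the same argument with the roles of $M$ and $N$ exchanged, obtaining that $\delta(N,M)<1$ forces $\dim N\le\dim M$. Since the hypothesis $\hat{\delta}(E(\dot{H}^{1}),\hat{E}_{h}(S_{h}))<1$ means precisely that both one-sided gaps $\delta(M,N)$ and $\delta(N,M)$ are strictly less than $1$, combining the two inequalities yields $\dim M=\dim N$, which is the assertion.

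The only delicate point—and the main potential obstacle—is the reduction to orthogonal projections together with the finiteness of the dimensions. Because $M$ and $N$ are ranges of spectral projections associated with an isolated part of the spectrum of operators with compact resolvent (see Lemma \ref{lem:regularity}), they are finite-dimensional, so rank--nullity applies and the kernel of $P_{N}|_{M}$ is genuinely nontrivial whenever $\dim M>\dim N$. In a purely infinite-dimensional setting the corresponding conclusion would demand a more careful cardinality argument, but that situation does not occur here.
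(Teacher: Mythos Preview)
Your argument is correct. The paper does not actually give a proof of this lemma; it simply cites Kato's \emph{Perturbation Theory for Linear Operators}, where the result is stated in the general Banach-space setting and the proof proceeds through properties of pairs of projections rather than orthogonal complements. Your approach is a genuine simplification made possible by the Hilbert-space structure: because $A(\cdot,\cdot)$ is an inner product on $\dot{H}^{1}$ (positive-definiteness following from $v(1)=0$, completeness from the Poincar\'{e} inequality), you get orthogonal projections for free, and the rank--nullity step replaces Kato's more delicate isomorphism construction. The trade-off is that your argument uses finite-dimensionality of both subspaces explicitly, while Kato's version applies to arbitrary closed subspaces; in the present context that generality is not needed, and your direct proof is both shorter and more transparent.
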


\tn{For} sufficiently small $h$, we \tn{observe that} $\hat{\delta}(E(\dot{H}^{1}),\hat{E}_{h}(S_{h}))<1$\tn{; that is,} $\dim E(\dot{H}^{1}) = \dim \hat{E}_{h}(S_{h})$. 

Therefore, $\dim E(\dot{H}^{1})=1$, because $E(\dot{H}^{1})$ is the \tn{eigenspace of} the smallest eigenvalue of \eqref{eq:evp}. Then, the unique eigenvalue of $\hat{T}_{h}$ \tn{(denoted by $\frac{1}{\hat{\nu}_{h}}$)} is located inside $\Gamma$.
Then, there exists $\hat{\xi}_{h}(\neq0)\in S_{h}$ such that
\[
A(\hat{\xi}_{h},\chi_{h})=\hat{\nu}_{h}\dual{\hat{\xi}_{h},\chi_{h}},~\chi_{h}\in S_{h}.
\]

\begin{cor}
\label{conv:eigenvalue}
${\hat{\nu}}_{h}\to \mu$ as $h\to 0$. 
\end{cor}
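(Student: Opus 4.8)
The plan is to exploit the reciprocal correspondence between the eigenvalues of the operators $T$, $\hat{T}_{h}$ and those of the variational eigenvalue problems \eqref{eq:evp} and \eqref{eigen_h}, and then to trap the discrete eigenvalue inside an arbitrarily small contour around $1/\mu$. First I would record that if $A(\psi,\chi)=\mu(\psi,\chi)$ for all $\chi$, then $A(T\psi,\chi)=(\psi,\chi)=\mu^{-1}A(\psi,\chi)$, so $T\psi=\mu^{-1}\psi$; thus $1/\mu$ is an eigenvalue of $T$, and since $\mu$ is the smallest eigenvalue of \eqref{eq:evp}, $1/\mu$ is the largest eigenvalue of $T$ and $\dim E(\dot{H}^{1})=1$. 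The identical computation shows $\hat{T}_{h}\hat{\xi}_{h}=\hat{\nu}_{h}^{-1}\hat{\xi}_{h}$, so that $1/\hat{\nu}_{h}$ is precisely the eigenvalue of $\hat{T}_{h}$ lying inside $\Gamma$ produced by the spectral-projection argument above.

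Next I would carry out a contour-shrinking argument. Fix $\varepsilon>0$ small enough that the circle $\Gamma_{\varepsilon}$ of radius $\varepsilon$ centred at $1/\mu$ lies inside $\Gamma$ and encloses no point of $\sigma(T)$ other than $1/\mu$. Applying Lemma \ref{lem:resolvent} with $F=\Gamma_{\varepsilon}$, together with the convergence $\|E-\hat{E}_{h}\|_{1,S_{h}}\to0$ and the gap estimates culminating in $\hat{\delta}(E(\dot{H}^{1}),\hat{E}_{h}(S_{h}))<1$, yields a threshold $h_{0}=h_{0}(\varepsilon)$ such that for all $h\le h_{0}$ the operator $\hat{T}_{h}$ has exactly one eigenvalue inside $\Gamma_{\varepsilon}$, since $\dim\hat{E}_{h}(S_{h})=\dim E(\dot{H}^{1})=1$. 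Because $\Gamma_{\varepsilon}$ lies inside $\Gamma$ and $1/\hat{\nu}_{h}$ is the only eigenvalue inside $\Gamma$, that eigenvalue must be $1/\hat{\nu}_{h}$, whence $|1/\hat{\nu}_{h}-1/\mu|<\varepsilon$ for all $h\le h_{0}$. Since $\varepsilon$ is arbitrary, $1/\hat{\nu}_{h}\to1/\mu$, and rearranging gives $\hat{\nu}_{h}\to\mu$.

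The main obstacle is verifying that the threshold $h_{0}$ can indeed be chosen for each radius $\varepsilon$, that is, that the resolvent bound of Lemma \ref{lem:resolvent} and the projection-gap estimates apply to the shrunken contour $\Gamma_{\varepsilon}$. This is legitimate because for each fixed $\varepsilon$ the set $\Gamma_{\varepsilon}$ is a fixed compact subset of $\rho(T)$, so the cited results apply verbatim with an $\varepsilon$-dependent threshold; no uniformity in $\varepsilon$ is required, as $\varepsilon$ is frozen before letting $h\to0$. A secondary point to check is that $1/\hat{\nu}_{h}$ stays bounded away from $0$, so that the passage from $1/\hat{\nu}_{h}\to1/\mu$ to $\hat{\nu}_{h}\to\mu$ is valid; this is immediate, since $1/\hat{\nu}_{h}$ lies within $\varepsilon$ of the strictly positive number $1/\mu$.
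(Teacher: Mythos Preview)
Your proposal is correct and follows essentially the same route as the paper: you shrink the contour to $\Gamma_{\varepsilon}$, reapply the spectral-projection machinery to trap the unique eigenvalue $1/\hat{\nu}_{h}$ within $\varepsilon$ of $1/\mu$, and then invert using $\mu>0$. The paper's proof is more terse (it simply invokes ``as stated above'' to rerun the argument with $\Gamma_{\varepsilon}$), but the logic is identical.
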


\begin{proof}
\tn{For some arbitrary} $\epsilon>0$, we set $\Gamma_{\epsilon}=B_{\frac{1}{\mu}}(\epsilon)=\{z\in\mathbb{C} \mid  |z-\frac{1}{\mu}|=\epsilon\}$. 
As \tn{stated} above, there exists $h_{\epsilon}>0$ such that \tn{the} eigenvalue $\frac{1}{{\hat{\nu}}_{h}}$ of $\hat{T}_{h}$ is inside $\Gamma_{\epsilon}$ for all $h<h_{\epsilon}$.
Therefore\tn{,} 
\[
\left|\frac{1}{\hat{\nu}_{h}}-\frac{1}{\mu}\right|\le\epsilon.
\]
Because $\mu$ is positive, the proof \tn{is complete}. 
\end{proof}

\begin{rem}
\tn{Similarly}, we \tn{find that a} unique eigenvalue $\frac{1}{\nu_{h}}$ of $T_{h}$ \tn{exists} inside $\Gamma$, and \tn{that} $\nu_h\to \mu$ as $h \to 0$. 
\end{rem}

Let $\mu_{h}>0$ be the smallest eigenvalue of 
\begin{equation}
\label{eigen:app_h}
A(\psi_{h},\chi_{h})=\mu_{h}(\psi_{h},\chi_{h}),~\chi\in S_{h},
\end{equation}
where~$\psi_{h}\in S_{h}$.

\begin{lemma}
For sufficiently small $h>0$, we have $\nu_{h}=\mu_{h}$. 
In particular, $\mu_{h}\to \mu$ as $h\to 0$.
\end{lemma}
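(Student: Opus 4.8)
The plan is to recognize \eqref{eigen:app_h} as the generalized eigenvalue problem attached to the operator $T_h$, and then to pin down its smallest eigenvalue $\mu_h$ by sandwiching the corresponding (largest) eigenvalue of $T_h$ between $\frac1{\nu_h}$ and $\frac1\mu$.

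First I would record the equivalence. If $A(\psi_h,\chi_h)=\mu_h(\psi_h,\chi_h)$ for all $\chi_h\in S_h$, then by the definition of $T_h$ this reads $A(\psi_h,\chi_h)=\mu_h A(T_h\psi_h,\chi_h)$, so $T_h\psi_h=\frac1{\mu_h}\psi_h$; conversely every eigenpair of $T_h$ arises in this way. Since $A(\cdot,\cdot)$ and $(\cdot,\cdot)$ are symmetric and positive definite, $T_h$ is self-adjoint and positive for the $A$-inner product; hence its eigenvalues are exactly the reciprocals of the eigenvalues of \eqref{eigen:app_h}, and the smallest Galerkin eigenvalue $\mu_h$ corresponds to the \emph{largest} eigenvalue $\frac1{\mu_h}$ of $T_h$.

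Next I would bound this largest eigenvalue from both sides. Because $S_h\subset\dot{H}^1$, the variational characterization
\[
\frac1{\mu_h}=\max_{v_h\in S_h\setminus\{0\}}\frac{(v_h,v_h)}{A(v_h,v_h)}\le \max_{v\in\dot{H}^1\setminus\{0\}}\frac{(v,v)}{A(v,v)}=\frac1\mu
\]
shows that no eigenvalue of $T_h$ exceeds $\frac1\mu$, the largest eigenvalue of $T$. On the other hand $\frac1{\nu_h}$ is one eigenvalue of $T_h$, so $\frac1{\nu_h}\le\frac1{\mu_h}\le\frac1\mu$; since $\nu_h\to\mu$ by the Remark following Corollary \ref{conv:eigenvalue}, a squeeze gives $\frac1{\mu_h}\to\frac1\mu$, i.e.\ $\mu_h\to\mu$.

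Finally I would invoke uniqueness. As $\Gamma$ is a fixed circle about $\frac1\mu$ and $\frac1{\mu_h}\to\frac1\mu$, the largest eigenvalue $\frac1{\mu_h}$ of $T_h$ lies inside $\Gamma$ once $h$ is small enough; but the same Remark asserts that $T_h$ has exactly one eigenvalue inside $\Gamma$, namely $\frac1{\nu_h}$. Hence $\frac1{\mu_h}=\frac1{\nu_h}$, that is $\nu_h=\mu_h$, and $\mu_h\to\mu$ follows (equivalently, from $\nu_h=\mu_h$ together with $\nu_h\to\mu$). I expect the only delicate point to be ensuring that the eigenvalue of $T_h$ trapped inside $\Gamma$ is genuinely the top of the spectrum rather than an interior eigenvalue; this is precisely what the conforming a priori bound $\frac1{\mu_h}\le\frac1\mu$, together with $\frac1{\nu_h}\le\frac1{\mu_h}$, guarantees.
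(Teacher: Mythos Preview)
Your proposal is correct and follows essentially the same route as the paper: both arguments rest on the conforming bound $\mu_h\ge\mu$ obtained from the variational characterization and $S_h\subset\dot H^1$, together with the fact that $T_h$ has a unique eigenvalue inside $\Gamma$. The only cosmetic difference is the order---you first squeeze $\tfrac1{\nu_h}\le\tfrac1{\mu_h}\le\tfrac1\mu$ to get $\mu_h\to\mu$ and then deduce $\nu_h=\mu_h$, whereas the paper concludes $\nu_h=\mu_h$ directly from the bound and uniqueness and reads off convergence from $\nu_h\to\mu$.
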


\begin{proof}
We know \tn{that} $\dim E(\dot{H}^{1})=1$. By variational characterization, \tn{we obtain}
\[
\mu=\inf_{v\in\dot{H}^{1},v\neq0}\frac{\|v'\|^{2}}{\|v\|^{2}}\le \inf_{v_{h}\in S_{h},v_{h}\neq0}\frac{\|v_{h}'\|^{2}}{\|v_{h}\|^{2}}.
\]
\tn{Here} $\mu_{h}$ \tn{is} the smallest eigenvalue of (\ref{eigen:app_h}), that is,
\[
\mu_{h}=\inf_{v_{h}\in S_{h},v_{h}\neq0}\frac{\|v_{h}'\|^{2}}{\|v_{h}\|^{2}}.
\]
\tn{All} eigenvalues of (\ref{eigen:app_h}) are greater than or equal to~$\mu$.
\tn{As the} eigenvalue of~$T_{h}$~enclosed by~$\Gamma$ \tn{is unique},~we obtain~$\nu_{h}=\mu_{h}$~for sufficiently small $h>0$\tn{,}~and~$\mu_{h}\to\mu$ as $h\to 0$.
\end{proof}

\tn{We now state} the following proof. 

\begin{proof}[Proof of \tn{Proposition} \ref{conv:eigenvalue-main} \textup{(i)}]
By variational characterization, we get
\begin{align*}
\hat{\mu}_{h}&=\inf_{v_{h}\in S_{h},v_{h}\neq0}\frac{\|v'_{h}\|^{2}}{\vnorm{v_{h}}^{2}}=\left(\sup_{v_{h}\in S_{h},v_{h}\neq0}\frac{A(\hat{T}_{h}v_{h},v_{h})}{\|v'_{h}\|^{2}}\right)^{-1},\\
\mu_{h}&=\inf_{v_{h}\in S_{h},v_{h}\neq0}\frac{\|v'_{h}\|^{2}}{\|v_{h}\|^{2}}=\left(\sup_{v_{h}\in S_{h},v_{h}\neq0}\frac{A(T_{h}v_{h},v_{h})}{\|v'_{h}\|^{2}}\right)^{-1}.
\end{align*}
Then, 
\begin{align*}
\sup_{v_{h}\in S_{h},v_{h}\neq0}\frac{A(\hat{T}_{h}v_{h},v_{h})}{\|v'_{h}\|^{2}}&=\sup_{v_{h}\in S_{h},v_{h}\neq0}\left(\frac{A(T_{h}v_{h},v_{h})}{\|v'_{h}\|^{2}}+\frac{A((\hat{T}_{h}-T_{h})v_{h},v_{h})}{\|v'_{h}\|^{2}}\right)\\
&\le \sup_{v_{h}\in S_{h},v_{h}\neq0}\frac{A(T_{h}v_{h},v_{h})}{\|v'_{h}\|^{2}} +\|\hat{T}_{h}-T_{h}\|_{1,S_{h}}.
\end{align*}
Similarly,
\[
\sup_{v_{h}\in S_{h},v_{h}\neq0}\frac{A(T_{h}v_{h},v_{h})}{\|v'_{h}\|^{2}}\le\sup_{v_{h}\in S_{h},v_{h}\neq0}\frac{A(\hat{T}_{h}v_{h},v_{h})}{\|v'_{h}\|^{2}} +\|\hat{T}_{h}-T_{h}\|_{1,S_{h}}.
\]
\tn{Applying} Lemma \ref{lem:conv_op}\tn{, we} obtain 
$\hat{\mu}_{h}=\mu$ as $h\to 0$. 
\end{proof}

\begin{rem}
\tn{As the} eigenvalue of $\hat{T}_{h}$ enclosed by $\Gamma$ \tn{is unique}, we conclude \tn{that} $\hat{\nu}_{h}=\hat{\mu}_{h}$ for sufficiently small $h>0$.
\end{rem}

%

\begin{proof}[Proof of \tn{Proposition} \ref{conv:eigenvalue-main} \textup{(ii)}]
We \tn{write} \eqref{eigen_h} \tn{in} matrix form:
\[
\mathcal{A}\bm{\psi}=\hat{\mu}_{h}\mathcal{M}\bm{\psi},
\]
where 
$\mathcal{M}=\operatorname{diag}(\mu_{i})_{0\le i\le m-1}$ and 
$\mathcal{A}=(a_{i,j})_{0\le i,j\le m-1}$ are \tn{defined} as 
$m_i=(1,\phi_i)$ and $a_{i,j}=A(\phi_j,\phi_i)$\tn{, respectively}. 
Moreover, $\bm{\psi}=(\psi_{i})_{0\le i\le m-1}\in\mathbb{R}^{m},~\psi_{i}=\hat{\psi}_{h}(x_{i})$. 
\tn{We thus} express \eqref{eigen_h} as  
\[
\mathcal{M}^{-1}\mathcal{A}\bm{\psi}=\hat{\mu}_{h}\bm{\psi}.
\]
Because $\mathcal{M}$ is a diagonal matrix, the diagonal components $a_{i,i}/\mu_{i,i}$ of $\mathcal{M}^{-1}\mathcal{A}$ are all positive and \tn{the} non-diagonal components are \tn{all} non-positive. 

\tn{Writing}
\begin{equation}
\label{eq:mat-vec}
\left(-\mathcal{M}^{-1}\mathcal{A}+\max_{0\le i\le m-1}\frac{a_{i,i}}{m_{i,i}}\mathcal{I}\right)\bm{\psi}=\left(-\hat{\mu}_{h}+\max_{0\le i\le m-1}\frac{a_{i,i}}{m_{i,i}}\right)\bm{\psi},
\end{equation}
\tn{where} $\mathcal{I}\in\mathbb{R}^{m\times m}$ is the identity matrix, 
then all components of the matrix \tn{on} the left-hand side are non-negative. 
We \tn{now} consider the following eigenvalue problem:
\begin{equation}
\label{eq:new_eigen}
\left(-\mathcal{M}^{-1}\mathcal{A}+\max_{0\le i\le m-1}\frac{a_{i,i}}{m_{i,i}}\mathcal{I}\right)\vec{x}
=\tilde{\mu}_{h}\vec{x}.
\end{equation}
By the Perron--Frobenius theorem, we can take a positive eigenvector for the largest eigenvalue of \eqref{eq:new_eigen}. $-\hat{\mu}_{h}+\dps\max_{0\le i\le m-1}\frac{a_{i,i}}{m_{i,i}}$ is the largest eigenvalue of \eqref{eq:new_eigen}, because $\hat{\mu}_{h}$ is the smallest eigenvalue of \eqref{eigen_h}.

Consequently, the \tn{sign of the} first eigenfunction of \eqref{eigen_h} \tn{is unchanged}.
\end{proof}

\begin{proof}[Proof of \tn{Proposition} \ref{conv:eigenvalue-main} \textup{(iii)}]
We assume \tn{that}
\[
\hat{\psi}_{h}\ge 0\quad \mbox{and}\quad \int_{I}x^{N-1}\hat{\psi}_{h}(x)~dx=1.
\]
 
\tn{Setting} $\phi=\psi/\|\psi'\|$ and $\hat{\phi}_{h}=\hat{\psi}_{h}/\|\hat{\psi}'_{h}\|$,
\tn{applying} Corollary \ref{cor:gap1} and Proposition \ref{conv:eigenvalue-main}~(i), and setting $v_{h}=\hat{\phi}_{h}$, we obtain
\[
\dist(\hat{\phi}_{h},E(\dot{H}^{1}))\to 0 \quad \mbox{as}\quad h\to 0.
\] 
Because $\dim E(\dot{H}^{1})=1$ and $E(\dot{H}^{1})$ is \tn{a} closed subspace in $\dot{H}^{1}$, we find that
\[
E(\dot{H}^{1})=\{z\phi\in\dot{H}^{1} \mid  z\in\mathbb{C} \}
\]
and
\[
\dist(\hat{\phi}_{h},E(\dot{H}^{1}))=\|\hat{\phi}'_{h}-c_{h}\phi'\|,
\]
where $c_{h}\in\mathbb{C}$.

Therefore, $|c_{h}|=1$ as $h\to 0$. 
Using $\hat{\phi}_{h},~\phi\ge 0$ and $\|\hat{\phi}_{h}-c_{h}\phi\|\le\|\hat{\phi}'_{h}-c_{h}\phi'\|$, we \tn{find that} 
$c_{h}=1$ as $h\to 0$.
That is, as $h\to 0$, 
\begin{align*}
\|\hat{\phi}'_{h}-\phi'\|&\le\|\hat{\phi}'_{h}-c_{h}\phi'\| + \|c_{h}\phi'-\phi'\|\\
       &=\|\hat{\phi}'_{h}-c_{h}\phi'\|+|c_{h}-1|\cdot\|\phi'\|\to 0.
\end{align*}

On the other hand, 
\[
 \|(\psi-\hat{\psi}_{h})'\|\le 
\frac{1}{\int_{I}x^{N-1}\phi(x)~dx}\|\phi'-\hat{\phi}'_{h}\|\\
+\left|\frac{1}{\int_{I}x^{N-1}\phi(x)~dx}-\frac{1}{\int_{I}x^{N-1}\hat{\phi}_{h}(x)~dx}\right|.
\]
This, together with 
\begin{multline*}
 \left|\int_{I}x^{N-1}\phi(x)~dx-\int_{I}x^{N-1}\hat{\phi}_{h}(x)~dx\right| \\
\le \left(\frac{1}{N}\right)^{\frac{1}{2}}\cdot\|\phi-\hat{\phi}_{h}\|
\le\left(\frac{1}{N}\right)^{\frac{1}{2}}\cdot\|\phi'-\hat{\phi}'_{h}\|\to 0 \quad \mbox{as}\quad h\to 0
\end{multline*}
implies that $\|(\psi-\hat{\psi}_{h})'\|\to 0$ as $h\to 0$, \tn{which} completes the proof. 
\end{proof}

\paragraph{Acknowledgments.}
This work was supported by JST CREST Grant Number JPMJCR15D1, Japan, and JSPS KAKENHI Grant Number 15H03635, Japan. In addition, the first author was supported by the Program for Leading Graduate Schools, MEXT, Japan.


\end{document}